\numberwithin{equation}{subsection}
\begin{document}
	\newcommand {\emptycomment}[1]{} 
	
	\baselineskip=14pt
	\newcommand{\nc}{\newcommand}
	\newcommand{\delete}[1]{}
	\nc{\mfootnote}[1]{\footnote{#1}} 
	\nc{\todo}[1]{\tred{To do:} #1}
	
	\delete{
		\nc{\mlabel}[1]{\label{#1}}  
		\nc{\mcite}[1]{\cite{#1}}  
		\nc{\mref}[1]{\ref{#1}}  
		\nc{\meqref}[1]{\ref{#1}} 
		\nc{\mbibitem}[1]{\bibitem{#1}} 
	}
	
	\nc{\mlabel}[1]{\label{#1}  
		{\hfill \hspace{1cm}{\bf{{\ }\hfill(#1)}}}}
	\nc{\mcite}[1]{\cite{#1}{{\bf{{\ }(#1)}}}}  
	\nc{\mref}[1]{\ref{#1}{{\bf{{\ }(#1)}}}}  
	\nc{\meqref}[1]{\eqref{#1}{{\bf{{\ }(#1)}}}} 
	\nc{\mbibitem}[1]{\bibitem[\bf #1]{#1}} 
	
	\newtheorem{thm}{Theorem}[section]
	\newtheorem{lem}[thm]{Lemma}
	\newtheorem{cor}[thm]{Corollary}
	\newtheorem{pro}[thm]{Proposition}
	\theoremstyle{definition}
	\newtheorem{defi}[thm]{Definition}
	\newtheorem{ex}[thm]{Example}
	\newtheorem{rmk}[thm]{Remark}
	\newtheorem{pdef}[thm]{Proposition-Definition}
	\newtheorem{condition}[thm]{Condition}
	
	\renewcommand{\labelenumi}{{\rm(\alph{enumi})}}
	\renewcommand{\theenumi}{\alph{enumi}}
	
	\nc{\tred}[1]{\textcolor{red}{#1}}
	\nc{\tblue}[1]{\textcolor{blue}{#1}}
	\nc{\tgreen}[1]{\textcolor{green}{#1}}
	\nc{\tpurple}[1]{\textcolor{purple}{#1}}
	\nc{\btred}[1]{\textcolor{red}{\bf #1}}
	\nc{\btblue}[1]{\textcolor{blue}{\bf #1}}
	\nc{\btgreen}[1]{\textcolor{green}{\bf #1}}
	\nc{\btpurple}[1]{\textcolor{purple}{\bf #1}}
	
	\nc{\ld}[1]{\textcolor{blue}{Landry:#1}}
	\nc{\cm}[1]{\textcolor{red}{Chengming:#1}}
	\nc{\li}[1]{\textcolor{yellow}{#1}}
	\nc{\lir}[1]{\textcolor{blue}{Li:#1}}
	
	
	\nc{\twovec}[2]{\left(\begin{array}{c} #1 \\ #2\end{array} \right )}
	\nc{\threevec}[3]{\left(\begin{array}{c} #1 \\ #2 \\ #3 \end{array}\right )}
	\nc{\twomatrix}[4]{\left(\begin{array}{cc} #1 & #2\\ #3 & #4 \end{array} \right)}
	\nc{\threematrix}[9]{{\left(\begin{matrix} #1 & #2 & #3\\ #4 & #5 & #6 \\ #7 & #8 & #9 \end{matrix} \right)}}
	\nc{\twodet}[4]{\left|\begin{array}{cc} #1 & #2\\ #3 & #4 \end{array} \right|}
	
	\nc{\rk}{\mathrm{r}}
	\newcommand{\g}{\mathfrak g}
	\newcommand{\h}{\mathfrak h}
	\newcommand{\pf}{\noindent{$Proof$.}\ }
	\newcommand{\frkg}{\mathfrak g}
	\newcommand{\frkh}{\mathfrak h}
	\newcommand{\Id}{\rm{Id}}
	\newcommand{\gl}{\mathfrak {gl}}
	\newcommand{\ad}{\mathrm{ad}}
	\newcommand{\add}{\frka\frkd}
	\newcommand{\frka}{\mathfrak a}
	\newcommand{\frkb}{\mathfrak b}
	\newcommand{\frkc}{\mathfrak c}
	\newcommand{\frkd}{\mathfrak d}
	\newcommand {\comment}[1]{{\marginpar{*}\scriptsize\textbf{Comments:} #1}}
	
	\nc{\tforall}{\text{ for all }}
	
	\nc{\svec}[2]{{\tiny\left(\begin{matrix}#1\\
				#2\end{matrix}\right)\,}}  
	\nc{\ssvec}[2]{{\tiny\left(\begin{matrix}#1\\
				#2\end{matrix}\right)\,}} 
	
	\nc{\typeI}{local cocycle $3$-Lie bialgebra\xspace}
	\nc{\typeIs}{local cocycle $3$-Lie bialgebras\xspace}
	\nc{\typeII}{double construction $3$-Lie bialgebra\xspace}
	\nc{\typeIIs}{double construction $3$-Lie bialgebras\xspace}
	
	\nc{\bia}{{$\mathcal{P}$-bimodule ${\bf k}$-algebra}\xspace}
	\nc{\bias}{{$\mathcal{P}$-bimodule ${\bf k}$-algebras}\xspace}
	
	\nc{\rmi}{{\mathrm{I}}}
	\nc{\rmii}{{\mathrm{II}}}
	\nc{\rmiii}{{\mathrm{III}}}
	\nc{\pr}{{\mathrm{pr}}}
	\newcommand{\huaA}{\mathcal{A}}

	\nc{\OT}{constant $\theta$-}
	\nc{\T}{$\theta$-}
	\nc{\IT}{inverse $\theta$-}

	\nc{\pll}{\beta}
	\nc{\plc}{\epsilon}
	
	\nc{\ass}{{\mathit{Ass}}}
	\nc{\lie}{{\mathit{Lie}}}
	\nc{\comm}{{\mathit{Comm}}}
	\nc{\dend}{{\mathit{Dend}}}
	\nc{\zinb}{{\mathit{Zinb}}}
	\nc{\tdend}{{\mathit{TDend}}}
	\nc{\prelie}{{\mathit{preLie}}}
	\nc{\postlie}{{\mathit{PostLie}}}
	\nc{\quado}{{\mathit{Quad}}}
	\nc{\octo}{{\mathit{Octo}}}
	\nc{\ldend}{{\mathit{ldend}}}
	\nc{\lquad}{{\mathit{LQuad}}}
	
	\nc{\adec}{\check{;}} \nc{\aop}{\alpha}
	\nc{\dftimes}{\widetilde{\otimes}} \nc{\dfl}{\succ} \nc{\dfr}{\prec}
	\nc{\dfc}{\circ} \nc{\dfb}{\bullet} \nc{\dft}{\star}
	\nc{\dfcf}{{\mathbf k}} \nc{\apr}{\ast} \nc{\spr}{\cdot}
	\nc{\twopr}{\circ} \nc{\tspr}{\star} \nc{\sempr}{\ast}
	\nc{\disp}[1]{\displaystyle{#1}}
	\nc{\bin}[2]{ (_{\stackrel{\scs{#1}}{\scs{#2}}})}  
	\nc{\binc}[2]{ \left (\!\! \begin{array}{c} \scs{#1}\\
			\scs{#2} \end{array}\!\! \right )}  
	\nc{\bincc}[2]{  \left ( {\scs{#1} \atop
			\vspace{-.5cm}\scs{#2}} \right )}  
	\nc{\sarray}[2]{\begin{array}{c}#1 \vspace{.1cm}\\ \hline
			\vspace{-.35cm} \\ #2 \end{array}}
	\nc{\bs}{\bar{S}} \nc{\dcup}{\stackrel{\bullet}{\cup}}
	\nc{\dbigcup}{\stackrel{\bullet}{\bigcup}} \nc{\etree}{\big |}
	\nc{\la}{\longrightarrow} \nc{\fe}{\'{e}} \nc{\rar}{\rightarrow}
	\nc{\dar}{\downarrow} \nc{\dap}[1]{\downarrow
		\rlap{$\scriptstyle{#1}$}} \nc{\uap}[1]{\uparrow
		\rlap{$\scriptstyle{#1}$}} \nc{\defeq}{\stackrel{\rm def}{=}}
	\nc{\dis}[1]{\displaystyle{#1}} \nc{\dotcup}{\,
		\displaystyle{\bigcup^\bullet}\ } \nc{\sdotcup}{\tiny{
			\displaystyle{\bigcup^\bullet}\ }} \nc{\hcm}{\ \hat{,}\ }
	\nc{\hcirc}{\hat{\circ}} \nc{\hts}{\hat{\shpr}}
	\nc{\lts}{\stackrel{\leftarrow}{\shpr}}
	\nc{\rts}{\stackrel{\rightarrow}{\shpr}} \nc{\lleft}{[}
	\nc{\lright}{]} \nc{\uni}[1]{\tilde{#1}} \nc{\wor}[1]{\check{#1}}
	\nc{\free}[1]{\bar{#1}} \nc{\den}[1]{\check{#1}} \nc{\lrpa}{\wr}
	\nc{\curlyl}{\left \{ \begin{array}{c} {} \\ {} \end{array}
		\right .  \!\!\!\!\!\!\!}
	\nc{\curlyr}{ \!\!\!\!\!\!\!
		\left . \begin{array}{c} {} \\ {} \end{array}
		\right \} }
	\nc{\leaf}{\ell}       
	\nc{\longmid}{\left | \begin{array}{c} {} \\ {} \end{array}
		\right . \!\!\!\!\!\!\!}
	\nc{\ot}{\otimes} \nc{\sot}{{\scriptstyle{\ot}}}
	\nc{\otm}{\overline{\ot}}
	\nc{\ora}[1]{\stackrel{#1}{\rar}}
	\nc{\ola}[1]{\stackrel{#1}{\la}}
	\nc{\pltree}{\calt^\pl}
	\nc{\epltree}{\calt^{\pl,\NC}}
	\nc{\rbpltree}{\calt^r}
	\nc{\scs}[1]{\scriptstyle{#1}} \nc{\mrm}[1]{{\rm #1}}
	\nc{\dirlim}{\displaystyle{\lim_{\longrightarrow}}\,}
	\nc{\invlim}{\displaystyle{\lim_{\longleftarrow}}\,}
	\nc{\mvp}{\vspace{0.5cm}} \nc{\svp}{\vspace{2cm}}
	\nc{\vp}{\vspace{8cm}} \nc{\proofbegin}{\noindent{\bf Proof: }}
	\nc{\proofend}{$\blacksquare$ \vspace{0.5cm}}
	\nc{\freerbpl}{{F^{\mathrm RBPL}}}
	\nc{\sha}{{\mbox{\cyr X}}}  
	\nc{\ncsha}{{\mbox{\cyr X}^{\mathrm NC}}} \nc{\ncshao}{{\mbox{\cyr
				X}^{\mathrm NC,\,0}}}
	\nc{\shpr}{\diamond}    
	\nc{\shprm}{\overline{\diamond}}    
	\nc{\shpro}{\diamond^0}    
	\nc{\shprr}{\diamond^r}     
	\nc{\shpra}{\overline{\diamond}^r}
	\nc{\shpru}{\check{\diamond}} \nc{\catpr}{\diamond_l}
	\nc{\rcatpr}{\diamond_r} \nc{\lapr}{\diamond_a}
	\nc{\sqcupm}{\ot}
	\nc{\lepr}{\diamond_e} \nc{\vep}{\varepsilon} \nc{\labs}{\mid\!}
	\nc{\rabs}{\!\mid} \nc{\hsha}{\widehat{\sha}}
	\nc{\lsha}{\stackrel{\leftarrow}{\sha}}
	\nc{\rsha}{\stackrel{\rightarrow}{\sha}} \nc{\lc}{\lfloor}
	\nc{\rc}{\rfloor}
	\nc{\tpr}{\sqcup}
	\nc{\nctpr}{\vee}
	\nc{\plpr}{\star}
	\nc{\rbplpr}{\bar{\plpr}}
	\nc{\sqmon}[1]{\langle #1\rangle}
	\nc{\forest}{\calf}
	\nc{\altx}{\Lambda_X} \nc{\vecT}{\vec{T}} \nc{\onetree}{\bullet}
	\nc{\Ao}{\check{A}}
	\nc{\seta}{\underline{\Ao}}
	\nc{\deltaa}{\overline{\delta}}
	\nc{\trho}{\tilde{\rho}}
	
	\nc{\rpr}{\circ}
	\nc{\dpr}{{\tiny\diamond}}
	\nc{\rprpm}{{\rpr}}
	
	\nc{\mmbox}[1]{\mbox{\ #1\ }} \nc{\ann}{\mrm{ann}}
	\nc{\Aut}{\mrm{Aut}} \nc{\can}{\mrm{can}}
	\nc{\twoalg}{{two-sided algebra}\xspace}
	\nc{\colim}{\mrm{colim}}
	\nc{\Cont}{\mrm{Cont}} \nc{\rchar}{\mrm{char}}
	\nc{\cok}{\mrm{coker}} \nc{\dtf}{{R-{\rm tf}}} \nc{\dtor}{{R-{\rm
				tor}}}
	\renewcommand{\det}{\mrm{det}}
	\nc{\depth}{{\mrm d}}
	\nc{\Div}{{\mrm Div}} \nc{\End}{\mrm{End}} \nc{\Ext}{\mrm{Ext}}
	\nc{\Fil}{\mrm{Fil}} \nc{\Frob}{\mrm{Frob}} \nc{\Gal}{\mrm{Gal}}
	\nc{\GL}{\mrm{GL}} \nc{\Hom}{\mrm{Hom}} \nc{\hsr}{\mrm{H}}
	\nc{\hpol}{\mrm{HP}} \nc{\id}{\mrm{id}} \nc{\im}{\mrm{im}}
	\nc{\incl}{\mrm{incl}} \nc{\length}{\mrm{length}}
	\nc{\LR}{\mrm{LR}} \nc{\mchar}{\rm char} \nc{\NC}{\mrm{NC}}
	\nc{\mpart}{\mrm{part}} \nc{\pl}{\mrm{PL}}
	\nc{\ql}{{\QQ_\ell}} \nc{\qp}{{\QQ_p}}
	\nc{\rank}{\mrm{rank}} \nc{\rba}{\rm{RBA }} \nc{\rbas}{\rm{RBAs }}
	\nc{\rbpl}{\mrm{RBPL}}
	\nc{\rbw}{\rm{RBW }} \nc{\rbws}{\rm{RBWs }} \nc{\rcot}{\mrm{cot}}
	\nc{\rest}{\rm{controlled}\xspace}
	\nc{\rdef}{\mrm{def}} \nc{\rdiv}{{\rm div}} \nc{\rtf}{{\rm tf}}
	\nc{\rtor}{{\rm tor}} \nc{\res}{\mrm{res}} \nc{\SL}{\mrm{SL}}
	\nc{\Spec}{\mrm{Spec}} \nc{\tor}{\mrm{tor}} \nc{\Tr}{\mrm{Tr}}
	\nc{\mtr}{\mrm{sk}}
	
	\nc{\ab}{\mathbf{Ab}} \nc{\Alg}{\mathbf{Alg}}
	\nc{\Algo}{\mathbf{Alg}^0} \nc{\Bax}{\mathbf{Bax}}
	\nc{\Baxo}{\mathbf{Bax}^0} \nc{\RB}{\mathbf{RB}}
	\nc{\RBo}{\mathbf{RB}^0} \nc{\BRB}{\mathbf{RB}}
	\nc{\Dend}{\mathbf{DD}} \nc{\bfk}{{\bf k}} \nc{\bfone}{{\bf 1}}
	\nc{\base}[1]{{a_{#1}}} \nc{\detail}{\marginpar{\bf More detail}
		\noindent{\bf Need more detail!}
		\svp}
	\nc{\Diff}{\mathbf{Diff}} \nc{\gap}{\marginpar{\bf
			Incomplete}\noindent{\bf Incomplete!!}
		\svp}
	\nc{\FMod}{\mathbf{FMod}} \nc{\mset}{\mathbf{MSet}}
	\nc{\rb}{\mathrm{RB}} \nc{\Int}{\mathbf{Int}}
	\nc{\Mon}{\mathbf{Mon}}
	\nc{\remarks}{\noindent{\bf Remarks: }}
	\nc{\OS}{\mathbf{OS}} 
	\nc{\Rep}{\mathbf{Rep}}
	\nc{\Rings}{\mathbf{Rings}} \nc{\Sets}{\mathbf{Sets}}
	\nc{\DT}{\mathbf{DT}}
	
	\nc{\BA}{{\mathbb A}} \nc{\CC}{{\mathbb C}} \nc{\DD}{{\mathbb D}}
	\nc{\EE}{{\mathbb E}} \nc{\FF}{{\mathbb F}} \nc{\GG}{{\mathbb G}}
	\nc{\HH}{{\mathbb H}} \nc{\LL}{{\mathbb L}} \nc{\NN}{{\mathbb N}}
	\nc{\QQ}{{\mathbb Q}} \nc{\RR}{{\mathbb R}} \nc{\BS}{{\mathbb{S}}} \nc{\TT}{{\mathbb T}}
	\nc{\VV}{{\mathbb V}} \nc{\ZZ}{{\mathbb Z}}

	
	\nc{\calao}{{\mathcal A}} \nc{\cala}{{\mathcal A}}
	\nc{\calc}{{\mathcal C}} \nc{\cald}{{\mathcal D}}
	\nc{\cale}{{\mathcal E}} \nc{\calf}{{\mathcal F}}
	\nc{\calfr}{{{\mathcal F}^{\,r}}} \nc{\calfo}{{\mathcal F}^0}
	\nc{\calfro}{{\mathcal F}^{\,r,0}} \nc{\oF}{\overline{F}}
	\nc{\calg}{{\mathcal G}} \nc{\calh}{{\mathcal H}}
	\nc{\cali}{{\mathcal I}} \nc{\calj}{{\mathcal J}}
	\nc{\call}{{\mathcal L}} \nc{\calm}{{\mathcal M}}
	\nc{\caln}{{\mathcal N}} \nc{\calo}{{\mathcal O}}
	\nc{\calp}{{\mathcal P}} \nc{\calq}{{\mathcal Q}} \nc{\calr}{{\mathcal R}}
	\nc{\calt}{{\mathcal T}} \nc{\caltr}{{\mathcal T}^{\,r}}
	\nc{\calu}{{\mathcal U}} \nc{\calv}{{\mathcal V}}
	\nc{\calw}{{\mathcal W}} \nc{\calx}{{\mathcal X}}
	\nc{\CA}{\mathcal{A}}

	\nc{\fraka}{{\mathfrak a}} \nc{\frakB}{{\mathfrak B}}
	\nc{\frakb}{{\mathfrak b}} \nc{\frakd}{{\mathfrak d}}
	\nc{\oD}{\overline{D}}
	\nc{\frakF}{{\mathfrak F}} \nc{\frakg}{{\mathfrak g}}
	\nc{\frakm}{{\mathfrak m}} \nc{\frakM}{{\mathfrak M}}
	\nc{\frakMo}{{\mathfrak M}^0} \nc{\frakp}{{\mathfrak p}}
	\nc{\frakS}{{\mathfrak S}} \nc{\frakSo}{{\mathfrak S}^0}
	\nc{\fraks}{{\mathfrak s}} \nc{\os}{\overline{\fraks}}
	\nc{\frakT}{{\mathfrak T}}
	\nc{\oT}{\overline{T}}
	\nc{\frakX}{{\mathfrak X}} \nc{\frakXo}{{\mathfrak X}^0}
	\nc{\frakx}{{\mathbf x}}
	\nc{\frakTx}{\frakT}      
	\nc{\frakTa}{\frakT^a}        
	\nc{\frakTxo}{\frakTx^0}   
	\nc{\caltao}{\calt^{a,0}}   
	\nc{\ox}{\overline{\frakx}} \nc{\fraky}{{\mathfrak y}}
	\nc{\frakz}{{\mathfrak z}} \nc{\oX}{\overline{X}}
	
	\font\cyr=wncyr10
	
	\nc{\al}{\alpha}
	\nc{\lam}{\lambda}
	\nc{\lr}{\longrightarrow}
	\newcommand{\K}{\mathbb {K}}
	\newcommand{\A}{\rm A}
	
\title[Pre-anti-flexible bialgebrask]{Pre-anti-flexible bialgebras}
\author[Mafoya Landry Dassoundo]{Mafoya Landry Dassoundo}
\address[]{Chern Institute of Mathematics \& LPMC, 
Nankai University, Tianjin 300071, China} \email{dassoundo@yahoo.com}
	
	\begin{abstract}
In this paper, we derive  
pre-anti-flexible algebras structures in term of 
zero weight's Rota-Baxter operators defined on anti-flexible algebras, 
view pre-anti-flexible algebras  as a splitting 
of anti-flexible algebras, introduce the notion of 
pre-anti-flexible bialgebras and establish equivalences among 
matched pair of  anti-flexible algebras, matched pair of pre-anti-flexible algebras 
and pre-anti-flexible bialgebras. 
Investigation on special class of pre-anti-flexible bialgebras  
leads to the establishment  of 
the pre-anti-flexible Yang-Baxter equation.
Both dual bimodules of pre-anti-flexible algebras and 
dendriform algebras have the same shape and this induces
that both  pre-anti-flexible Yang-Baxter equation and 
$\mathcal{D}$-equation are identical. 
Symmetric solution of pre-anti-flexible
Yang-Baxter equation gives a pre-anti-flexible bialgebra. 
Finally, we recall and  link  $\mathcal{O}$-operators of 
anti-flexible algebras   to  bimodules of  pre-anti-flexible algebras and  built symmetric
solutions of anti-flexible Yang-Baxter equation.
\end{abstract}

	\subjclass[2010]{17A20, 17D25,  16T10, 16T15,  17B38,  16T25}
	\keywords{(pre-)anti-flexible algebra, dendriform algebras, 
		(pre-)anti-flexible bialgebra,  Yang-Baxter equation, Rota-Baxter operator}
	\maketitle
	
	\tableofcontents
	
	\numberwithin{equation}{section}
	\tableofcontents
	\numberwithin{equation}{section}
	\allowdisplaybreaks
	
	\section{Introduction and Preliminaries}
	The notion of pre-anti-flexible algebras are introduced in \cite{DBH3} to derive 
	the $\mathcal{O}$-operators of anti-flexible algebras which allow to built the 
	skew-symmetric solutions of  anti-flexible Yang-Baxter equation.
	Pre-anti-flexible algebras are closed dendriform algebras 
	which are introduced by J.-L. Loday (\cite{Loday}).
	Besides, pre-anti-flexible algebras can be considered as a  
	generalization of dendriform algebras and 
	as very well known and widespread in the literature, dendriform algebras
	are also induced by the well known notion of Rota-Baxter operators of weight zero
	(\cite{Aguiar1}) which are introduced around 1960’s by  
	G. Baxter (\cite{Baxter}) and G.-C. Rota (\cite{Rota}). Recently, 
	significant advances contributions on Rota-Baxter operators  and related
	applications are summarized in \cite{Guo} and the references therein.
	Since dendriform algebras are closed to associative algebras,
	pre-anti-flexible algebras are strongly linked to anti-flexible algebras
	(also known as center-symmetric algebras) 
	and themselves associated with Lie algebras (\cite{Hounkonnou_D_CSA}) and other 
	similar deduction are derived and readable 
	in the following diagram  which summarizes underlying relations 
	among pre-anti-flexible algebras (PAFA), 
	anti-flexible algebras (AFA), 
	Lie algebras (LA), associative algebras (AA), dendriform algebras (DA), 
	and finally with what we call
	derived Lie-admissible algebra  of a given pre-anti-flexible algebra (DLAd-PAFA)  
	\begin{eqnarray*}
		\xymatrix{
			&{\bf PAFA\;} (A, \prec, \succ) \ar[dl]_-{\mbox{C2}}\ar[d]^-{\mbox{C6}}\ar[rr]^-{\mbox{C1}}
			&&{\bf AFA\;}(A, \cdot) \ar[d]^-{\mbox{C5}}\\
			{\bf DA\;} (A, \prec_{_1}, \succ_{_1})\ar[dr]_-{\mbox{C3}}
			& {\bf{DLAd-PAFA}\;}(A, \circ )\ar[rr]^-{\mbox{C7}}&&{\bf LA\;} (A, [,]), \\	
			&{\bf AA\;}(A, \cdot_{_1} )\ar[rru]_-{\mbox{C4}}
		}
	\end{eqnarray*}
	where, for any $x,y, z\in A$, the condition C1  translates 
	$x\cdot y=x\prec y+x\succ y$, 
	C2 means $\prec_{_1}:=\prec; \succ_{_1}:=\succ$ and  
	$(x,y,z)_m=0, (x,y,z)_l=0, (x,y,z)_r=0$ which are given by 
	Eqs.~\eqref{eq:biasso} (trivial 
	pre-anti-flexible algebras are dendriform algebras), C3 describes
	$x\cdot_{_1} y= x\prec_{_1} y+x\succ_{_1}y$, C4 expresses the commutator
	$[x,y]=x\cdot_{_1}y-y\cdot_{_1} x$, C5 translates 
	$[x,y]=x\cdot y-y\cdot x$, C6 describes $x\circ y=x\succ y-y\prec x$ and finally 
	C7 expresses $[x,y]=x\circ y-y\circ x$. It is also useful to recall that any
	associative algebra is a trivial anti-flexible algebra. 
	Thus, anti-flexible algebras generalize associative algebras. 
	
	Notice here that, although the goal of this paper  
	is  not to construct a cohomology theory for 
	anti-flexible algebras,  cohomology of associative and Lie algebras, 
	and other algebras are well known. Unfortunately, despite their links to associative algebras and to Lie algebras described above,
 anti-flexible	algebras and pre-anti-flexible algebras  lack a suitable cohomology theory 
	which can justify certain shortcomings 
	on anti-flexible algebras such as for instance among many other,
	coboundary anti-flexible algebras and those of pre-anti-flexible algebras
	which are well known on associative and Lie algebras. 
	That said, all is not lost for avoid talking about some notions on the cohomology  
	of anti-flexible and pre-anti-flexible algebras.
	As proof, the analogue to the classical Yang-Baxter 
	equation on  Lie algebras derived 
	by Drinfeld (\cite{Drinfeld}), that of associative Yang-Baxter 
	on associative algebras (\cite{Aguiar, Bai_Double})
	as well as  $\mathcal{D}$-equation of dendriform algebras (\cite{Bai_Double}), 
	anti-flexible Yang-Baxter equation  recovered in  special consideration of 
	anti-flexible algebras (\cite{DBH3}) as well as 
	pre-anti-flexible Yang-Baxter equation on special class of 
	pre-anti-flexible algebras.  
	Furthermore, alternative $\mathcal{D}$-bialgebras are also
	provided and described on Cayley-Dickson matrix algebras (\cite{Gon}).
	Besides, by keeping  the spirit that dendriform algebras are  viewed as 
	splitted associative algebras (\cite{Bai_spit}), 
	pre-anti-flexible algebras are regarded as splitted anti-flexible algebras and 
	more generally, operadic definition for the notion of
	splitting  algebra structures are introduced
	and provided some equivalence with Manin products of operads in 
	quadratic operads  (\cite{Pei_Bai_Guo}).
	
	Before straight although to the goal of this paper, recall some 
	fundamentals which will be necessary throughout our 
	concern on pre-anti-flexible bialgebras. 
	In this paper, all considered vector spaces are finite-dimensional 
	over a base field  $\mathbb{F}$ whose characteristic is $0$. 
	Many derived results still hold regardless the  dimension of vector spaces
	on which they are stated. For this purpose, we mean by anti-flexible algebra (\cite{Hounkonnou_D_CSA}), 
	a couple $(A, \ast)$ where $A$ is a vector space equipped with a linear product
	"$\ast$" such that for any $x,y, z\in A$, $(x,y,z)=(z,y,x)$, where the triple is defined as
	$(x,y,z):=(x\ast y)\ast z-x\ast (y\ast z)$.
	If in addition $A$ is equipped with two linear maps $l,r:A\rightarrow \End(V)$, where $V$
	is a vector space, such that for any $x,y\in A$ 
	\begin{subequations}
		\begin{eqnarray}\label{eqbimodule1}
			l{(x\ast y)}-l(x)l(y)=r(x)r(y)-r({y \ast x}),
		\end{eqnarray}
		\begin{eqnarray}\label{eqbimodule2}
			\left[l(x), r(y)\right]= \left[l(y), r(x)\right],
		\end{eqnarray}
	\end{subequations}
	then the triple $(l,r,V)$ is called bimodule of $(A, \ast)$.
	\begin{thm}\label{thm_2} \cite{Hounkonnou_D_CSA}
		Let $(  A, \ast)$ and $(B, \circ)$ be two anti-flexible algebras. 
		Suppose that $(l_{  A}, r_{  A}, B)$ and $(l_{B}, r_{B},   A)$
		are bimodules of $(A, \ast)$ and $(B, \circ)$, respectively, where 
		$l_{  A}, r_{  A}:A\rightarrow \End(B)$ and 
		$l_{B}, r_{B}:B\rightarrow \End(A)$ are four linear maps and 
		obeying the relations,
		for any  $x, y \in A$ and for any $a, b \in B,$
		\begin{subequations}
			\begin{eqnarray}\label{eqq1}
				l_{B}(a)(x\ast y) +r_{B}(a)(y\ast x)-r_{B}(l_{  A}(x)a)y-  
				y\ast(r_{B}(a)x) -l_{B}(r_{  A}(x)a)y -  (l_{B}(a)x)\ast y  = 0, 
			\end{eqnarray}
			\begin{eqnarray}\label{eqq2}
				l_{  A}(x)(a\circ b)	+r_{  A}(x)(b\circ a)-r_{  A}(l_{B}(a)x)b- 
				b\circ (r_{  A}(x)a)+l_{  A}(r_{B}(a)x)b	- (l_{  A}(x)a)\circ b=0,
			\end{eqnarray}
			\begin{eqnarray}\label{eqq3}
				\begin{array}{lll}
					y\ast (l_{B}(a)x)+(r_{B}(a)x)\ast y - (r_{B}(a)y)\ast x-l_{B}(l_{  A}(y)a)x+ \cr 
					r_{B}(r_{  A}(x)a)y+l_{B}(l_{  A}(x)a)y  -x\ast (l_{B}(a)y)-r_{B}(r_{  A}(y)a)x=0,
				\end{array}
			\end{eqnarray}
			\begin{eqnarray}\label{eqq4}
				\begin{array}{lll}
					b \circ (l_{  A}(x)a)+(r_{  A}(x)a)\circ b -(r_{  A}(x)b)\circ a-
					l_{  A}(l_{B}(b)x)a+\cr 
					r_{  A}(r_{B}(a)x)b+l_{  A}(l_{B}(a)x)b  -
					a\circ (l_{  A}(x)b)  -r_{  A}(r_{B}(b)x)a=0. 
				\end{array}
			\end{eqnarray}
		\end{subequations}
		Then, there is an anti-flexible algebra structure on $  A \oplus B$ 
		given by for any $x,y\in A$ and any $a, b\in B$
		\begin{eqnarray*}
			(x+a)\star (y+b)= (x \ast y + l_{B}(a)y+r_{B}(b)x)+ (a \circ b + l_{A}(x)b+r_{A}(y)a).
		\end{eqnarray*}
	\end{thm}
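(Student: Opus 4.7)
The plan is to verify directly that the product $\star$ on $D := A \oplus B$ satisfies the anti-flexibility identity $(X,Y,Z)_\star = (Z,Y,X)_\star$, where $(X,Y,Z)_\star := (X\star Y)\star Z - X\star(Y\star Z)$. Since both sides are trilinear in $X, Y, Z$ and $D = A \oplus B$, it suffices to test the identity when each of $X, Y, Z$ lies entirely in one of the summands, giving $2^3 = 8$ cases. For each case I expand $(X\star Y)\star Z$ and $X\star(Y\star Z)$ using the defining formula of $\star$, then project onto the $A$-component and the $B$-component separately, and equate with the analogous expansion of $(Z,Y,X)_\star$.

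The two \emph{pure} cases $X, Y, Z \in A$ and $X, Y, Z \in B$ reduce immediately to the anti-flexibility of $(A, *)$ and $(B, \circ)$, respectively, because $\star$ restricts to the original product on each summand. The six mixed cases group into four identities under the symmetry $X \leftrightarrow Z$: the cases of shape $(A,A,B)$ and $(B,A,A)$ give one identity; $(A,B,A)$ is self-paired; $(B,B,A)$ and $(A,B,B)$ give one identity; and $(B,A,B)$ is self-paired.

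The key observation is that each of these four mixed identities splits cleanly into an $A$-component and a $B$-component, and each component matches exactly one of the hypotheses. Concretely, the case of shape $(A,A,B)$ (i.e.\ $X = x$, $Y = y$, $Z = a$) produces on its $A$-component precisely Eq.~\eqref{eqq1} and on its $B$-component precisely \eqref{eqbimodule1} applied to $(l_A, r_A, B)$. The case $(A,B,A)$ produces \eqref{eqq3} in the $A$-component and \eqref{eqbimodule2} for $(l_A, r_A, B)$ in the $B$-component. Symmetrically, $(B,B,A)$ yields \eqref{eqq2} and \eqref{eqbimodule1} for $(l_B, r_B, A)$, and $(B,A,B)$ yields \eqref{eqq4} and \eqref{eqbimodule2} for $(l_B, r_B, A)$. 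Since all of these identities are among the hypotheses, each case closes, and hence $(D, \star)$ is anti-flexible.

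The main obstacle is purely bookkeeping: every mixed case requires expanding six applications of $\star$ (three in each triple product) and carefully tracking which terms land in $A$ versus $B$, with many cross-terms of the form $l_A(\cdot)l_A(\cdot)$, $r_B(l_A(\cdot)\cdot)$, and so on. There is no conceptual difficulty beyond being systematic; the good internal consistency check is that each of the four hypotheses \eqref{eqq1}--\eqref{eqq4} must appear exactly once across the four mixed cases, and the four bimodule components must exhaust the axioms \eqref{eqbimodule1}, \eqref{eqbimodule2} for both $(l_A, r_A, B)$ and $(l_B, r_B, A)$, confirming that the hypotheses are both necessary and sufficient.
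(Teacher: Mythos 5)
Your proposal is correct and is the standard direct verification; the paper itself states this theorem without proof (it is recalled from the cited reference), so there is no in-paper argument to diverge from. The reduction to eight pure cases by trilinearity of $(X,Y,Z)_\star-(Z,Y,X)_\star$, the pairing of the six mixed cases under $X\leftrightarrow Z$, and the identification of each $A$- or $B$-component with one of \eqref{eqq1}--\eqref{eqq4} or with the bimodule axioms \eqref{eqbimodule1}--\eqref{eqbimodule2} all check out on expansion. One caveat: carrying out the $(B,B,A)$ case literally produces the exact mirror image of \eqref{eqq1}, i.e.\ \eqref{eqq2} with the term $+l_{A}(r_{B}(a)x)b$ replaced by $-l_{A}(r_{B}(a)x)b$; the printed sign in \eqref{eqq2} appears to be a transcription typo (observe that \eqref{eqq3} and \eqref{eqq4} are exact mirror images of each other under swapping the roles of $A$ and $B$, while \eqref{eqq1} and \eqref{eqq2} as printed are not), so your identification is right up to that correction and you should flag it rather than silently match the printed hypothesis.
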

	\begin{defi}\cite{DBH3}
		A pre-anti-flexible algebra is a vector space $  A$ equipped with two bilinear products 
		$\prec, \succ:  A\otimes  A \rightarrow   A$ 
		satisfying the following relations
		\begin{subequations}
			\begin{eqnarray}\label{eq_pre_antiflexible_1}
				(x,y,z)_{_m}=(z,y,x)_{_m},  \; \; \; \forall x,y,z\in   A,
			\end{eqnarray}
			\begin{eqnarray}\label{eq_pre_antiflexible_2}
				(x,y,z)_{_l}=(z,y,x)_{_r},  \; \; \; \forall x,y,z\in   A,
			\end{eqnarray}
		\end{subequations}
		where for any $x,y, z\in   A$, 	
		\begin{subequations}\label{eq:biasso}
			\begin{eqnarray}\label{eq_biasso_m}
				(x,y,z)_{_m}:=(x \succ y) \prec z-x \succ (y \prec z),
			\end{eqnarray}
			\begin{eqnarray}\label{eq_biasso_l}
				(x,y,z)_{_l}:=(x\cdot y)\succ z-x\succ (y\succ z),
			\end{eqnarray}
			\begin{eqnarray}\label{eq_biasso_r}
				(x,y,z)_{_r}:=(x\prec y)\prec z-x\prec (y\cdot z),
			\end{eqnarray}
		\end{subequations}
		with $x\cdot y=x\prec y+x\succ y$.
		
		Equivalently, a pre-anti-flexible algebra is a triple 
		$(A, \prec, \succ)$ such that  $A$ is a vector space and
		$\prec, \succ: A\times A \rightarrow A$ are 
		two linear maps  satisfying the relations for any 
		$x,y,z\in A$
		\begin{subequations}
			\begin{eqnarray}\label{eq:pre-antiflexible1}
				(x\succ y) \prec z-x\succ (y\prec z)=(z\succ y)\prec x-
				z\succ(y\prec x),
			\end{eqnarray}
			\begin{eqnarray}\label{eq:pre-antiflexible2}
				(x\prec y +x\succ y )\succ z-x\succ (y\succ z)=
				(z\prec y)\prec x-z\prec (y\prec x+y\succ x).
			\end{eqnarray}
		\end{subequations}
	\end{defi}
	\begin{ex}
		For a given associative $(A, \ast)$, setting for any $x,y\in A$,
		$x\succ y=x\ast y$ (or $x\succ y=y\ast x$) and 
		$x\prec y=0$, then $(A, \prec, \succ)$ is 
		a pre-anti-flexible algebra. Similarly, $(A, \prec, \succ)$ is a 
		pre-anti-flexible algebra by setting for any $x,y\in A$,
		$x\prec y=x\ast y$ (or $x\prec y=y\ast x$) and $x\succ y=0$.
	\end{ex}
	\begin{rmk}\label{rmk_1}
		Let $(  A, \prec, \succ)$ be a pre-anti-flexible algebra.
		\begin{enumerate}
			\item\label{rmk_flex}
			It is well known that the  couple $(  A, \cdot)$ is an 
			anti-flexible algebra (\cite{DBH3}), where 
			for any $x,y\in   A$, $x\cdot y=x\prec y+x\succ y$, i.e.  
			$(x,y,z)=(z,y,x)$, 
			and we will denote this  anti-flexible algebra by $aF(A)$ 
			call it underlying anti-flexible algebra of the pre-anti-flexible algebra
			$(A, \prec, \succ)$.
			\item
			As we can see, if both  sides of  equality  in the  
			Eqs.~\eqref{eq:pre-antiflexible1} and \eqref{eq:pre-antiflexible2} are zero
			i.e. for any $ x,y,z\in   A, (x,y,z)_{_m}=0$, $(x,y,z)_{_l}=0$ 
			and $(x,y,z)_{_r}=0$, then $(A, \prec, \succ)$  
			is a dendriform algebra i.e. the couple $(A, \prec, \succ)$ such that 
			for any $x,y,z\in A,$
			\begin{eqnarray*}
				&&(x\succ y) \prec z-x\succ (y\prec z)=0,\cr
				&&(x\prec y +x\succ y )\succ z-x\succ (y\succ z)=0,\cr
				&&(x\prec y)\prec z-x\prec (y\prec z+y\succ z)=0,
			\end{eqnarray*}
			introduced by J.-L.~Loday (\cite{Loday}). 
			Clearly,  dendriform algebra is a pre-anti-flexible algebra
			and then pre-anti-flexible algebras can  viewed as a generalization 
			of dendriform algebras. 
		\end{enumerate}
	\end{rmk}
	Throughout this paper, if there is  no other consideration,  for any
	$x,y\in A$, $x\ast y$ or $x\cdot y$ will simply written by $xy$. 
	Furthermore, underlying  anti-flexible algebra structure 
	of a given pre-anti-flexible algebra will generally denote by "$\cdot$" as 
	in Remark~\ref{rmk_1}~\eqref{rmk_flex}.
	Moreover, for a given pre-anti-flexible algebra 
	$(A, \prec, \succ)$,  we denote  by 
	$L_{\prec}, R_{\prec}: A\rightarrow \End(A)$ the left  
	and right multiplication operators, respectively, on $(A, \prec)$
	and similarly by $L_{\succ}, R_{\succ}: A\rightarrow \End(A)$ these on 
	$(A, \succ)$ which are  defined as  
	$\forall x,y \in A,$
	\begin{equation*}
		L_{_\prec}(x)y=x\prec y,\;\; R_{_\prec}(x)y=y\prec x,\;\;
		L_{_\succ}(x)y=x\succ y,\;\; R_{_\succ}(x)y=y\succ x.
	\end{equation*}
	\begin{thm}\cite{DBH3}\label{Theo_existance_pre_anti_flexible}
		Let $(A, \ast)$ be an anti-flexible algebra equipped with a 
		non-degenerate  bilinear form $\omega: A\otimes A\rightarrow \mathbb{F}$ satisfying 
		\begin{equation}\label{eq:simplectic_form}
			\omega(x\ast y, z)+\omega(y\ast z, x)+\omega(z\ast x, y)=0,
			\quad \forall x,y\in A.
		\end{equation}
		Then there is a pre-anti-flexible algebra structure 
		"$\prec, \succ$" defined on $  A$  
		satisfying the following relation, for any $x,y, z\in   A$,
		\begin{eqnarray}\label{eq_useful1}
			\omega(x\prec y,z)=\omega(x, y\ast z), \quad 
			\omega(x\succ y, z)=\omega(y, z\ast x).
		\end{eqnarray}
	\end{thm}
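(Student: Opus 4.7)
The plan is to exploit non-degeneracy of $\omega$ to translate every identity involving $\prec$ and $\succ$ into an identity inside the symplectic form, to eliminate $\prec$ and $\succ$ in favor of $\ast$ using \eqref{eq_useful1}, and to close the argument by invoking the anti-flexibility of $(A, \ast)$. More precisely, non-degeneracy identifies $A$ with $A^{*}$ through $u \mapsto \omega(u, \cdot)$, so for each pair $x, y \in A$ the linear functionals $z \mapsto \omega(x, y \ast z)$ and $z \mapsto \omega(y, z \ast x)$ are represented by unique elements which we define to be $x \prec y$ and $x \succ y$ respectively. Every identity on $(A, \prec, \succ)$ is then verified by pairing both sides against an arbitrary $w \in A$ and appealing to non-degeneracy.

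The first auxiliary fact I would establish is that the product $x \cdot y := x \prec y + x \succ y$ coincides with $x \ast y$. Indeed,
\[
\omega(x \cdot y, z) = \omega(x, y \ast z) + \omega(y, z \ast x),
\]
and skew-symmetry of the symplectic form $\omega$ combined with the cocycle condition~\eqref{eq:simplectic_form} collapses the right-hand side to $\omega(x \ast y, z)$, so $x \cdot y = x \ast y$ by non-degeneracy. For~\eqref{eq_pre_antiflexible_1}, two successive applications of \eqref{eq_useful1} give
\[
\omega\bigl((x, y, z)_{m}, w\bigr) = \omega\bigl(y, (z \ast w) \ast x - z \ast (w \ast x)\bigr) = \omega(y, (z, w, x)),
\]
where $(\,\cdot\,,\,\cdot\,,\,\cdot\,)$ denotes the $\ast$-associator; the analogous calculation on the right-hand side yields $\omega(y, (x, w, z))$, and the two agree by anti-flexibility $(z, w, x) = (x, w, z)$. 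For~\eqref{eq_pre_antiflexible_2}, using the equality $x \cdot y = x \ast y$ just established, the two sides transform into
\[
\omega\bigl((x, y, z)_{l}, w\bigr) = -\omega(z, (w, x, y)), \qquad \omega\bigl((z, y, x)_{r}, w\bigr) = -\omega(z, (y, x, w)),
\]
and anti-flexibility of $(A, \ast)$ once more closes the argument.

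The main difficulty is not any single step but bookkeeping: each application of \eqref{eq_useful1} moves a $\prec$- or $\succ$-factor out of the first slot of $\omega$ and shifts a factor of $\ast$ into the second slot, so one must track the order of the arguments meticulously to recognize the $\ast$-associators that emerge. It is worth emphasizing that the cocycle condition~\eqref{eq:simplectic_form}, together with skew-symmetry of $\omega$, is used exactly once — in verifying $x \cdot y = x \ast y$ — while the two pre-anti-flexible associator identities themselves are consequences of the definitions of $\prec$, $\succ$ and the anti-flexibility of $(A, \ast)$.
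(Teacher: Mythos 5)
Your proposal is correct and follows the standard argument for this result (the paper itself gives no proof, deferring entirely to \cite{DBH3}): one defines $x\prec y$ and $x\succ y$ as the unique elements representing the functionals $z\mapsto\omega(x,y\ast z)$ and $z\mapsto\omega(y,z\ast x)$ via the non-degenerate form, derives $x\prec y+x\succ y=x\ast y$ from the cocycle identity, and reduces the two pre-anti-flexible identities to the anti-flexibility of $\ast$; your associator computations $\omega((x,y,z)_{m},w)=\omega(y,(z,w,x))$, $\omega((x,y,z)_{l},w)=-\omega(z,(w,x,y))$ and $\omega((z,y,x)_{r},w)=-\omega(z,(y,x,w))$ all check out. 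The one point worth flagging is that your step $\omega(x\cdot y,z)=\omega(x\ast y,z)$ requires $\omega$ to be skew-symmetric, which is not stated explicitly in the hypotheses as reproduced here but is clearly intended (the label of \eqref{eq:simplectic_form} and the only use the paper makes of the theorem, applied to the form \eqref{eq_skew_symmetric_form} on $A\oplus A^{*}$, both presuppose it).
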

	To make this introductory section  devoted to 
	fundamentals necessary for addressed  main issues 
	as short as possible, we end it by outlined the content 
	of this article  as follows.
	In section~\ref{section1}, we prove and generalize that  Rota-Baxter 
	operators on an anti-flexible algebras
	induce  pre-anti-flexible algebras.  
	In Section~\ref{section2},
	we study bimodules and matched pairs of pre-anti-flexible algebras. 
	Precisely, we derive the dual bimodules of 
	bimodules of an pre-anti-flexible algebras.
	In Section~\ref{section3}, 
	we establish the equivalences among matched pair of the underlying 
	anti-flexible algebras of pre-anti-flexible algebras,
	matched pair of pre-anti-flexibles algebras,  
	and to   pre-anti-flexible bialgebras.
	In Section~\ref{section4}, 
	we rule on a special class of pre-anti-flexible bialgebras 
	which lead to the introduction
	of the pre-anti-flexible Yang-Baxter equation. 
	A symmetric solution of the  pre-anti-flexible Yang-Baxter
	equation gives a such  pre-anti-flexible bialgebra. 
	Finally in Section~\ref{section5}, 
	we recall the notions of  $\mathcal{O}$-operators of  anti-flexible algebras
	and intertwine this notion to that  of bimodules of  pre-anti-flexible algebras
	and use the relationships among
	them to  provide the
	symmetric solutions of pre-anti-flexible Yang-Baxter equation in  
	pre-anti-flexible bialgebras.
	\section{Rota-Baxter operators and pre-anti-flexible algebras}\label{section1}
	In this section, we are going to express pre-anti-flexible algebras
	in terms of  Rota-Baxter operator of weight zero defined on anti-flexible algebras.
	\begin{defi}
		Let $(A,\ast)$ be an anti-flexible algebra. 
		A  Rota-Baxter operator ($\mathrm{R_B}$) of weight zero on $A$ 
		is a linear operator $\mathrm{R_B}:A\rightarrow A$ satisfying 
		\begin{eqnarray}\label{eq:Rota-Baxter}
			\mathrm{R_B}(x)\ast \mathrm{R_B}(y)=
			\mathrm{R_B}(x\ast \mathrm{R_B}(y)+\mathrm{R_B}(x)\ast y), \;\forall x,y\in A.
		\end{eqnarray} 
	\end{defi}
	\begin{thm}
		Let  $(A, \ast)$ be an anti-flexible algebra equipped with a linear map 
		$\alpha:A\rightarrow A$. Consider the bilinear products "$\prec, \succ$"
		given by for any $x,y\in A,$
		\begin{eqnarray}\label{eq:pre-anti-flexible-Rota-Baxter}
			x\succ y=\alpha(x)\ast y,\;\quad 
			x\prec y=x\ast \alpha(y).
		\end{eqnarray}
		Then the triple $(A, \prec, \succ)$ is a pre-anti-flexible algebra
		if and only if for any $x,y,z\in A,$
		\begin{eqnarray}\label{eq:identity-RB-pre-anti-flexible}
			(\alpha(x)\ast \alpha(y)-\alpha(x\ast \alpha(y)+\alpha(x)\ast y))\ast z+
			z\ast (\alpha(y)\ast \alpha(x)-\alpha(y\ast \alpha(x)+\alpha(y)\ast x))=0.
		\end{eqnarray}
	\end{thm}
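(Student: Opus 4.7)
My plan is to rewrite each of the two pre-anti-flexible axioms directly in terms of the product $\ast$ via the substitutions $x\succ y = \alpha(x)\ast y$ and $x\prec y = x\ast\alpha(y)$, and then see what remains after exploiting the anti-flexibility of $\ast$. Both implications of the ``if and only if'' will then follow from the same computation since every substitution is reversible.

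First I would verify that the first axiom $(x,y,z)_{m}=(z,y,x)_{m}$ holds automatically. A direct expansion yields
\[
(x,y,z)_{m}=(\alpha(x)\ast y)\ast\alpha(z)-\alpha(x)\ast(y\ast\alpha(z)),
\]
which is precisely the associator of $(A,\ast)$ evaluated on the triple $(\alpha(x), y, \alpha(z))$. The same computation with $x$ and $z$ swapped gives the associator on $(\alpha(z), y, \alpha(x))$. Anti-flexibility of $(A,\ast)$ then forces equality, so this axiom imposes no condition on $\alpha$.

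The content of the theorem is thus concentrated in the second axiom $(x,y,z)_{l}=(z,y,x)_{r}$. Expanding using $x\cdot y = x\ast\alpha(y)+\alpha(x)\ast y$, the equation $(x,y,z)_{l}-(z,y,x)_{r}=0$ becomes
\[
\alpha(x\ast\alpha(y)+\alpha(x)\ast y)\ast z \;-\; \alpha(x)\ast(\alpha(y)\ast z) \;-\; (z\ast\alpha(y))\ast\alpha(x) \;+\; z\ast\alpha(y\ast\alpha(x)+\alpha(y)\ast x)=0.
\]
Here the two middle terms are the ``bad'' ones because they are not of the form displayed in the theorem. To eliminate them I would apply the anti-flexibility of $(A,\ast)$ to the triple $(\alpha(x),\alpha(y),z)$, which gives
\[
\alpha(x)\ast(\alpha(y)\ast z)=(\alpha(x)\ast\alpha(y))\ast z-(z\ast\alpha(y))\ast\alpha(x)+z\ast(\alpha(y)\ast\alpha(x)).
\]
Substituting this into the previous display cancels the two copies of $(z\ast\alpha(y))\ast\alpha(x)$, and after collecting the remaining terms on the left and the right of a tensor factor $z$ one lands exactly on the stated identity, up to an overall sign.

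The only obstacle is bookkeeping: there are four $\ast$-associators interacting, and one must choose the correct associator of $(A,\ast)$ to use (namely the one on $(\alpha(x),\alpha(y),z)$) so that the right pair of terms cancels and the residue splits cleanly as $(\,\cdots)\ast z+z\ast(\,\cdots)$. Once that pairing is identified, both the necessity and the sufficiency of the condition follow at once since every manipulation above is an equivalence.
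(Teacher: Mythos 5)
Your proposal is correct and follows essentially the same route as the paper: the $(x,y,z)_m$ axiom is dispatched by recognizing it as the $\ast$-associator on $(\alpha(x),y,\alpha(z))$, and the $(x,y,z)_l=(z,y,x)_r$ axiom is reduced to the stated identity by invoking anti-flexibility on the triple $(\alpha(x),\alpha(y),z)$. The paper merely packages the same cancellation differently, writing each of $(x,y,z)_l$ and $(z,y,x)_r$ as a ``defect'' term plus an associator and observing that the two associators coincide.
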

	\begin{proof}
		For any $x,y,z\in A$,  we have
		\begin{eqnarray*}
			(x,y,z)_m=(\alpha(z), y, \alpha(x))=(\alpha(x), y, \alpha(z))=(z, y, x)_m.
		\end{eqnarray*}
		Thus the bilinear products given by Eq.~\eqref{eq:pre-anti-flexible-Rota-Baxter} satisfy Eq.~\eqref{eq_pre_antiflexible_1}.
		
		Besides, we have 
		\begin{eqnarray*}
			(x,y,z)_l=-(\alpha(x)\ast \alpha(y)-\alpha(x\ast \alpha(y)+\alpha(x)\ast y))\ast z+(z, \alpha(y), \alpha(x))
		\end{eqnarray*}
		and 
		\begin{eqnarray*}
			(z, y, x)_r=z\ast (\alpha(y)\ast \alpha(x)-
			\alpha(y\ast \alpha(x)+\alpha(y)\ast x))+(\alpha(x), \alpha(y), z).
		\end{eqnarray*}
		Therefore, Eq.~\eqref{eq_pre_antiflexible_2} is equivalent to 
		Eq.~\eqref{eq:identity-RB-pre-anti-flexible}.
	\end{proof}
	It is obvious to remark that the previous theorem generalizes
	the following corollary which links Rota-Baxter operators to 
	pre-anti-flexible algebras.
	\begin{cor}
		If $\alpha:A\rightarrow A$ is a Rota-Baxter operator
		defined on an anti-flexible algebra $(A, \ast)$
		i.e. $\alpha$ is a linear map satisfies Eq.~\eqref{eq:Rota-Baxter}, then there is 
		a pre-anti-flexible algebra structure "$\prec, \succ$" on $A$ given by 
		Eq.~\eqref{eq:pre-anti-flexible-Rota-Baxter}. 
	\end{cor}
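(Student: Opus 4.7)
The plan is to obtain this as an immediate specialization of the preceding theorem. By that theorem, the formulas $x\succ y=\alpha(x)\ast y$ and $x\prec y=x\ast \alpha(y)$ give a pre-anti-flexible algebra structure on $A$ if and only if the identity
\begin{equation*}
\bigl(\alpha(x)\ast \alpha(y)-\alpha(x\ast \alpha(y)+\alpha(x)\ast y)\bigr)\ast z+ z\ast \bigl(\alpha(y)\ast \alpha(x)-\alpha(y\ast \alpha(x)+\alpha(y)\ast x)\bigr)=0
\end{equation*}
holds for every $x,y,z\in A$. So the only thing to verify is that this identity follows from the Rota-Baxter condition.

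The key observation is that the Rota-Baxter relation \eqref{eq:Rota-Baxter} asserts exactly that $\alpha(x)\ast \alpha(y)-\alpha(x\ast \alpha(y)+\alpha(x)\ast y)=0$ for all $x,y\in A$. I would first apply this with the given arguments $x,y$ to kill the left summand, and then apply it again with $x$ and $y$ swapped (the Rota-Baxter identity is symmetric in the roles of its two inputs in the sense that it holds for any ordered pair) to kill the right summand. Each factor inside the outer products with $z$ vanishes individually, so the whole expression is $0\ast z+z\ast 0=0$, and the required identity holds.

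There is essentially no obstacle: both sides vanish term by term without needing the anti-flexible associator identity $(x,y,z)=(z,y,x)$ or any cancellation between the two summands. The result is therefore a one-line corollary of the theorem. If anything, the only point worth being careful about is to note that the Rota-Baxter identity is used in two instances (with inputs $(x,y)$ and with inputs $(y,x)$), so no extra symmetry or further hypothesis on $\alpha$ is required.
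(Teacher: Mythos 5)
Your proposal is correct and matches the paper's (implicit) argument: the paper simply observes that the preceding theorem generalizes the corollary, the point being exactly that the Rota-Baxter identity makes each of the two summands in Eq.~\eqref{eq:identity-RB-pre-anti-flexible} vanish separately. Your explicit note that the identity is invoked twice, once for the pair $(x,y)$ and once for $(y,x)$, is the only detail the paper leaves unsaid.
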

	
	\section{Bimodules and matched pair of  pre-anti-flexible algebras}\label{section2}
	In this section, we provide bimodules and dual bimodules of 
	pre-anti-flexible algebras. We  also
	introduce matched pair of pre-anti-flexible algebras and 
	equivalently link them to a matched pair of their  
	underlying anti-flexible algebras. Finally, we define anti-flexible bialgebras 
	and establish related identities.
	\begin{defi}
		Let $(  A, \prec, \succ)$ be a pre-anti-flexible algebra and $V$ be a vector space. 
		Consider the four linear maps
		$l_{_\succ}, l_{_\prec},r_{_\succ}, r_{_\prec}:  A\rightarrow \End(V) $. 
		The quintuple $(l_{_\succ},r_{_\succ},  l_{_\prec}, r_{_\prec}, V)$
		is called a bimodule of $(  A, \prec, \succ)$ if for any $x,y\in   A$,
		\begin{subequations}
			\begin{eqnarray}\label{eq_bimodule_pre_anti_flexible1}
				[r_{_\prec}(x), l_{_\succ}(y)]=[r_{\prec}(y), l_{_\succ}(x)],
			\end{eqnarray}
			\begin{eqnarray}\label{eq_bimodule_pre_anti_flexible2}
				l_{_\prec}(x\succ y)-l_{_\succ}(x)l_{_\prec}(y)=
				r_{_\prec}(x)r_{_\succ}(y)-r_{_\succ}(y\prec x),
			\end{eqnarray}
			\begin{eqnarray}\label{eq_bimodule_pre_anti_flexible3}
				l_{_\succ}(x\cdot y)-l_{_\succ}(x)l_{_\succ}(y)=
				r_{_\prec}(x)r_{_\prec}(y)-r_{_\prec}(y\cdot x),
			\end{eqnarray}
			\begin{eqnarray}\label{eq_bimodule_pre_anti_flexible4}
				r_{_\succ}(x)l_{\cdot}(y)-l_{_\succ}(y)r_{_\succ}(x)=
				r_{_\prec}(y)l_{_\prec}(x)-l_{_\prec}(x)r_{\cdot}(y),
			\end{eqnarray}
			\begin{eqnarray}\label{eq_bimodule_pre_anti_flexible5}
				r_{_\succ}(x)r_{\cdot}(y)-r_{_\succ}(y\succ  x)=
				l_{_\prec}(x\prec y)-l_{_\prec}(x)l_{\cdot}(y),
			\end{eqnarray}
		\end{subequations}
		where, $x\cdot y=x\prec y+x\succ y, l_{_\cdot}=l_{_\prec}+l_{_\succ}$ and 
		$r_{_\cdot}=r_{_\prec}+r_{_\succ}$.
	\end{defi}
	\begin{pro}
		Let $(  A, \prec, \succ)$ be a pre-anti-flexible algebra and $V$ be a vector space. 
		Consider the four linear maps
		$l_{_\succ}, l_{_\prec},r_{_\succ}, r_{_\prec}:   A\rightarrow \End(V) $. 
		The quintuple $(l_{_\succ},r_{_\succ},  l_{_\prec}, r_{_\prec}, V)$
		is called a bimodule of $(  A, \prec, \succ)$ if and only if there is a 
		pre-anti-flexible algebra defined on $  A\oplus V$ by for any 
		$x, y\in   A$, $u, v\in V$, 
		\begin{eqnarray}\label{eq_pre_anti_flexible_bimodule}
			\begin{array}{ccc}
				(x+u)\prec'(y+v)=x\prec y+l_{_\prec}(x)v+r_{_\prec}(y)u,\cr
				(x+u)\succ'(y+v)=x\succ y+l_{_\succ}(x)v+r_{_\succ}(y)u,
			\end{array}
		\end{eqnarray}
		and 
		$ 
		(x+u)\cdot'(y+v)=(x+u)\prec'(y+v)+(x+u)\succ'(y+v).
		$
	\end{pro}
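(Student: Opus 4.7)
The plan is to verify this semi-direct product type statement by expanding the two defining relations of a pre-anti-flexible algebra, Eqs.~\eqref{eq:pre-antiflexible1} and \eqref{eq:pre-antiflexible2}, on the direct sum $A \oplus V$ equipped with the products $\prec',\succ'$ of \eqref{eq_pre_anti_flexible_bimodule}, and matching the $V$-components of the resulting identities against the five bimodule axioms \eqref{eq_bimodule_pre_anti_flexible1}--\eqref{eq_bimodule_pre_anti_flexible5}. A key simplification is that \eqref{eq_pre_anti_flexible_bimodule} provides no product $V \otimes V \to V$: every product of two elements of $V$ vanishes, so every associator $(\xi,\eta,\zeta)_{m'}$, $(\xi,\eta,\zeta)_{l'}$ or $(\xi,\eta,\zeta)_{r'}$ on $A \oplus V$ having two or more arguments taken from $V$ is identically zero. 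Hence only triples with zero or one $V$-argument need be checked; the all-$A$ case simply reproduces the pre-anti-flexible axioms for $(A,\prec,\succ)$, which hold by hypothesis, and all remaining content lies in the three mixed substitutions.

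For the first relation $(x,y,z)_m = (z,y,x)_m$, I will substitute each of $x, y, z$ in turn by an element of $V$, keep the other two in $A$, and project onto $V$. Using the identities $u \succ' y = r_\succ(y) u$, $y \prec' w = l_\prec(y) w$ and their analogues, the middle-slot substitution $y \mapsto v \in V$ produces $[r_\prec(z), l_\succ(x)] = [r_\prec(x), l_\succ(z)]$, namely \eqref{eq_bimodule_pre_anti_flexible1}. Either outer-slot substitution ($x \in V$ or $z \in V$) yields the same relation $l_\prec(x \succ y) - l_\succ(x) l_\prec(y) = r_\prec(x) r_\succ(y) - r_\succ(y \prec x)$, namely \eqref{eq_bimodule_pre_anti_flexible2}; the redundancy here reflects the manifest left-right symmetry of the axiom itself.

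For the second relation $(x,y,z)_l = (z,y,x)_r$, the three slot substitutions produce three independent identities. Using $u \cdot' x = r_\cdot(x) u$ and $x \cdot' u = l_\cdot(x) u$, the left-slot substitution ($x \in V$) gives \eqref{eq_bimodule_pre_anti_flexible5}, the middle-slot substitution ($y \in V$) gives \eqref{eq_bimodule_pre_anti_flexible4}, and the right-slot substitution ($z \in V$) gives \eqref{eq_bimodule_pre_anti_flexible3}. Each direction of the equivalence then follows: the bimodule axioms imply that the mixed $V$-components of both associator identities agree, so $(A \oplus V, \prec', \succ')$ is a pre-anti-flexible algebra; conversely, if $A \oplus V$ is pre-anti-flexible then each bimodule identity is recovered by applying the relations to the corresponding mixed triple. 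The main obstacle is purely bookkeeping—correctly relabeling variables when aligning the extracted identities with \eqref{eq_bimodule_pre_anti_flexible1}--\eqref{eq_bimodule_pre_anti_flexible5}—rather than any conceptual difficulty.
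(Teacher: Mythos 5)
Your proposal is correct and follows essentially the same route as the paper: expand the associators $(\cdot,\cdot,\cdot)_{m}$, $(\cdot,\cdot,\cdot)_{l}$, $(\cdot,\cdot,\cdot)_{r}$ on $A\oplus V$ for the products \eqref{eq_pre_anti_flexible_bimodule} and identify the $V$-components of the two defining relations with the five bimodule axioms (the paper does this with all three general arguments $x+u$, $y+v$, $z+w$ at once and reads off the coefficients of $u$, $v$, $w$, which is equivalent to your slot-by-slot substitution since products of two $V$-elements vanish). Your matching of slots to axioms — \eqref{eq_bimodule_pre_anti_flexible1} and \eqref{eq_bimodule_pre_anti_flexible2} (twice) from the $m$-relation, and \eqref{eq_bimodule_pre_anti_flexible5}, \eqref{eq_bimodule_pre_anti_flexible4}, \eqref{eq_bimodule_pre_anti_flexible3} from the left, middle and right slots of the $l$/$r$-relation — checks out against the paper's computation.
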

	\begin{proof}
		According to  Eq.~\eqref{eq_pre_anti_flexible_bimodule} 
		we have for any $x,y,z\in   A$ and for any $u,v,w\in V$, 
		\begin{eqnarray*}
			(x+u, y+v, z+w)_{_m}&=&((x+u)\succ'(y+v) )\prec' (z+w)-(x+u)\succ'((y+v)\prec'(z+w))\cr 
			&=&(x,y,z)_{_m}+\{l_{_\prec}(x\succ y)-l_{_\succ}(x)(l_{_\prec}(y))\}w\cr 
			&+&\{r_{_\prec}(z)l_{_\succ}(x)-l_{_\succ}(x)r_{_\prec}(z)\}v+
			\{r_{_\prec}(z)r_{_\succ}(y)-r_{_\prec}(y\prec z)\}u.
		\end{eqnarray*}
		\begin{eqnarray*}
			(x+u, y+v, z+w)_{_l}&=&((x+u)\cdot'(y+v) )\succ' (z+w)-(x+u)\succ'((y+v)\succ'(z+w))\cr 
			&=&(x,y,z)_{_l}+\{ l_{\cdot}(x\cdot y)-l_{_\succ}(x)l_{_\succ}(y) \}w\cr 
			&+&\{r_{_\succ}(z)l_{\cdot}(x)-l_{_\succ}(x)r_{_\succ}(z)\}v+
			\{r_{_\succ}(z)r_{\cdot}(y)-r_{_\succ}(y\succ z)\}u.
		\end{eqnarray*}
		\begin{eqnarray*}
			(x+u, y+v, z+w)_{_r}&=&((x+u)\prec'(y+v) )\prec' (z+w)-(x+u)\prec'((y+v)\cdot'(z+w))\cr 
			&=&(x,y,z)_{_r}+\{l_{_\prec}(x\prec y)-l_{_\prec}(x)l_{\cdot}(y) \}w\cr 
			&+&\{r_{_\prec}(z)l_{_\prec}(x)-l_{_\prec}(x)r_{\cdot}(z) \}v+
			\{r_{_\prec}(z)r_{_\prec}(y)-r_{_\prec}(y\cdot z)\}u.
		\end{eqnarray*}
		If in addition the four linear maps $l_{_\succ}, l_{_\prec},r_{_\succ}, r_{_\prec}$ 
		satisfy Eqs.~\eqref{eq_bimodule_pre_anti_flexible1}~-~
		\eqref{eq_bimodule_pre_anti_flexible5}, 
		then for any $x,y,z\in   A$, and for any  
		$u, v,w\in V$ the following conditions
		are satisfied, 
		\begin{eqnarray*}
			(x+u, y+v, z+w)_{_m}=(z+w, y+v, x+u)_{_m},\;\; 
			(x+u, y+v, z+w)_{_l}=(z+w, y+v, x+u)_{_r}.
		\end{eqnarray*}
		Therefore, holds the equivalence.
	\end{proof}
	In the following of this paper, for a given pre-anti-flexible algebra
	$(A, \prec, \succ)$, a vector space  $V$ and a linear map 
	$\varphi : A\rightarrow \End(V)$, 
	its dual linear map is defined as $\varphi^* : A\rightarrow \End(V^*)$ by
	\begin{eqnarray}\label{eq_dual_map}
		\langle \varphi^*(x)u^*, v\rangle=
		\langle u^*, \varphi(x)v\rangle , \quad \forall x\in A, \; v\in V, u^*\in V^*,
	\end{eqnarray}
	where $\langle , \rangle $ is the usual pairing between $V$ and $V^*$. 
	In addition, we also denote by $\sigma$ a linear map from 
	$V\otimes V$ into itself or from $V^*\otimes V^*$ into itself define by for any 
	$u,v\in V$, $u^*, v^*\in V^*$, 
	$\sigma(u\otimes v)=v\otimes u$ and $\sigma(u^*\otimes v^*)=v^*\otimes u^*$.
	\begin{pro}\label{prop_operation_bimodule_pre_anti_flexible}
		Let $(l_{_\succ},r_{_\succ},  l_{_\prec}, r_{_\prec}, V)$ be a bimodule of a  
		pre-anti-flexible algebra $(  A, \prec, \succ)$, where 
		$V$ is a vector space and $l_{_\succ}, l_{_\prec},r_{_\succ}, r_{_\prec}:  
		A\rightarrow \End(V) $ 
		are four linear maps. We have:
		\begin{enumerate}
			\item $(l_{_\succ},0,  0, r_{_\prec}, V)$ , 
			$(r^*_{_\prec}, l^*_{_\prec} , l^*_{_\succ}, r^*_{_\succ}  , V^*)$and
			$(r^*_{_\prec}, 0 , 0,l^*_{_\succ},  V^*)$   
			are bimodules of the pre-anti-flexible algebra $(  A, \prec, \succ)$,
			\item\label{eq:one} $(l_{\cdot}, r_{\cdot}, V)$, $(l_{_\succ}, r_{_\prec}, V)$, 
			$(r^*_{\cdot}  , l^*_{\cdot}  , V^*)$ and 
			$(r^*_{_\prec}  , l^*_{_\succ}  , V^*)$ are bimodules of 
			the underlying anti-flexible algebra $aF(A)$ of $(  A, \prec, \succ)$,
		\end{enumerate}
		where,  $l_{_\succ}+l_{_\prec}=l_{\cdot}, r_{_\succ}+r_{_\prec}=r_{\cdot}$.
	\end{pro}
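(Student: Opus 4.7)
The plan is to exploit three ingredients: direct substitution when zero maps appear, the semi-direct sum characterization from the preceding proposition, and the dualization identities $(PQ)^* = Q^*P^*$ and $[P^*, Q^*] = -[P, Q]^*$.

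For $(l_\succ, 0, 0, r_\prec, V)$, identities \eqref{eq_bimodule_pre_anti_flexible2}, \eqref{eq_bimodule_pre_anti_flexible4}, \eqref{eq_bimodule_pre_anti_flexible5} collapse to $0 = 0$ after zero substitution, while \eqref{eq_bimodule_pre_anti_flexible1} and \eqref{eq_bimodule_pre_anti_flexible3} are inherited verbatim from the original bimodule. The same observation handles the anti-flexible bimodule claim $(l_\succ, r_\prec, V)$ of (b): the two anti-flexible bimodule axioms \eqref{eqbimodule1} and \eqref{eqbimodule2} for $(l, r) = (l_\succ, r_\prec)$ are literally \eqref{eq_bimodule_pre_anti_flexible3} and \eqref{eq_bimodule_pre_anti_flexible1} (the latter after the symmetry $x \leftrightarrow y$). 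For $(l_\cdot, r_\cdot, V)$, the cleanest route is through the semi-direct sum: the preceding proposition endows $A \oplus V$ with a pre-anti-flexible structure whose underlying multiplication is $(x + u) \cdot' (y + v) = x \cdot y + l_\cdot(x) v + r_\cdot(y) u$, and by Theorem~\ref{thm_2} (specialised to $V$ as the trivial anti-flexible algebra) this precisely says that $(l_\cdot, r_\cdot, V)$ is a bimodule of $aF(A)$.

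The two starred bimodules of $aF(A)$ in (b) then follow from a general duality lemma that we establish along the way: if $(l, r, V)$ is a bimodule of an anti-flexible algebra $(A, \ast)$, then so is $(r^*, l^*, V^*)$. Swap $x \leftrightarrow y$ in \eqref{eqbimodule1} and dualise using $(PQ)^* = Q^*P^*$ to obtain the first dualised axiom, and use $[P^*, Q^*] = -[P, Q]^*$ for the commutator axiom \eqref{eqbimodule2}. Applying this principle to $(l_\cdot, r_\cdot, V)$ and $(l_\succ, r_\prec, V)$ yields the two starred bimodules $(r^*_\cdot, l^*_\cdot, V^*)$ and $(r^*_\prec, l^*_\succ, V^*)$ of (b).

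For the two dual pre-anti-flexible bimodules in (a): again $(r^*_\prec, 0, 0, l^*_\succ, V^*)$ kills \eqref{eq_bimodule_pre_anti_flexible2}, \eqref{eq_bimodule_pre_anti_flexible4}, \eqref{eq_bimodule_pre_anti_flexible5} by zero substitution, and \eqref{eq_bimodule_pre_anti_flexible1}, \eqref{eq_bimodule_pre_anti_flexible3} dualise to \eqref{eq_bimodule_pre_anti_flexible1} and \eqref{eq_bimodule_pre_anti_flexible3} of the original. The main obstacle is the full dual $(r^*_\prec, l^*_\prec, l^*_\succ, r^*_\succ, V^*)$, where all five identities must be dualised individually and matched against one of \eqref{eq_bimodule_pre_anti_flexible1}--\eqref{eq_bimodule_pre_anti_flexible5}. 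Each matching is a short computation, but the bookkeeping is delicate: one must track the reordering $(PQ)^* = Q^*P^*$ while simultaneously exploiting the reflection symmetries \eqref{eq_pre_antiflexible_1} and \eqref{eq_pre_antiflexible_2} built into the pre-anti-flexible axioms. A shortcut is to verify directly that $A \oplus V^*$ with the dual actions is pre-anti-flexible by pairing against arbitrary elements of $V$, which lets one appeal to the pre-anti-flexible axioms for $A$ instead of cross-referencing five separate identities.
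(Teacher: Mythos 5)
Your proposal is correct and, for most items, runs along the same lines as the paper: the zero-substitution argument for $(l_{_\succ},0,0,r_{_\prec},V)$ and $(r^*_{_\prec},0,0,l^*_{_\succ},V^*)$, the literal identification of \eqref{eq_bimodule_pre_anti_flexible3} and \eqref{eq_bimodule_pre_anti_flexible1} with \eqref{eqbimodule1} and \eqref{eqbimodule2} for $(l_{_\succ},r_{_\prec},V)$, and dualization for the starred quintuples are exactly what the paper does; your explicit duality lemma (the $x\leftrightarrow y$ swap absorbing the reversal from $(PQ)^*=Q^*P^*$) is precisely the content the paper compresses into the phrase ``using Eq.~\eqref{eq_dual_map}''. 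The one genuinely different step is $(l_{\cdot},r_{\cdot},V)$: the paper verifies \eqref{eqbimodule2} by combining \eqref{eq_bimodule_pre_anti_flexible1} with \eqref{eq_bimodule_pre_anti_flexible4}, and \eqref{eqbimodule1} by combining \eqref{eq_bimodule_pre_anti_flexible2}, \eqref{eq_bimodule_pre_anti_flexible3} and \eqref{eq_bimodule_pre_anti_flexible5}, whereas you route through the semidirect sum $A\oplus V$. That is cleaner, but Theorem~\ref{thm_2} is stated only in the direction ``matched pair $\Rightarrow$ algebra structure on the sum''; what you actually need is the converse in the special case where $V$ carries the zero product (anti-flexible structure on $A\oplus V$ $\Rightarrow$ bimodule), which is true and easy---extract the $V$-component of $(x,y,z+w)=(z+w,y,x)$---but should be stated rather than attributed to Theorem~\ref{thm_2}. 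Finally, for the full dual $(r^*_{_\prec},l^*_{_\prec},l^*_{_\succ},r^*_{_\succ},V^*)$ you describe the verification without performing it; the paper is equally terse here, so you are not omitting anything the paper supplies, but your claim that each dualized identity ``matches against one of'' \eqref{eq_bimodule_pre_anti_flexible1}--\eqref{eq_bimodule_pre_anti_flexible5} is optimistic: for instance the commutator axiom for that quintuple dualizes to $[r_{_\succ}(x),r_{_\prec}(y)]=[r_{_\succ}(y),r_{_\prec}(x)]$, which is not one of the five axioms by relabelling and requires a genuine combination of them, so the ``delicate bookkeeping'' you flag is where the real work of this item lives.
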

	\begin{proof}
		It is well known that 
		$aF(A)$ is an anti-flexible algebra.
		\begin{enumerate}
			\item
			Since $(l_{_\succ},r_{_\succ},  l_{_\prec}, r_{_\prec}, V)$ is a bimodule of the 
			pre-anti-flexible algebra $( A , \prec, \succ)$, then 
			the four linear maps $l_{_\succ}, l_{_\prec},r_{_\succ}, r_{_\prec}:
			A \rightarrow \End(V)$ satisfy 
			Eqs.~\eqref{eq_bimodule_pre_anti_flexible1}~-~\eqref{eq_bimodule_pre_anti_flexible5}
			which still satisfied by setting
			$ l_{_\prec}=0$ and $r_{_\succ}=0$. Thus $(l_{_\succ},0,  0, r_{_\prec}, V)$  is a 
			bimodule of the pre-anti-flexible algebra $( A , \prec, \succ)$.
			Furthermore, using Eq.~\eqref{eq_dual_map}, we deduce that 
			$(r^*_{_\prec}, l^*_{_\prec} , l^*_{_\succ}, r^*_{_\succ}  , V^*)$ and
			$(r^*_{_\prec}, 0 , 0,l^*_{_\succ}, V^*)$   are also
			bimodules of the pre-anti-flexible algebra $( A , \prec, \succ)$.
			\item 
			Since the four linear maps $l_{_\succ}, l_{_\prec},r_{_\succ}, r_{_\prec}: A \rightarrow \End(V)$ 
			satisfy Eqs.~\eqref{eq_bimodule_pre_anti_flexible1}~-~\eqref{eq_bimodule_pre_anti_flexible5},
			then both linear maps $l_{_\succ}$ and $r_{_\prec}$ satisfy
			Eqs.~\eqref{eq_bimodule_pre_anti_flexible1} and \eqref{eq_bimodule_pre_anti_flexible3} 
			which is exactly  Eqs.~\eqref{eqbimodule2} and \eqref{eqbimodule1}, respectively. 
			Thus $(l_{_\succ}, r_{_\prec},V)$ is a bimodule of 
			$aF(A)$. In view Eqs.~\eqref{eq_bimodule_pre_anti_flexible1} 
			and \eqref{eq_bimodule_pre_anti_flexible4}, 
			we have for any $x,y\in  A $,
			\begin{eqnarray*}
				[l_{_\cdot}(x), r_{_\cdot}(y)]-	[l_{_\cdot}(y), r_{_\cdot}(x)]&=&
				\{l_{_\prec}(x)r_{_\cdot}(y)+r_{_\succ}(x)l_{_\cdot}(y)-
				l_{_\succ}(y)r_{_\succ}(x)-r_{_\prec}(y)l_{_\prec}(x)  \}\cr 
				&-&\{ l_{_\prec}(y)r_{_\cdot}(x)+r_{_\succ}(y)l_{_\cdot}(x)-
				l_{_\succ}(x)r_{_\succ}(y)-r_{_\prec}(x)l_{_\prec}(y)  \}\cr 
				&+&\{[l_{_\succ}(x), r_{_\prec}(y)]-[l_{_\succ}(y), r_{_\prec}(x)]  \}=0.
			\end{eqnarray*}
			In addition,  considering Eqs.~\eqref{eq_bimodule_pre_anti_flexible2},
			\eqref{eq_bimodule_pre_anti_flexible3} and \eqref{eq_bimodule_pre_anti_flexible5},
			we have for any $x,y\in  A $,
			\begin{eqnarray*}
				l_{_\cdot}(x\cdot y)-l_{_\cdot}(x)l_{_\cdot}(y)-r_{_\cdot}(x)r_{_\cdot}(y)+r_{_\cdot}(y\cdot x)
				&=&\{l_{_\succ}(x\cdot y) -l_{_\succ}(x)l_{_\succ}(y)-r_{_\prec}(x)r_{_\prec}(y)\cr 
				&+&r_{_\prec}(y\cdot x)\}+ \{l_{_\prec}(x\succ y)-l_{_\succ}(x)l_{_\prec}(y) \cr 
				&+&r_{_\succ}(y\prec x)-r_{_\prec}(x)r_{_\succ}(y)  \}+\{l_{_\prec}(x\prec y) \cr 
				&+& r_{_\prec}(y\succ x)-l_{_\prec}(x)l_{_\cdot}(y)-r_{_\succ}(x)r_{_\cdot}(y)\}
				=0.
			\end{eqnarray*} 
			Therefore both $( l_{_\cdot}, r_{_\cdot}, V)$ and  $(l_{_\succ}, r_{_\prec}, V)$ 
			are  bimodules of $aF(A)$.
			According to  Eq.~\eqref{eq_dual_map}, both 
			$(r^*_{_\cdot}  , l^*_{_\cdot}  , V^*)$ and 
			$(r^*_{_\prec}  , l^*_{_\succ}  , V^*)$ are bimodules of  $aF(A)$. 
		\end{enumerate}
	\end{proof}
	\begin{ex}
		Consider a pre-anti-flexible algebra $(A, \prec, \succ)$.
		We have $( L_{_\prec}, R_{_\prec} , L_{_\succ}, R_{_\succ}  , A)$ and 
		$(L_{_\succ},0,  0, R_{_\prec}, A)$ are bimodules of
		$(A, \prec, \succ)$. Besides,  
		$(R^*_{_\prec}, L^*_{_\prec} , L^*_{_\succ}, R^*_{_\succ}  , A^*)$ and
		$(R^*_{_\prec}, 0 , 0, L^*_{_\succ}, A^*)$   
		are also bimodules of the pre-anti-flexible algebra $(  A, \prec, \succ)$.
	\end{ex}
	\begin{rmk}\label{rmk_useful}
		For a given bimodule $(l_{_\succ},r_{_\succ},  l_{_\prec}, r_{_\prec}, V)$ 
		of a pre-anti-flexible algebra $(A, \prec, \succ)$ we have
		\begin{enumerate}
			\item 
			If both side of 
			Eqs.~\eqref{eq_bimodule_pre_anti_flexible1}~-~\eqref{eq_bimodule_pre_anti_flexible5} 
			are zero i.e.
			the linear maps $l_{_\succ}, l_{_\prec},r_{_\succ}, r_{_\prec}$ satisfy
			\begin{eqnarray*}
				r_{_\prec}(x)l_{_\succ}(y)=l_{_\succ}(y)r_{_\prec}(x),\;
				l_{_\prec}(x\succ y)=l_{_\succ}(x)l_{_\prec}(y),\; 
				r_{_\prec}(x)r_{_\succ}(y)=r_{_\succ}(y\prec x),\\
				l_{_\succ}(x\cdot y)=l_{_\succ}(x)l_{_\succ}(y),\; 
				r_{_\prec}(x)r_{_\prec}(y)=r_{_\prec}(y\cdot x),\;
				r_{_\succ}(x)l_{\cdot}(y)=l_{_\succ}(y)r_{_\succ}(x),\\
				r_{_\prec}(y)l_{_\prec}(x)=l_{_\prec}(x)r_{\cdot}(y),\;
				r_{_\succ}(x)r_{\cdot}(y)=r_{_\succ}(y\succ  x),\;
				l_{_\prec}(x\prec y)=l_{_\prec}(x)l_{\cdot}(y),
			\end{eqnarray*}  
			with $l_{\cdot}=l_{_\succ}+l_{_\prec}$ and $r_{\cdot}=r_{_\succ}+r_{_\prec}$.
			\item 
			For any $x,y \in A$ we have 
			\begin{eqnarray}\label{eq:useful}
				L_{\cdot}(x)L_{\cdot}(y)+R_{\cdot}(x) R_{\cdot}(y)&=&
				(L_{\prec}(x)+L_{\succ}(x))(L_{\prec}(y)+L_{\succ}(y))+
				(R_{\prec}(x)+R_{\succ}(x))(R_{\prec}(y)+R_{\succ}(y))\cr 
				&=&(L_{\succ}(x)L_{\succ}(y)+R_{\prec}(x)R_{\prec}(y))+
				(L_{\succ}(x)L_{\prec}(y)+L_{\prec}(x)R_{\succ}(y))\cr
				&+&(L_{\prec}(x)L_{\succ}(y)+L_{\prec}(x)L_{\prec}(y)+
				(R_{\succ}(x)R_{\succ}(y)+R_{\succ}(x)L_{\prec}(y) )\cr 
				L_{\cdot}(x)L_{\cdot}(y)+R_{\cdot}(x) R_{\cdot}(y)&=&L_{\cdot}(x\cdot y)+R_{\cdot}(y\cdot x)
			\end{eqnarray}
			\item Besides, for any $x,y\in A$
			\begin{eqnarray}\label{eq:useful1}
				[L_{\cdot}(x), R_{\cdot}(y)]-[L_{\cdot}(y), R_{\cdot}(x)]&=&
				\{L_{_\prec}(x)R_{_\cdot}(y)+R_{_\succ}(x)L_{_\cdot}(y)-L_{_\succ}(y)R_{_\succ}(x)-R_{_\prec}(y)L_{_\prec}(x)  \}\cr 
				&-&\{ L_{_\prec}(y)R_{_\cdot}(x)+R_{_\succ}(y)L_{_\cdot}(x)-L_{_\succ}(x)R_{_\succ}(y)-R_{_\prec}(x)L_{_\prec}(y)  \}\cr 
				&+&\{[L_{_\succ}(x), R_{_\prec}(y)]-[L_{_\succ}(y), R_{_\prec}(x)]  \}=0
			\end{eqnarray}
			\item\label{dual-bimodule}
			Both dendriform and pre-anti-flexible algebras have the same shape of  dual bimodules.
			This fact induces some consequences which we will derive and explain in the following of this paper.
		\end{enumerate}
	\end{rmk}
	\begin{thm}\label{Theo_pre_Sum}
		Let $( A , \prec_{_A }, \succ_{_A })$  be a pre-anti-flexible algebra. Suppose there is a pre-anti-flexible algebra structure 
		"$ \prec_{_{  A^*}}, \succ_{_{A^*}}$" on  $ A^*$. The following statements are equivalent:
		\begin{enumerate}
			\item\label{1}  $(R^*_{\prec_{_  A}},L^*_{\succ_{_  A}}, R^*_{\prec_{_{  A^*}}}, L^*_{\succ_{_{ A^*}}}, A,  A^*)$ is a matched pair of anti-flexible algebras $aF(A)$ and $aF(A^*)$.
			\item\label{2} 	There exists an anti-flexible algebra structure on $ A\oplus A^*$ given by for any $x,y\in  A$ and for any $a,b\in  A^*$, 
			\begin{eqnarray}\label{eq_anti_flexible_sum}
				(x+a)\star(y+b)=
				(x\cdot y+R^*_{\prec_{_ A}}(a)y+L^*_{\succ_{_ A}}(b)x)+
				(a\circ b+R^*_{\prec_{_{ A^*}}}(x)b+L^*_{\succ_{_{ A^*}}}(y)a),
			\end{eqnarray} where $x\cdot y= x\prec_{_ A}y+ x\succ_{_ A} y$, $a\circ b=   a\prec_{_{ A^*}}b+a\succ_{_{ A^*}}b$, 
			and  a non-degenerate closed skew-symmetric  bilinear form $\omega$  defined on $ A\oplus A^*$ given by  
			for any $x,y\in  A$ and for any $a,b \in  A^*$, 
			\begin{eqnarray}\label{eq_skew_symmetric_form}
				\omega(x+a, y+b)=\langle x,b\rangle -\langle y,a\rangle.
			\end{eqnarray} 
		\end{enumerate} 
	\end{thm}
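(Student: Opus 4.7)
The plan is to invoke Theorem~\ref{thm_2} as the main bridge. By Proposition~\ref{prop_operation_bimodule_pre_anti_flexible}(b) applied to the pre-anti-flexible algebras $(A, \prec_A, \succ_A)$ and $(A^*, \prec_{A^*}, \succ_{A^*})$, the triples $(R^*_{\prec_A}, L^*_{\succ_A}, A^*)$ and $(R^*_{\prec_{A^*}}, L^*_{\succ_{A^*}}, A)$ are automatically bimodules of the underlying anti-flexible algebras $aF(A)$ and $aF(A^*)$, respectively. Hence the matched pair assertion in (1) is exactly the system of compatibility equations \eqref{eqq1}--\eqref{eqq4} of Theorem~\ref{thm_2} specialized to these dual bimodules, and Theorem~\ref{thm_2} then yields the equivalence between (1) and $(A \oplus A^*, \star)$ being an anti-flexible algebra under the product of Eq.~\eqref{eq_anti_flexible_sum}.

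It remains to check that the form $\omega$ of Eq.~\eqref{eq_skew_symmetric_form} is non-degenerate, skew-symmetric, and closed in the sense of Eq.~\eqref{eq:simplectic_form} with respect to $\star$. Non-degeneracy is immediate from the canonical duality between $A$ and $A^*$, and skew-symmetry is visible directly from $\omega(x+a, y+b) = \langle x, b\rangle - \langle y, a\rangle$.

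For closedness, I would proceed by case analysis of $\omega(u\star v, w) + \omega(v\star w, u) + \omega(w\star u, v)$ according to how many of $u, v, w$ lie in $A$ versus $A^*$. If all three lie in the same summand then each product stays in that summand and pairs to zero against the opposite one. In the case $u, v \in A$, $w \in A^*$ (and its cyclic rotations, which behave identically), expanding $u \star v \in A$, $v \star w \in A \oplus A^*$ and $w \star u \in A \oplus A^*$ via the product formula and rewriting each resulting pairing through the dualities $\langle L^*_{\succ_A}(x)c, y\rangle = \langle c, x\succ_A y\rangle$ and $\langle R^*_{\prec_A}(x)c, y\rangle = \langle c, y\prec_A x\rangle$, the three cyclic terms collapse to a pairing against $u\cdot v - u\prec_A v - u \succ_A v$, which vanishes by the definition of the underlying product $\cdot$. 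The symmetric case $u \in A$, $v, w \in A^*$ cancels by the analogous identity $b\circ c = b\prec_{A^*} c + b \succ_{A^*} c$.

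The main obstacle is purely the combinatorial bookkeeping in the mixed cases, since the product $\star$ contributes four cross-terms built from dual maps and each cross-term must be routed correctly into one slot of $\omega$. Once this is done systematically, the cancellations are forced by the defining relations $x\cdot y = x\prec_A y + x\succ_A y$ and $a\circ b = a\prec_{A^*} b + a\succ_{A^*} b$ alone, so closedness imposes no additional constraint beyond those already encoded in (1).
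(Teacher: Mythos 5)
Your proposal is correct and follows essentially the same route as the paper: both directions are reduced to Theorem~\ref{thm_2} via Proposition~\ref{prop_operation_bimodule_pre_anti_flexible}, and the closedness of $\omega$ is verified by a direct expansion of the cyclic sum in which the cross-terms are rewritten through the dualities $\langle L^*_{\succ_A}(x)c, y\rangle = \langle c, x\succ_A y\rangle$, $\langle R^*_{\prec_A}(x)c, y\rangle = \langle c, y\prec_A x\rangle$ and cancel using only $x\cdot y = x\prec_A y + x\succ_A y$ and $a\circ b = a\prec_{A^*} b + a\succ_{A^*} b$. The paper performs this cancellation in one general computation with arbitrary elements $x+a$, $y+b$, $z+c$ rather than by your case analysis on which summand each argument lies in, but by bilinearity these are the same verification.
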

	\begin{proof}
		Let $( A , \prec_{_A }, \succ_{_A })$  be a pre-anti-flexible algebra. 
		Suppose there is a pre-anti-flexible algebra structure 
		"$ \prec_{_{  A^*}}, \succ_{_{A^*}}$" on  $  A^*$.
		\begin{itemize}
			\item[$\eqref{1}\Longrightarrow \eqref{2}$]
			Suppose that $(R^*_{\prec_{_ A}},L^*_{\succ_{_ A}}, R^*_{\prec_{_{ A^*}}},L^*_{\succ_{_{ A^*}}}, A,  A^*)$ 
			is a matched pair of anti-flexible algebras $aF(A)$ and $aF(A^*)$.
			Then, there is an anti-flexible algebra structure "$\star$" on $ A\oplus A^*$ 
			given by for any $x,y\in  A$ and for any $a,b, \in  A^*$, 
			\begin{eqnarray*}
				(x+a)\star(y+b)=
				(x\cdot y+R^*_{\prec_{_ A}}(a)y+L^*_{\succ_{_ A}}(b)x)+
				(a\circ b+R^*_{\prec_{_{ A^*}}}(x)b+L^*_{\succ_{_{ A^*}}}(y)a), 
			\end{eqnarray*}
			where, $\forall x,y\in  A$, $\forall a,b\in  A^*$, 
			\begin{eqnarray*}
				x\cdot y= x\prec_{_ A}y+ x\succ_{_ A} y, a\circ b=   a\prec_{_{ A^*}}b+a\succ_{_{ A^*}}b.
			\end{eqnarray*}
			Besides, considering the skew-symmetric bilinear form $\omega$ defined on $ A\oplus A^*$ by 
			Eq.~\eqref{eq_skew_symmetric_form}, we have for any $x,y\in  A$ and for any $a,b\in  A^*$,
			\begin{eqnarray*}
				&&\omega((x+a)\star(y+b), (z+c))+\omega((y+b)\star(z+c), (x+a))+\omega((z+c)\star(x+a), (y+b))\cr 
				&&=
				\langle x\cdot y, c\rangle +\langle y, c\prec_{_{ A^*}} a\rangle +\langle x, b\succ_{_{ A^*}} c\rangle 
				-\langle z, a\circ b\rangle -\langle z\prec_{_ A} x, b\rangle  -\langle y\succ_{_{ A}} z, a\rangle \cr&&+
				\langle y\cdot z, a\rangle +\langle z, a\prec_{_{ A^*}} b\rangle +\langle y, c\succ_{_{ A^*}} a\rangle 
				-\langle x, b\circ c\rangle -\langle x\prec_{_ A} y, c\rangle -\langle z\succ_{_{ A}} x, b\rangle \cr&& 
				+\langle z\cdot y-x, b\rangle +\langle x, b\prec_{_{ A^*}} c\rangle +\langle z, a\succ_{_{ A^*}} b\rangle
				-\langle y, c\circ a\rangle -\langle y\prec_{_ A} z, a\rangle -\langle x\succ_{_{ A}} y, c\rangle =0.
			\end{eqnarray*}
			Clearly, $\omega$ is closed and $\omega( A,  A)=0=\omega( A^*, A^*)$, then $( A, \cdot)$ 
			and $( A^*, \circ)$ are Lagrangian anti-flexible subalgebras 
			of the anti-flexible algebra $( A\oplus A^*, \star)$.
			
			\item[$\eqref{2}\Longrightarrow \eqref{1}$]
			
			Suppose that there exists an anti-flexible algebra structure "$\star$" on $ A\oplus A^*$ given by Eq.~\eqref{eq_anti_flexible_sum}  
			and  a non-degenerate closed skew-symmetric  bilinear form on $ A\oplus A^*$ given by Eq.~\eqref{eq_skew_symmetric_form}. 
			According to Proposition~\ref{prop_operation_bimodule_pre_anti_flexible} the triple
			$(R^*_{\prec_{_ A}},L^*_{\succ_{_ A}},  A^*)$ is a bimodule 
			of $aF(A)$ and $(R^*_{\prec_{_{ A^*}}},L^*_{\succ_{_{ A^*}}},  A)$ is a bimodule of $aF(A^*)$, thus "$\star$" defines 
			an anti-flexible algebra structure on 
			$ A\oplus A^*$ if $(R^*_{\prec_{_ A}},L^*_{\succ_{_ A}},R^*_{\prec_{_{ A^*}}},L^*_{\succ_{_{ A^*}}},  A, A^*)$ is a 
			matched pair of the anti-flexible algebras $aF(A)$ and $aF(A^*)$. 
		\end{itemize}
		Therefore, holds the conclusion.
	\end{proof}
	
	\begin{thm}\label{thm_matchedpair}
		Let $( A, \prec_{_ A}, \succ_{_ A})$ and $(B, \prec_{_B}, \succ_{_B})$ be two pre-anti-flexible algebras.
		Suppose that there are four linear maps
		$ l_{_{\succ_ A}}, r_{_{\succ_ A}}, l_{_{\prec_ A}}, r_{_{\prec_ A}}: A\rightarrow \End(B)$ 
		such that $(l_{_{\succ_ A}}, r_{_{\succ_ A}}, l_{_{\prec_ A}}, r_{_{\prec_ A}}, B)$ is a bimodule of 
		$( A, \prec_{_ A}, \succ_{_ A})$ and another four linear maps 
		$ l_{_{\succ_B}}, r_{_{\succ_B}}, l_{_{\prec_B}}, r_{_{\prec_B}}:B\rightarrow \End( A)$ 
		such that $(l_{_{\succ_B}}, r_{_{\succ_B}}, l_{_{\prec_B}}, r_{_{\prec_B}},  A)$ is a bimodule of 
		$(B, \prec_{_B}, \succ_{_B})$. If in addition the eight linear maps 
		$l_{_{\succ_ A}}, r_{_{\succ_ A}}, l_{_{\prec_ A}},$ $r_{_{\prec_ A}}, 
		l_{_{\succ_B}}, r_{_{\succ_B}}, l_{_{\prec_B}}$, and $r_{_{\prec_B}}$
		satisfying the relations, for any $ x,y\in A$, and for any $a,b\in B$,
		\begin{subequations}
			\begin{eqnarray}\label{eq_matched_pre_1}
				(l_{_\succ{_{_B}}}(a)x)\prec_{_ A}y+  l_{_{\succ_{_B}}}(r_{_\succ{_ A}}(x)a)y-
				l_{_{\succ{_B}}}(a)(x\prec_{_ A} y)=\cr r_{_{\prec_{_B}}}(a)(y\succ_{_ A}x)-
				y\succ_{_ A}(r_{_{\prec_{_B}}}(a)x)-r_{_{\succ_{_B}}}(l_{_{\prec_{_ A}}}(x)a)y,
			\end{eqnarray}
			\begin{eqnarray}\label{eq_matched_pre_2}
				(l_{_{\succ_{_ A}}}(x)b)\prec_{_B}a+l_{_{\prec_{_ A}}}(r_{_{\succ_{_B}}}(b)x)a-
				l_{_{\succ_{_ A}}}(x)(b\prec_{_B}a)=\cr 
				r_{_{\prec_{_ A}}}(x)(a\succ_{_B} b)-a\succ_{_B}(r_{_{\prec_{_ A}}}(x)b)-
				r_{_{\succ_{_ A}}}(l_{_{\prec_{_B}}}(b)x)a,
			\end{eqnarray}
			\begin{eqnarray}\label{eq_matched_pre_3}
				(l_{_{\cdot_B}}(a)x )\succ_{_ A}y+l_{_{\succ_{_B}}}(r_{_{ \cdot_ A}}(x)a)y-
				l_{_{\succ_{B}}}(a)(x\succ_{_ A}y) =\cr
				r_{_{\prec_{_B}}}(a)(y\prec_{_ A} x)-y\prec_{_ A}(r_{\cdot_{_B}}(a)x)-
				r_{_{\prec_{_B}}}(l_{{\cdot_{_ A}}}(x)a)y,
			\end{eqnarray}
			\begin{eqnarray}\label{eq_matched_pre_4}
				r_{_{\succ_{B}}}(a)(x\cdot_{_ A} y)-x\succ{_{_ A}}(r_{_\succ{_B}}(a)y)-
				r_{_{_\succ{_B}}}(l_{_{\succ{_ A}}}(y)a)x =\cr
				(l_{_{\prec_{_B}}}(a)y)\prec_{_ A}x+l_{_{\prec_{_B}}}(r_{_{\prec_{_ A}}}(y)a)x-
				l_{_{\prec_{_B}}}(a)(y\cdot_{_ A} x), 
			\end{eqnarray}
			\begin{eqnarray}\label{eq_matched_pre_5}
				(l_{_{\cdot_ A}}(x)b )\succ_{_B}a+l_{_{\succ_{_ A}}}(r_{_{\cdot_B}}(b)x)a-
				l_{_{\succ_{ A}}}(x)(b\succ_{_B}a) =\cr
				r_{_{\prec_{_ A}}}(x)(a\prec_{_B} b)-a\prec_{_B}(r_{\cdot_{_ A}}(x)b)-
				r_{_{\prec_{_ A}}}(l_{{\cdot_{_B}}}(b)x)a,
			\end{eqnarray}
			\begin{eqnarray}\label{eq_matched_pre_6}
				r_{_{\succ_{ A}}}(x)(a\cdot_{_B} b)-a\succ{_{_B}}(r_{_\succ{_ A}}(x)b)-
				r_{_{_\succ{_ A}}}(l_{_{\succ{_B}}}(b)x)a=\cr
				(l_{_{\prec_{_ A}}}(x)b)\prec_{_ A}a+l_{_{\prec_{_ A}}}(r_{_{\prec_{_B}}}(b)x)a-
				l_{_{\prec_{_ A}}}(x)(b\cdot_{_B} a),
			\end{eqnarray}
			\begin{eqnarray}\label{eq_matched_pre_7}
				(r_{_{\succ_B}}(a)x)\prec_{_ A} y+l_{_{\prec_B}}( l_{_{\succ_ A}}(x)a)y
				-x\succ_{_ A}(l_{_{\prec_{_B}}}(a)y)-r_{_{\succ_{_B}}}(r_{_{\prec_{_ A}}}(y)a  )x=\cr 
				(r_{_{\succ_B}}(a)y)\prec_{_ A} x+l_{_{\prec_B}}( l_{_{\succ_ A}}(y)a)x
				-y\succ_{_ A}(l_{_{\prec_{_B}}}(a)x)-r_{_{\succ_{_B}}}(r_{_{\prec_{_ A}}}(x)a  )y,
			\end{eqnarray}
			\begin{eqnarray}\label{eq_matched_pre_8}
				(r_{_{\succ_ A}}(x)a)\prec_{_B} b+l_{_{\prec_ A}}( l_{_{\succ_B}}(a)x)b
				-a\succ_{_B}(l_{_{\prec_{_ A}}}(x)b)-r_{_{\succ_{_ A}}}(r_{_{\prec_{_B}}}(b)x  )a=\cr
				(r_{_{\succ_ A}}(x)b)\prec_{_B} a+l_{_{\prec_ A}}( l_{_{\succ_B}}(b)x)a
				-b\succ_{_B}(l_{_{\prec_{_ A}}}(x)a)-r_{_{\succ_{_ A}}}(r_{_{\prec_{_B}}}(a)x  )b, 
			\end{eqnarray}
			\begin{eqnarray}\label{eq_matched_pre_9}
				(r_{ \cdot_{_B}}(a)x)\succ_{_ A} y +l_{_{_\succ{_B}}}(l_{_{\cdot_{_ A}}}(x)a)y
				-x\succ_{_ A}(l_{_{\succ_{_B}}}(a)y)-r_{_{\succ_{_B}}}(r_{_{\succ_{_ A}}}(y)a)x=\cr 
				(r_{_{\prec{_B}}}(a)y)\prec_{_ A} x+l_{_{\prec_{_B}}}(l_{_{\prec_{_ A}}}(y) a)x
				-y\prec_{_ A}(l_{\cdot_{_B}} (a)x)-r_{_{\prec_{_B}}}(r_{\cdot_{_ A}}(x)a)y ,
			\end{eqnarray}
			\begin{eqnarray}\label{eq_matched_pre_10}
				(r_{\cdot_{_ A}}(x)a)\succ_{_B} b+l_{_{_\succ{_ A}}}(l_{_{\cdot_{_B}}}(a)x)b
				-a\succ_{_B}(l_{_{\succ_{_ A}}}(x)b)-r_{_{\succ_{_ A}}}(r_{_{\succ_{_B}}}(b)x)a=\cr 
				(r_{_{\prec{_ A}}}(x)b)\prec_{_B} a+ l_{_{\prec_{_ A}}}(l_{_{\prec_{_B}}}(b) x)a
				-b\prec_{_B}(l_{\cdot_{_ A}} (x)a)-r_{_{\prec_{_ A}}}(r_{\cdot_{_B}}(a)x)b,
			\end{eqnarray}
		\end{subequations}
		there is a pre-anti-flexible product on $ A\oplus B$ given by, for any  $x,y\in A$, and for any $a,b\in B$,
		\begin{eqnarray}\label{eq_pre_anti_matched}
			\begin{array}{cccc}
				(x+a)\prec (y+b)= \{x\prec_{_ A} y+l_{_{\prec_{_B}}}(a)y+r_{_{\prec_{_B}}}(b)x\}+
				\{a\prec_{_B} b+l_{_{\prec_{_ A}}}(x)b+r_{_{\prec_{_ A}}}(y)a\}, \cr
				(x+a)\succ (y+b)=
				\{ x\succ_{_ A} y+l_{_{\succ_{_B}}}(a)y+r_{_{\succ_{_B}}}(b)x\}+
				\{a\succ_{_B} b+l_{_{\succ_{_ A}}}(x)b+r_{_{\succ_{_ A}}}(y)a\}, 
			\end{array}
		\end{eqnarray}
		where for any $x,y, \in  A$ and any $a,b\in B$,  
		\begin{eqnarray*}
			x\cdot_{_ A} y=x\prec_{_ A} y+x\succ_{_ A} y,\quad  l_{ \cdot_{_ A}}
			=l_{_\prec{_{_ A}}}+l_{_\succ{_{_ A}}}, \quad r_{\cdot_{_ A}}
			=r_{_\prec{_{_ A}}}+r_{_\succ{_{_ A}}},\cr
			a\cdot_{_B} b=a\prec_{_B} b+a\succ_{_B} b,\quad l_{\cdot_{_B}}
			=l_{_\prec{_{_B}}}+l_{_\succ{_{_B}}}, \quad  r_{\cdot{_B}}
			=r_{_\prec{_{_B}}}+r_{_\succ{_{_B}}}.
		\end{eqnarray*}
	\end{thm}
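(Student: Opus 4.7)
The plan is to verify directly that the operations $\prec$ and $\succ$ on $A\oplus B$ defined by Eq.~\eqref{eq_pre_anti_matched} satisfy the pre-anti-flexible axioms~\eqref{eq_pre_antiflexible_1} and~\eqref{eq_pre_antiflexible_2}. By multilinearity it suffices to test each axiom on triples $(u,v,w)$ in which every argument lies entirely in $A$ or entirely in $B$, giving $2^3 = 8$ configurations per axiom. The two homogeneous configurations $(A,A,A)$ and $(B,B,B)$ reduce at once to the pre-anti-flexible axioms already known for $(A,\prec_{_A},\succ_{_A})$ and $(B,\prec_{_B},\succ_{_B})$, so only the mixed configurations require new work.

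The heart of the argument is a counting match. For axiom~\eqref{eq_pre_antiflexible_1}, the involution $u\leftrightarrow w$ partitions the six mixed configurations into the four orbits $\{(A,A,B),(B,A,A)\}$, $\{(A,B,A)\}$, $\{(A,B,B),(B,B,A)\}$, $\{(B,A,B)\}$, producing four independent compatibility identities. For axiom~\eqref{eq_pre_antiflexible_2}, the swap $u\leftrightarrow w$ also interchanges the forms $l$ and $r$, so every one of the six mixed configurations supplies an independent identity. The total $4+6=10$ should then correspond one-to-one with Eqs.~\eqref{eq_matched_pre_1}--\eqref{eq_matched_pre_10}.

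Concretely, I would fix a mixed configuration, for instance $u=x\in A$, $v=a\in B$, $w=y\in A$ in axiom~\eqref{eq_pre_antiflexible_1}, expand $(x,a,y)_{_m}$ and $(y,a,x)_{_m}$ via Eq.~\eqref{eq_pre_anti_matched}, and split each resulting associator into its $A$-component and its $B$-component. The bimodule axioms \eqref{eq_bimodule_pre_anti_flexible1}--\eqref{eq_bimodule_pre_anti_flexible5} applied to $(l_{_{\succ_A}},r_{_{\succ_A}},l_{_{\prec_A}},r_{_{\prec_A}},B)$ and to $(l_{_{\succ_B}},r_{_{\succ_B}},l_{_{\prec_B}},r_{_{\prec_B}},A)$ then cancel the ``intra-factor'' contributions, and what remains on each side is precisely one of the cross-factor identities Eqs.~\eqref{eq_matched_pre_1}--\eqref{eq_matched_pre_10}.

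The main obstacle is purely combinatorial bookkeeping: each associator expands into roughly twenty summands once Eq.~\eqref{eq_pre_anti_matched} is applied, and one must meticulously separate them into $A$- and $B$-components and identify each piece with its counterpart in the appropriate compatibility equation. Matching which of the ten hypotheses controls each configuration while respecting the $u\leftrightarrow w$ orbit structure of axiom~\eqref{eq_pre_antiflexible_1} (so that no identity is inadvertently duplicated or missed) is the only real subtlety; once that is organized, the verification is mechanical.
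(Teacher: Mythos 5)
Your proposal is correct and follows essentially the same route as the paper: a direct expansion of the associators $(\,\cdot\,,\cdot\,,\cdot\,)_{_m}$, $(\,\cdot\,,\cdot\,,\cdot\,)_{_l}$, $(\,\cdot\,,\cdot\,,\cdot\,)_{_r}$ on $A\oplus B$ via Eq.~\eqref{eq_pre_anti_matched}, after which the homogeneous (intra-factor) pieces are absorbed by the two bimodule hypotheses and the remaining cross terms match Eqs.~\eqref{eq_matched_pre_1}--\eqref{eq_matched_pre_10}. The paper simply expands general elements $x+a$, $y+b$, $z+c$ all at once rather than configuration by configuration, but your orbit count ($4$ identities from the $m$-symmetry plus $6$ from the $l$/$r$-symmetry, totalling the ten compatibility conditions) is the same bookkeeping in a cleaner organization.
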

	\begin{proof}
		
		Let $( A, \prec_{_ A}, \succ_{_ A})$ and $(B, \prec_{_B}, \succ_{_B})$ be two pre-anti-flexible algebras.
		Suppose in addition that  $(l_{_{\succ_ A}}, r_{_{\succ_ A}}, l_{_{\prec_ A}}, r_{_{\prec_ A}}, B)$ 
		is a bimodule of $( A, \prec_{_ A}, \succ_{_ A})$ and  $(l_{_{\succ_B}}, r_{_{\succ_B}}, l_{_{\prec_B}}, r_{_{\prec_B}},  A)$ 
		is a bimodule of $(B, \prec_{_B}, \succ_{_B})$, where 
		$ l_{_{\succ_B}}, r_{_{\succ_B}}, l_{_{\prec_B}}, r_{_{\prec_B}}:B\rightarrow \End( A)$ and 
		$ l_{_{\succ_ A}}, r_{_{\succ_ A}}, l_{_{\prec_ A}}, r_{_{\prec_ A}}: A\rightarrow \End(B)$ 
		are eight linear maps. Considering the product given in Eq.~\eqref{eq_pre_anti_matched}, we have
		for any $x,y, z\in  A$ and any $a,b,c\in B$, 
		\begin{eqnarray*}
			(x+a, y+b, z+c)_{_m}&=&(x,y,z)_{_m}+(a,b,c)_{_m}+
			\{l_{_{\prec_B}}(a\succ_{_B}b)z -l_{_\succ{_B}}(a)( l_{_{\prec_B}}(b)z) \}\cr
			&+&\{(l_{_\succ{_{_B}}}(a)y)\prec_{_ A}z+  l_{_\succ{_B}}(r_{_\succ{_ A}}(y)a)z-
			l_{_{\succ{_B}}}(a)(y\prec_{_ A} z) \}\cr 
			&+&\{r_{_{\prec_{_B}}}(c)(x\succ_{_ A} y)-x\succ_{_ A}(r_{_{\prec_{_B}}}(c)y)-
			r_{_{\succ_{_B}}}(l_{_{\prec_{_ A}}}(y)c)x  \}\cr 
			&+&\{ r_{_{\prec_{_ A}}}(z)(a\succ_{_B} b)-a\succ_{_B}(r_{_{\prec_{_ A}}}(z)b)-
			r_{_{\succ_{_ A}}}(l_{_{\prec_{_B}}}(b)z)a  \}\cr
			&+&\{ (l_{_{\succ_{_ A}}}(x)b)\prec_{_B}c+l_{_{\prec_{_ A}}}(r_{_{\succ_{_B}}}(b)x)c-
			l_{_{\succ_{_ A}}}(x)(b\prec_{_B}c) \}\cr 
			&+&\{r_{_{\prec_{_B}}}(c)(l_{_{\succ_{_B}}}(a)y)-
			l_{_{\succ_{_B}}}(a)(r_{_{\prec_{_B}}}(c)y)  \}+\{ (r_{_{\succ_B}}(b)x)\prec_{_ A} z
			\cr &+&l_{_{\prec_B}}( l_{_{\succ_ A}}(x)b)z
			-x\succ_{_ A}(l_{_{\prec_{_B}}}(b)z)
			-r_{_{\succ_{_B}}}(r_{_{\prec_{_ A}}}(z)b  )x \}\cr
			&+&\{ r_{_{\prec_{_ A}}}(z)(r_{_{\succ_{_ A}}}(y)a)-
			r_{_{\succ_{_ A}}}(y\prec_{_ A}z)a \}+
			\{ (r_{_{\succ_ A}}(y)a)\prec_{_B} c
			\cr&+&l_{_{\prec_ A}}( l_{_{\succ_B}}(a)y)c
			-a\succ_{_B}(l_{_{\prec_{_ A}}}(y)c)
			-r_{_{\succ_{_ A}}}(r_{_{\prec_{_B}}}(c)y  )a  \}\cr 
			&+&\{r_{_{\prec_{_B}}}(c)(r_{_{\succ_{_B}}}(b)x)-
			r_{_{\succ_{_B}}}(b\prec_{_B}c)x  \}\cr
			&+&\{ l_{_{\prec_ A}}(x\succ_{_ A}y)c -
			l_{_\succ{_ A}}(x)( l_{_{\prec_ A}}(y)c) \}+
			\{ r_{_{\prec_{_ A}}}(z)(l_{_{\succ_{_ A}}}(x)b)-
			l_{_{\succ_{_ A}}}(x)(r_{_{\prec_{_ A}}}(z)b) \}
		\end{eqnarray*}
		\begin{eqnarray*}
			(x+a, y+b, z+c)_{_l}&=&(x,y,z)_{_l}+(a,b,c)_{_l}+
			\{r_{_{\succ_{_A}}}(z)(r_{\cdot_{A}}(y)a)-r_{_{\succ_{_A}}}(y\succ_{_A} y)a \}\cr
			&+&\{(l_{_{\cdot_B}}(a)y )\succ_{_A}z+l_{_{\succ_{_B}}}(r_{_{\cdot_A}}(y)a)z-
			l_{_{\succ_{B}}}(a)(y\succ_{_A}z) \}\cr
			&+&\{r_{_{\succ_{B}}}(c)(x\ast_{_A} y)-x\succ{_{_A}}(r_{_\succ{_B}}(c)y)-
			r_{_{_\succ{_B}}}(l_{_{\succ{_A}}}(y)c)x  \}\cr 
			&+&\{(l_{_{\cdot_A}}(x)b )\succ_{_B}c+l_{_{\succ_{_A}}}(r_{_{\cdot_B}}(b)x)c-
			l_{_{\succ_{A}}}(x)(b\succ_{_B}c)  \}\cr
			&+&\{ r_{_{\succ_{A}}}(z)(a\cdot_{_B} b)-a\succ{_{_B}}(r_{_\succ{_A}}(z)b)-
			r_{_{_\succ{_A}}}(l_{_{\succ{_B}}}(b)z)a \}\cr
			&+&\{ l_{_\succ{_B}}(a\cdot_{_B} b)z-l_{_{\succ_{_B}}}(a)(l_{_{\succ_{_B}}}(b)z)\}+ 
			\{(r_{\cdot_{_B}}(b)x)\succ_{_A} z   \cr 
			&+&l_{_{_\succ{_B}}}(l_{_{\cdot_{_A}}}(x)b)z-x\succ_{_A}(l_{_{\succ_{_B}}}(b)z)-
			r_{_{\succ_{_B}}}(r_{_{\succ_{_A}}}(z)b)x \}\cr
			&+&\{  l_{_\succ{_A}}(x\cdot_{_A} y)c-l_{_{\succ_{_A}}}(x)(l_{_{\succ_{_A}}}(y)c) \}+ 
			\{  (r_{\cdot_{_A}}(y)a)\succ_{_B} c   \cr 
			&+&l_{_{_\succ{_A}}}(l_{_{\cdot_{_B}}}(a)y)c-a\succ_{_B}(l_{_{\succ_{_A}}}(y)c)-
			r_{_{\succ_{_A}}}(r_{_{\succ_{_B}}}(c)y)a \}\cr
			&+&\{ r_{_{\succ_{_B}}}(c)(l_{_\cdot{_B}}(a)y)-l_{_{_{\succ{_B}}}}(a)(r_{_{\succ{_B}}}(c)y) \}\cr 
			&+&\{ r_{_{\succ_{_B}}}(c)(r_{_\cdot{_B}}(b)x)-r_{_{\succ_{_B}}}(b\succ_{_B} c)x \}+
			\{  r_{_{\succ_{_A}}}(z)(l_{_\cdot{_A}}(x)b)-l_{_{_{\succ{_A}}}}(x)(r_{_{\succ{_A}}}(z)b)  \}
		\end{eqnarray*}
		\begin{eqnarray*}
			(z+c, y+b, x+a)_{_r}&=&(z,y,x)_{_r}+(c,b,a)_{_r}
			+\{r_{_{\prec_{_B}}}(a)(r_{_{\prec_{_B}}}(b)z)-r_{_{\prec_{_B}}}(b\ast_{_B} a)z  \}\cr 
			&+&\{(l_{_{\prec_{_B}}}(c)y)\prec_{_A}x+l_{_{\prec_{_B}}}(r_{_{\prec_{_A}}}(y)c)x-
			l_{_{\prec_{_B}}}(c)(y\cdot_{_A} x)  \}\cr
			&+&\{r_{_{\prec_{_B}}}(a)(z\prec_{_A} y)-z\prec_{_A}(r_{\cdot_{_B}}(a)y)-
			r_{_{\prec_{_B}}}(l_{{\cdot_{_A}}}(y)a)z  \}\cr 
			&+&\{ (l_{_{\prec_{_A}}}(z)b)\prec_{_A}a+l_{_{\prec_{_A}}}(r_{_{\prec_{_B}}}(b)z)a-
			l_{_{\prec_{_A}}}(z)(bAst_{_B} a)  \}\cr
			&+&\{ r_{_{\prec_{_A}}}(x)(c\prec_{_B} b)-c\prec_{_B}(r_{\cdot_{_A}}(x)b)-
			r_{_{\prec_{_A}}}(l_{{\cdot_{_B}}}(b)x)c  \}\cr
			&+&\{l_{_{\prec_{_B}}}(c\prec_{_B} b)x-l_{_{\prec_{_B}}}(c)(l_{_{\cdot_{_B}}}(b)c)  \}+
			\{(r_{_{\prec{_B}}}(b)z)\prec_{_A} x\cr
			&+&l_{_{\prec_{_B}}}(l_{_{\prec_{_A}}}(z) b)x-z\prec_{_A}(l_{\cdot_{_B}} (b)x)-
			r_{_{\prec_{_B}}}(r_{\cdot_{_A}}(x)b)z  \}\cr
			&+&\{r_{_{\prec{_B}}}(a)(l_{_{_\prec{_B}}}(c)y)-l_{_{\prec{_B}}}(c)(r_{_{\cdot{_B}}}(a)y)  \}+
			\{    (r_{_{\prec{_A}}}(y)c)\prec_{_B} a          \cr
			&+& l_{_{\prec_{_A}}}(l_{_{\prec_{_B}}}(c) y)a-c\prec_{_B}(l_{\cdot_{_A}} (y)a)-
			r_{_{\prec_{_A}}}(r_{\cdot_{_B}}(a)y)c  \}\cr
			&+&\{r_{_{\prec_{_A}}}(x)(r_{_{\prec_{_A}}}(y)c)-r_{_{\prec_{_A}}}(y\cdot_{_A} x)c   \}\cr 
			&+&\{l_{_{\prec_{_A}}}(z\prec_{_A} y)a-l_{_{\prec_{_A}}}(z)(l_{_{\cdot_{_A}}}(y)z)\}
			+\{r_{_{\prec{_A}}}(x)(l_{_{_\prec{_A}}}(z)b)-l_{_{\prec{_A}}}(z)(r_{_{\cdot{_A}}}(x)b)   \}
		\end{eqnarray*}
		Besides, for any $x,y, z\in A$ and for any $a,b,c\in B$, 
		$
		(x+a,y+b,z+c)_{_m}=(z+c,y+b,x+a)_{_m}
		$
		and 
		$
		(x+a,y+b,z+c)_{_l}=(z+c,y+b,x+a)_{_r}
		$ are equivalent to   
		$(l_{_{\succ_A}}, r_{_{\succ_A}}, l_{_{\prec_A}}, r_{_{\prec_A}}, B)$ is a bimodule of 
		$(A, \prec_{_A}, \succ_{_A})$,  $(l_{_{\succ_B}}, r_{_{\succ_B}}, l_{_{\prec_B}}, r_{_{\prec_B}}, A)$ 
		is a bimodule of $(B, \prec_{_B}, \succ_{_B})$ and 
		Eqs.~\eqref{eq_matched_pre_1}~-~\eqref{eq_matched_pre_10} are satisfied.
	\end{proof}
	\begin{defi}
		Let $(A, \prec_{_A}, \succ_{_A})$ and $(B, \prec_{_B}, \succ_{_B})$ be two 
		pre-anti-flexible algebras. Suppose that there are four linear maps
		$ l_{_{\succ_A}}, r_{_{\succ_A}}, l_{_{\prec_A}}, r_{_{\prec_A}}:A\rightarrow \End(B)$ such that 
		$(l_{_{\succ_A}}, r_{_{\succ_A}}, l_{_{\prec_A}}, r_{_{\prec_A}}, B)$ is a bimodule of 
		$(A, \prec_{_A}, \succ_{_A} )$ and  another four linear maps 
		$l_{_{\succ_B}}, r_{_{\succ_B}}, l_{_{\prec_B}}, r_{_{\prec_B}}:B\rightarrow \End(A)$ 
		such that 
		$(l_{_{\succ_B}}, r_{_{\succ_B}}, l_{_{\prec_B}}, r_{_{\prec_B}}, A)$ 
		is a bimodule of $(B, \prec_{_B}, \succ_{_B} )$ and
		Eqs.~\eqref{eq_matched_pre_1}~-~\eqref{eq_matched_pre_10} hold.
		Then we call the ten-tuple $(A, B,l_{_{\succ_A}}, r_{_{\succ_A}}, l_{_{\prec_A}},
		r_{_{\prec_A}},l_{_{\succ_B}}, r_{_{\succ_B}}, l_{_{\prec_B}}, r_{_{\prec_B}} )$ a 
		{\bf matched pair of the pre-anti-flexible} algebras $(A, \prec_{_A}, \succ_{_A})$ 
		and $(B, \prec_{_B}, \succ_{_B})$.
		We also denote the pre-anti-flexible algebra defined by Eq.~\eqref{eq_pre_anti_matched} by 
		$A\bowtie^{l_{_{\succ_A}}, r_{_{\succ_A}}, l_{_{\prec_A}}, 
			r_{_{\prec_A}}}_{l_{_{\succ_B}}, r_{_{\succ_B}}, l_{_{\prec_B}}, r_{_{\prec_B}}} B$ or simply by 
		$A \bowtie B$.
	\end{defi}
	\begin{cor}\label{corollary_matched_pair_pre}
		If 
		$(A, B, l_{_{\succ_A}}, r_{_{\succ_A}}, l_{_{\prec_A}}, r_{_{\prec_A}}, 
		l_{_{\succ_B}}, r_{_{\succ_B}}, l_{_{\prec_B}}, r_{_{\prec_B}} )$
		is  a matched pair of the pre-anti-flexible algebras 
		$(A, \prec_{_A}, \succ_{_A})$ and $(B, \prec_{_B}, \succ_{_B})$ then
		$(l_{_{\cdot_{_A}}}, r_{_{\cdot_{_A}}}, l_{_{\cdot_{_B}}},  r_{_{\cdot_{_B}}}, A, B  )$ 
		is a matched pair of anti-flexible algebras  $aF(A)$ and $aF(B)$.
	\end{cor}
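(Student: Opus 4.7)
The plan is to obtain the conclusion without a term-by-term calculation by lifting to the total space and then descending. Given the matched pair of pre-anti-flexible algebras, Theorem~\ref{thm_matchedpair} produces a pre-anti-flexible algebra $A\bowtie B$ on $A\oplus B$ with products $\prec,\succ$ defined by \eqref{eq_pre_anti_matched}. By Remark~\ref{rmk_1}\eqref{rmk_flex}, its underlying anti-flexible algebra $aF(A\bowtie B)$ has product $\star=\prec+\succ$, and direct expansion gives, for any $x,y\in A$ and $a,b\in B$,
\begin{eqnarray*}
(x+a)\star(y+b)&=&\bigl(x\cdot_{_A}y+l_{\cdot_{_B}}(a)y+r_{\cdot_{_B}}(b)x\bigr)+\bigl(a\cdot_{_B}b+l_{\cdot_{_A}}(x)b+r_{\cdot_{_A}}(y)a\bigr).
\end{eqnarray*}
This is precisely the form of the anti-flexible algebra structure attached to a matched pair of anti-flexible algebras in Theorem~\ref{thm_2}, with candidate bimodules $(l_{\cdot_{_A}},r_{\cdot_{_A}},B)$ over $aF(A)$ and $(l_{\cdot_{_B}},r_{\cdot_{_B}},A)$ over $aF(B)$.

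Next, invoke Proposition~\ref{prop_operation_bimodule_pre_anti_flexible}\eqref{eq:one}: these sum-bimodules are indeed bimodules of the underlying anti-flexible algebras. Since $aF(A\bowtie B)$ is an anti-flexible algebra containing $aF(A)$ and $aF(B)$ as subalgebras, and since the cross terms have exactly the bimodule form in Theorem~\ref{thm_2}, the compatibility equations \eqref{eqq1}--\eqref{eqq4} with $l_{_A},r_{_A},l_{_B},r_{_B}$ replaced by $l_{\cdot_{_A}},r_{\cdot_{_A}},l_{\cdot_{_B}},r_{\cdot_{_B}}$ are forced; this is the converse reading of the construction in Theorem~\ref{thm_2}, obtained by expanding $\bigl((x+a)\star(y+b)\bigr)\star(z+c)-(x+a)\star\bigl((y+b)\star(z+c)\bigr)$ and comparing with its reverse-order counterpart. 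Hence $(l_{\cdot_{_A}},r_{\cdot_{_A}},l_{\cdot_{_B}},r_{\cdot_{_B}},A,B)$ is a matched pair of $aF(A)$ and $aF(B)$.

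The main obstacle in this route is verifying that Theorem~\ref{thm_2} can be read as a characterization (both directions), so that existence of the anti-flexible structure of the displayed form on $A\oplus B$ does imply the matched pair axioms. If one prefers to avoid this point, the alternative is direct: sum appropriate pairs among \eqref{eq_matched_pre_1}--\eqref{eq_matched_pre_10} so that the $\prec/\succ$ components combine into $\cdot$. Concretely, adding \eqref{eq_matched_pre_1}, \eqref{eq_matched_pre_3} and \eqref{eq_matched_pre_4} (and an analogous triple on the $B$-side using \eqref{eq_matched_pre_2}, \eqref{eq_matched_pre_5}, \eqref{eq_matched_pre_6}) yields \eqref{eqq1} and \eqref{eqq2}, while adding \eqref{eq_matched_pre_7} and \eqref{eq_matched_pre_9} (respectively \eqref{eq_matched_pre_8} and \eqref{eq_matched_pre_10}) yields \eqref{eqq3} (respectively \eqref{eqq4}). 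This bookkeeping is tedious but mechanical, and the lifting argument above is what makes it conceptually immediate.
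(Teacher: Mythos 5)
Your argument is essentially the paper's own proof: the paper likewise invokes Proposition~\ref{prop_operation_bimodule_pre_anti_flexible} to conclude that $(l_{\cdot_{_A}},r_{\cdot_{_A}},B)$ and $(l_{\cdot_{_B}},r_{\cdot_{_B}},A)$ are bimodules of $aF(A)$ and $aF(B)$, and then observes that the underlying anti-flexible product of $A\bowtie B$ from Eq.~\eqref{eq_pre_anti_matched} is exactly the product attached to the candidate matched pair, so anti-flexibility of $A\bowtie B$ forces the matched-pair conditions. The converse reading of Theorem~\ref{thm_2} that you flag as the main obstacle is relied on just as implicitly by the paper, so your proposal is correct and follows the same route.
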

	\begin{proof}
		Suppose that $(A, B, l_{_{\succ_A}}, r_{_{\succ_A}}, l_{_{\prec_A}}, 
		r_{_{\prec_A}}, l_{_{\succ_B}}, r_{_{\succ_B}}, l_{_{\prec_B}}, r_{_{\prec_B}}  )$
		is  a matched pair of the pre-anti-flexible algebras $(A, \prec_{_A}, \succ_{_A})$ 
		and $(B, \prec_{_B}, \succ_{_B})$, where 
		$(l_{_{\succ_A}}, r_{_{\succ_A}}, l_{_{\prec_A}}, r_{_{\prec_A}}, B)$ 
		is a bimodule of the pre-anti-flexible algebra $(A,  \prec_{_A}, \succ_{_A})$ and 
		$(l_{_{\succ_B}}, r_{_{\succ_B}}, l_{_{\prec_B}}, r_{_{\prec_B}}, A)$ 
		is a bimodule of the pre-anti-flexible algebra $(B, \prec_{_B}, \succ_{_B})$. 
		According to Proposition~\ref{prop_operation_bimodule_pre_anti_flexible}, 
		$(l_{_{\cdot_{_A}}}, r_{_{\cdot_{_A}}}, B)$
		is a bimodule of the anti-flexible algebra $aF(A)$ and 
		$(l_{_{\cdot_{_B}}},  r_{_{\cdot_{_B}}}, A)$ is a bimodule of the anti-flexible algebra $aF(B)$.
		In addition, underlying anti-flexible product defined on 
		$A\oplus B$ in Eq.~\eqref{eq_pre_anti_matched}  is exactly that obtained from the matched pair  
		$(l_{_{\cdot_{_A}}}, r_{_{\cdot_{_A}}},l_{_{\cdot_{_B}}},  r_{_{\cdot_{_B}}}, A, B)$
		of anti-flexible algebra $aF(A)$ and $aF(B)$.
	\end{proof}
	\begin{thm}
		Let $(A, \prec_{_A}, \succ_{_A})$  be a pre-anti-flexible algebra. 
		Suppose there is a pre-anti-flexible algebra structure 
		"$ \prec_{_{A^*}}, \succ_{_{A^*}}$" on its dual space $A^*$. 
		The following statements are equivalent:
		\begin{enumerate}
			\item\label{un}
			$(A, A^*, R^*_{\prec_{_A}},L^*_{\succ_{_A}}, R^*_{\prec_{_{A^*}}}, L^*_{\succ_{_{A^*}}})$ 
			is a matched pair of anti-flexible algebras $aF(A)$ and $aF(A^*)$. 
			\item\label{deux}
			$(A, A^*, -R^*_{\succ_{_A}},-L^*_{\prec_{_A}} ,R^*_{\cdot_{_A}},L^*_{\cdot_{_A}} , 
			-R^*_{\succ_{_{A^*}}},-L^*_{\prec_{_{A^*}}}, 
			R^*_{\circ_{_{A^*}}}, L^*_{\circ_{_{A^*}}})$
			is a matched pair of the  pre-anti-flexible algebras $(A, \prec_{_A}, \succ_{_A})$ and 
			$(A^*, \prec_{_{A^*}}, \succ_{_{A^*}})$.
		\end{enumerate}
	\end{thm}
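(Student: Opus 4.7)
The plan is to translate between the two statements by passing through an auxiliary anti-flexible algebra structure on $A\oplus A^*$. Concretely, Theorem~\ref{Theo_pre_Sum} recasts (1) as the existence of an anti-flexible product $\star$ on $A\oplus A^*$ admitting the closed skew-symmetric form $\omega$ of \eqref{eq_skew_symmetric_form}, while Theorem~\ref{Theo_existance_pre_anti_flexible} converts such a pair $(\star,\omega)$ into a pre-anti-flexible algebra structure on $A\oplus A^*$. The strategy is to show that the pre-anti-flexible structure produced in this way is precisely the matched-pair structure in (2), and conversely that (2) always forces the anti-flexible matched pair of (1).

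For $(2)\Rightarrow(1)$, apply Corollary~\ref{corollary_matched_pair_pre}: the matched pair of pre-anti-flexible algebras in (2) descends to a matched pair of the underlying anti-flexible algebras, with bimodule maps $l_\cdot=l_\succ+l_\prec$ and $r_\cdot=r_\succ+r_\prec$. A short computation with the maps from (2) gives
\begin{align*}
l_{\cdot_A}&=-R^*_{\succ_A}+R^*_{\cdot_A}=R^*_{\prec_A},& r_{\cdot_A}&=-L^*_{\prec_A}+L^*_{\cdot_A}=L^*_{\succ_A},
\end{align*}
together with the symmetric identities $l_{\circ_{A^*}}=R^*_{\prec_{A^*}}$ and $r_{\circ_{A^*}}=L^*_{\succ_{A^*}}$. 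Hence the descended matched pair of anti-flexible algebras is exactly $(R^*_{\prec_A},L^*_{\succ_A},R^*_{\prec_{A^*}},L^*_{\succ_{A^*}},A,A^*)$, which is (1).

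For $(1)\Rightarrow(2)$, start from the anti-flexible algebra $(A\oplus A^*,\star)$ furnished by Theorem~\ref{Theo_pre_Sum} together with the form $\omega$, and apply Theorem~\ref{Theo_existance_pre_anti_flexible} to obtain a pre-anti-flexible structure on $A\oplus A^*$ determined by
\[
\omega(u\prec v,w)=\omega(u,v\star w),\qquad \omega(u\succ v,w)=\omega(v,w\star u).
\]
I would then expand these identities for the eight mixed combinations of $u,v,w$ drawn from $A$ and $A^*$, using the explicit formula \eqref{eq_anti_flexible_sum} for $\star$ and the pairing $\omega(x+a,y+b)=\langle x,b\rangle-\langle y,a\rangle$. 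This shows, first, that the induced product restricts to $(\prec_A,\succ_A)$ on $A$ and to $(\prec_{A^*},\succ_{A^*})$ on $A^*$, and, second, that the cross-actions of $A$ on $A^*$ coincide with $(-R^*_{\succ_A},-L^*_{\prec_A},R^*_{\cdot_A},L^*_{\cdot_A})$, with the symmetric identification for $A^*$ on $A$. Feeding this information into Theorem~\ref{thm_matchedpair} then packages the data as a matched pair of pre-anti-flexible algebras, completing (2).

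The main obstacle is the bookkeeping in the second half: the signs $-R^*_\succ$ and $-L^*_\prec$ arise from the skew-symmetry of $\omega$ swapping the two slots, while the full anti-flexible products $\cdot_A$ and $\circ_{A^*}$ appear because pairing $\omega$ against a vector in the opposite summand sees the whole left or right multiplication rather than its $\prec$- or $\succ$-summand alone. Organising the eight elementary cases without sign errors is the only delicate point; once this is done, Theorems~\ref{Theo_pre_Sum}, \ref{Theo_existance_pre_anti_flexible} and \ref{thm_matchedpair} together with Corollary~\ref{corollary_matched_pair_pre} provide all the structural machinery and cross-check the result against the already-proved direction $(2)\Rightarrow(1)$.
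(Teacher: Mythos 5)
Your proposal is correct and follows essentially the same route as the paper: the direction $(2)\Rightarrow(1)$ is Corollary~\ref{corollary_matched_pair_pre} together with the observation that the sums of the cross-action maps collapse to $R^*_{\prec}$ and $L^*_{\succ}$, and the direction $(1)\Rightarrow(2)$ composes Theorem~\ref{Theo_pre_Sum} with Theorem~\ref{Theo_existance_pre_anti_flexible} and reads off the induced products on $A\oplus A^*$ via $\omega$, exactly as in the paper's computation.
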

	\begin{proof}
		Let $(A, \prec_{_A}, \succ_{_A})$  be a pre-anti-flexible algebra. 
		Suppose there is a pre-anti-flexible algebra structure 
		"$ \prec_{_{A^*}}, \succ_{_{A^*}}$" on its dual space $A^*$.
		According to Corollary~\ref{corollary_matched_pair_pre}, 
		we have $\eqref{deux} \Longrightarrow \eqref{un}$.
		
		If $(A, A^*, R^*_{\prec_{_A}},L^*_{\succ_{_A}}, R^*_{\prec_{_{A^*}}}, L^*_{\succ_{_{A^*}}})$ 
		is a matched pair of anti-flexible algebras $aF(A)$ and $aF(A^*)$, 
		According to Theorem~\ref{Theo_pre_Sum}, 
		there exists an anti-flexible algebra structure on $A\oplus A^*$ given by Eq.~\eqref{eq_anti_flexible_sum}
		and  a non-degenerate closed skew-symmetric  bilinear form on $A\oplus A^*$ 
		given by  Eq.~\eqref{eq_skew_symmetric_form}. In view of Theorem~\ref{Theo_existance_pre_anti_flexible}, 
		there exists  a pre-anti-flexible algebra structure "$\prec, \succ$" defined on 
		$A\oplus A^*$ and satisfying Eq.~\eqref{eq_useful1} 
		i.e. for any $x,y, z\in A$ and for any $a, b, c\in A^*$, 
		\begin{eqnarray*}
			\omega((x+a)\prec (y+b), (z+c))=\omega((x+a), (y+b)\star(z+c) ),\cr
			\omega((x+a)\succ (y+b), (z+c))=\omega((y+b), (z+c)\star(x+a)),
		\end{eqnarray*}
		where "$\star$" is given by Eq.~\eqref{eq_anti_flexible_sum}.
		More precisely, we have for any $x,y,z\in A$ and for any $a, b, c\in A^*$,
		\begin{eqnarray*}
			\omega(x+a, (y+b)\star(z+c) )&=& 
			\omega(x+a,    (y\cdot z+R^*_{\prec_{_A}}(b)z+L^*_{\succ_{_A}}(c)y)\cr&+&
			(b\circ c+R^*_{\prec_{_{A^*}}}(y)c+L^*_{\succ_{_{A^*}}}(z)b))\cr &=&
			\langle x,b\circ c+R^*_{\prec_{_{A^*}}}(y)c+L^*_{\succ_{_{A^*}}}(z)b) \rangle 
			\cr&-&\langle  y\cdot z+R^*_{\prec_{_A}}(b)z+L^*_{\succ_{_A}}(c)y, a\rangle
			\cr &=&\langle  x, b\circ c\rangle+
			\langle x\prec_{_{A}}y, c\rangle+
			\langle z\succ_{_{A}}x, b\rangle
			-\langle  y\cdot z, a\rangle\cr
			&-&\langle  z, a\prec_{_{_{A^*}}}b\rangle
			-\langle  y, c\succ_{_{_{A^*}}}a\rangle\cr
			&=&\langle  x\prec_{_{A}}y+L_{\circ}^*(b)x-R_{_{_{\succ_{A^*}}}}^*(a)y, c\rangle
			\cr&-&
			\langle  z,  a\prec_{_{_{A^*}}}b+L_{\cdot}^*(y)a-R_{_{\succ_{A}}}^*(x)b\rangle \cr
			&=&\omega((x\prec_{_{A}}y-R_{_{_{\succ_{A^*}}}}^*(a)y+L_{\circ}^*(b)x)\cr&+&
			( a\prec_{_{_{A^*}}}b-R_{_{\succ_{A}}}^*(x)b+L_{\cdot}^*(y)a), z+c).
		\end{eqnarray*}
		Thus
		\begin{eqnarray*}
			(x+a)\prec (y+b)
			=(x\prec_{_{A}}y-R_{_{_{\succ_{A^*}}}}^*(a)y+L_{\circ}^*(b)x)
			+( a\prec_{_{_{A^*}}}b-R_{_{\succ_{A}}}^*(x)b+L_{\cdot}^*(y)a).
		\end{eqnarray*}
		Similarly, we have
		\begin{eqnarray*}
			(x+a)\succ (y+b)=(x\prec{_{_A}}y+R_{\circ}^*(a)y-L_{_{\prec_{A^*}}}^*(b)x)+
			(a\prec{_{_{A^*}}}b+R_{\cdot}^*(x)b-L_{_{\prec_{A}}}^*(y)a)
		\end{eqnarray*}
		Therefore, $( A, A^*, -R_{\succ_{_A}}^*,-L^*_{\prec_{_A}} ,R^*_{\cdot},L^*_{\cdot} , 
		-R^*_{\succ_{_{A^*}}}, -L^*_{\prec_{_{A^*}}}, 
		R^*_{\circ}, L^*_{\circ})$
		is a matched pair of the pre-anti-flexible algebras $(A, \prec_{_A}, \succ_{_A})$ and 
		$(A^*,  \prec_{_{A^*}}, \succ_{_{A^*}})$. Hence \eqref{un} $\Longrightarrow$ \eqref{deux}
	\end{proof}
	\section{Pre-anti-flexible bialgebras}\label{section3}
	In this section, we are going to provide the definition of  a pre-anti-flexible bialgebra 
	and provide they equivalent notions previously announced. 
	To achieve this goal, we have
	\begin{thm}
		Let $(A, \prec_{_A}, \succ_{_A})$  be a pre-anti-flexible algebra whose products are given 
		by two linear maps 
		$\beta_{_{\succ}}^*, \beta_{_{\prec}}^*: A\otimes A\rightarrow A$. 
		Suppose in addition that there is a pre-anti-flexible algebra structure 
		"$ \prec_{_{A^*}}, \succ_{_{A^*}}$" on $A^*$ given by: 
		$\Delta_{_{\succ}}^*, \Delta_{_{\prec}}^*: A^*\otimes A^*\rightarrow A^*$. 
		Then the following relations are equivalent:
		\begin{enumerate}
			\item  $(A, A^*, R^*_{\prec_{_A}},L^*_{\succ_{_A}}, R^*_{\prec_{_{A^*}}}, L^*_{\succ_{_{A^*}}})$ 
			is  matched pair of anti-flexible algebras $aF(A)$ and $aF(A^*)$.
			\item
			The fourth linear maps $\beta_{_{\succ}}, \beta_{_{\prec}}:A^*\rightarrow A^*\otimes A^*$ and 
			$\Delta_{_{\succ}}, \Delta_{_{\prec}}:A\rightarrow A\otimes A$ satisfying
			for any $x,y\in A$ and for any $a,b\in A^*$,
			\begin{subequations}
				\begin{eqnarray}\label{eq_pre_matched_1}
					\begin{array}{llll}
						\Delta_{_{\succ}}(x\cdot y)-(R_{_{\prec_A}}(y)\otimes \id)\Delta_{_{\succ}}(x)-
						(\id\otimes L_{\cdot}(x))\Delta_{_{\succ}}(y)=\cr  
						\sigma(\id \otimes L_{_{\succ_{A}}}(y))\Delta_{_{\prec}}(x)+
						\sigma(R_{\cdot}(x)\otimes \id)\Delta_{_{\prec}}(y)-\sigma \Delta_{_{\prec}}(y\cdot x),
					\end{array}
				\end{eqnarray}
				\begin{eqnarray}\label{eq_pre_matched_3}
					\begin{array}{llll}
						&&(\sigma(L_{\cdot}(y)\otimes \id-\id\otimes R_{_{\prec_{_A}}}(y)))\Delta_{_{\prec}}(x)
						+(L_{_{\succ_{A}}}(x)\otimes\id-\id\otimes R_{\cdot}(x))\Delta_{_{\succ}}(y)=\cr&&
						(\sigma(L_{\cdot}(x)\otimes\id  -\id\otimes R_{_{\prec_{_A}}}(x) ) )\Delta_{_{\prec}}(y)+
						(L_{_{\succ_{_{A}}}}(y)\otimes\id -\id\otimes R_{{\cdot}}(y))\Delta_{_{\succ}}(x),
					\end{array}
				\end{eqnarray}
				\begin{eqnarray}\label{eq_pre_matched_2}
					\begin{array}{llll}
						\beta_{_{\succ}}(a\circ b)-
						(R_{_{{\prec_{A^*}}}}(b)\otimes \id)\beta_{_{\succ}}(a)-
						(\id\otimes L_{\circ}(a))\beta_{_{\succ}}(b)=\cr
						\sigma(\id \otimes L_{_{\succ_{A^*}}}(b))\beta_{_{\prec}}(a)+
						\sigma(R_{\circ}(a)\otimes \id)\beta_{_{\prec}}(b)-\sigma \beta_{_{\prec}}(b\circ a),
					\end{array}
				\end{eqnarray}
				\begin{eqnarray}\label{eq_pre_matched_4}
					\begin{array}{llll}
						&&(\sigma(L_{\circ}(b)\otimes \id-\id\otimes R_{_{\prec_{A^*}}}(b))\beta_{_{\prec}}(a)+
						(L_{_{\succ_{A^*}}}(a)\otimes\id-\id\otimes R_{\circ}(a))\beta_{_{\succ}}(b)=\cr &&
						(\sigma(L_{\circ}(a)\otimes\id -\id\otimes R_{_{\prec_{A^*}}}(a)  ) )\beta_{_{\prec}}(b)+
						(L_{_{\succ_{A^*}}}(b)\otimes\id-\id\otimes R_{{\circ}}(b) )\beta_{_{\succ}}(a),
					\end{array}
				\end{eqnarray}
			\end{subequations}
			where
			$R_{\cdot}=R_{_{\succ_{A}}}+R_{_{\prec_{A}}}$, 
			$L_{\cdot}=L_{_{\succ_{A}}}+L_{_{\prec_{A}}}$, 
			$R_{\circ}=R_{_{\succ_{A^*}}}+R_{_{\prec_{A^*}}}$ and 
			$L_{\circ}=L_{_{\succ_{A^*}}}+L_{_{\prec_{A^*}}}.$  
		\end{enumerate}
	\end{thm}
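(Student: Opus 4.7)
The plan is to prove the equivalence by direct dualization. By Theorem~\ref{thm_2}, condition~(1) unfolds as the four matched-pair axioms \eqref{eqq1}--\eqref{eqq4} specialized to the bimodules $(l_{_A},r_{_A})=(R^*_{\prec_{_A}},L^*_{\succ_{_A}})$ of $aF(A)$ acting on $A^*$ and $(l_{_B},r_{_B})=(R^*_{\prec_{_{A^*}}},L^*_{\succ_{_{A^*}}})$ of $aF(A^*)$ acting on $A$. Condition~(2) is a set of four tensor identities for the coproducts $\Delta_{_{\succ}},\Delta_{_{\prec}}:A\to A\otimes A$ and $\beta_{_{\succ}},\beta_{_{\prec}}:A^*\to A^*\otimes A^*$. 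By hypothesis these coproducts are dual to the pre-anti-flexible products prescribed on $A^*$ and $A$, so the task reduces to showing that each of \eqref{eqq1}--\eqref{eqq4}, after pairing against a test element, is equivalent to the corresponding one of \eqref{eq_pre_matched_1}--\eqref{eq_pre_matched_4}.

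I would handle \eqref{eqq1} first. For $x,y\in A$ and $a\in A^*$, the left-hand side lies in $A$, so pair it with an arbitrary $b\in A^*$. The basic transcription rules are $\langle R^*_{\prec_{_A}}(x)a,y\rangle=\langle a,y\prec_{_A} x\rangle$, $\langle L^*_{\succ_{_A}}(x)a,y\rangle=\langle a,x\succ_{_A} y\rangle$, $\langle\Delta_{_{\prec}}(x),a\otimes b\rangle=\langle x,a\prec_{_{A^*}} b\rangle$, $\langle\Delta_{_{\succ}}(x),a\otimes b\rangle=\langle x,a\succ_{_{A^*}} b\rangle$, together with the analogous formulas for $\beta_{_{\prec}},\beta_{_{\succ}}$. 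Substituting each of the six summands of \eqref{eqq1} term by term produces the six summands of \eqref{eq_pre_matched_1}: the contributions coming from $l_{_B}$-type actions on products $x\cdot y$ yield $\Delta_{_{\succ}}(x\cdot y)$ and $(R_{_{\prec_{_A}}}(y)\otimes\id)\Delta_{_{\succ}}(x)$ on the left-hand side, while those coming from $r_{_B}$-type actions on the swapped product $y\cdot x$ produce the $\sigma$-twisted counterparts on the right-hand side, because the roles of the two tensor slots are interchanged when the action acts on the right. The treatment of \eqref{eqq3} is entirely analogous and yields \eqref{eq_pre_matched_3}; here the intrinsic $x\leftrightarrow z$ symmetry of \eqref{eqq3} translates precisely into the $x\leftrightarrow y$ symmetry equating the two sides of \eqref{eq_pre_matched_3}. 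The identities \eqref{eq_pre_matched_2} and \eqref{eq_pre_matched_4} then follow by the same procedure applied to \eqref{eqq2} and \eqref{eqq4}, which are merely \eqref{eqq1} and \eqref{eqq3} with the roles of $A$ and $A^*$ exchanged.

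The hard part will be the bookkeeping: each of \eqref{eqq1}--\eqref{eqq4} contains six terms involving four distinct multiplication operators of $\prec$- and $\succ$-types, and the dualized identities require the flip $\sigma$ to be inserted on exactly the right places. My strategy will be to verify the one equivalence \eqref{eqq1}$\Leftrightarrow$\eqref{eq_pre_matched_1} in full detail, laying out a clean termwise correspondence, and then invoke the $A\leftrightarrow A^*$ symmetry (and $\prec\leftrightarrow\succ$ where appropriate) to dispatch the remaining cases without repeating the calculation. Non-degeneracy of the canonical pairing $\langle\cdot,\cdot\rangle$ in finite dimension guarantees that each tensor identity holds if and only if its fully contracted scalar form does, and the four scalar equivalences assemble into the claimed equivalence of (1) and (2).
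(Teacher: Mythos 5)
Your proposal is correct and follows essentially the same route as the paper: both proofs dualize each of the matched-pair axioms \eqref{eqq1}--\eqref{eqq4} by pairing against test elements $a\otimes b$ (respectively $x\otimes y$), using the transcription rules $\langle R^*_{\prec_{_A}}(x)a,y\rangle=\langle a,y\prec_{_A}x\rangle$, $\langle\Delta_{_{\prec}}(x),a\otimes b\rangle=\langle x,a\prec_{_{A^*}}b\rangle$, etc., to match terms one-for-one with \eqref{eq_pre_matched_1}--\eqref{eq_pre_matched_4}, with the flip $\sigma$ arising exactly as you describe from right actions on reversed products. The paper likewise carries out the termwise bookkeeping only for \eqref{eqq1} and \eqref{eqq3} and dispatches \eqref{eqq2} and \eqref{eqq4} by the $A\leftrightarrow A^*$ symmetry, as you propose.
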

	\begin{proof}
		According to Remark~\ref{rmk_1}, $(R^*_{\prec_{_A}},L^*_{\succ_{_A}},  A^*)$ and 
		$(R^*_{\prec_{_{A^*}}}, L^*_{\succ_{_{A^*}}},A)$ are bimodules of $aF(A)$ and $aF(A^*)$, respectively.
		Taking into account the following, for any $x,y\in A$ and any $a,b\in A^*$
		\begin{eqnarray*}
			&&\langle \sigma\circ \Delta_{_{\prec}}(x\cdot y), a\otimes b \rangle=
			\langle \Delta_{_{\prec}}(x\cdot y), b\otimes a \rangle=
			\langle x\cdot y, R_{_{\prec_{A^*}}}(a)b \rangle=
			\langle  R_{_{\prec_{A^*}}}^*(a)(x\cdot y), b \rangle, \cr 
			&& \langle \Delta_{_{\succ}}(y\cdot x), a\otimes b \rangle=
			\langle y\cdot x, a\succ_{_{A^*}} b  \rangle=
			\langle y\cdot x, L_{_{_{\succ_{A^*}}}}(a)b \rangle=
			\langle L_{_{{\succ_{A^*}}}}^*(a)(y\cdot x), b \rangle, \cr
			&&\langle (R_{_{\prec_A}}(x)\otimes \id)\Delta_{_{\succ}}(y), a\otimes b  \rangle=
			\langle \Delta_{_{\succ}}(y),R_{_{\prec_A}}^*(x)a\otimes b   \rangle=
			\langle L_{_{\succ_{A^*}}}^*(R_{_{\prec_A}}^*(x)a)y, b  \rangle,  \cr 
			&&\langle (\id\otimes L_{\cdot}(y))\Delta_{_{\succ}}(x), a\otimes b  \rangle=
			\langle x, a\succ_{_{A^*}}(L_{\cdot}^*(y)b)   \rangle=
			\langle L_{_{_{\succ_{A^*}}}}^*(a)x ,L_{\cdot}^*(y)b  \rangle=
			\langle y\cdot (L_{_{{\succ_{A^*}}}}^*(a)x) , b \rangle, \cr 
			&&\langle \sigma(\id \otimes L_{_{\succ_{A}}}(x))\Delta_{_{\prec}}(y), a\otimes b  \rangle=
			\langle y, b\prec_{_{A^*}} (L_{_{\succ_{_{A}}}}^*(x)a)  \rangle=
			\langle R_{_{\prec_{_{A^*}}}}^*(L_{_{\succ_{_{A}}}}^*(x)a)y, b \rangle, \cr
			&&\langle \sigma(R_{\cdot}(y)\otimes \id)\Delta_{_{\prec}}(x), a\otimes b  \rangle=
			\langle x, (R_{\cdot}^*(y)b)\prec_{_{A^*}} a   \rangle=
			\langle R_{_{{\prec_{_{A^*}}}}}^*(a)x ,R_{\cdot}^*(y)b \rangle=
			\langle (R_{_{{\prec_{_{A^*}}}}}^*(a)x)\cdot y ,b \rangle,
		\end{eqnarray*}
		we deduce that Eq.~\eqref{eq_pre_matched_1} is equivalent to Eq.~\eqref{eqq1}. 
		
		
		Similarly, we get  equivalence between Eqs.~\eqref{eq_pre_matched_2}  and \eqref{eqq2}.
		
		Besides, taking into account the following,
		\begin{eqnarray*}
			&&\langle (\sigma(L_{\cdot}(y)\otimes \id)\Delta_{_{\prec}}(x),  a\otimes b\rangle=
			\langle x, (L_{\cdot}^*(y)b) \prec_{_{A^*}} a \rangle=
			\langle R_{\prec_{_{A^*}}}^*(a)x, L_{\cdot}^*(y)b \rangle=
			\langle y\cdot (R_{\prec_{_{A^*}}}^*(a)x) ,b\rangle,	\cr 
			&&\langle (\sigma(\id\otimes R_{_{\prec_{_A}}}(y)))\Delta_{_{\prec}}(x), a\otimes b\rangle=
			\langle x, b\prec_{_{A^*}} (R_{_{\prec_{_A}}}^*(y)a)\rangle=
			\langle R_{\prec_{_{A^*}}}^*(R_{_{\prec_{_A}}}^*(y)a)x,b\rangle,	\cr 
			&&\langle (L_{_{\succ_{_{A}}}}(y)\otimes\id) \Delta_{_{\succ}}(x), a\otimes b\rangle=
			\langle x, (L_{_{\succ_{_{A}}}}^*(y)a)\succ_{_{ A^*}} b \rangle=
			\langle L_{\succ_{_{ A^*}}}^*(L_{_{\succ_{_{A}}}}^*(y)a)x,b\rangle,	\cr 
			&&\langle (\id\otimes R_{{\cdot}}(y))\Delta_{_{\succ}}(x),  a\otimes b\rangle=
			\langle x, a \succ_{_{ A^*}} (R_{{\cdot}}^*(y)b) \rangle=
			\langle L_{\succ_{_{ A^*}}}^*(a)x, R_{{\cdot}}^*(y)b \rangle=
			\langle (L_{\succ_{_{ A^*}}}^*(a)x)\cdot y,b\rangle,
		\end{eqnarray*}
		we deduce  equivalence between Eqs.~\eqref{eq_pre_matched_3}  and \eqref{eqq3}.

		Similarly, we get equivalence between Eqs.~\eqref{eq_pre_matched_4} and \eqref{eqq4}.
	\end{proof}
	\begin{rmk}\label{rmk:identities}
		Let $x,y\in A$ and $a, b\in A^*$.  Setting by 
		$\Delta=\Delta_{_\prec}+\Delta_{_\succ}$ we have the following
		\begin{eqnarray*}
			\langle \beta_{_\succ}(a\circ b), x\otimes y \rangle=
			\langle a\otimes b,\Delta (x\succ y) \rangle, \quad
			\langle \sigma\beta_{_\prec}(b\circ a), x\otimes y \rangle=
			\langle a\otimes b, \sigma\Delta(y\prec x)\rangle,
		\end{eqnarray*}
		\begin{eqnarray*}
			\langle( R_{\prec{_{A^*}}}(b)\otimes \id )\beta_{_\succ}(a), x\otimes y \rangle=
			\langle a\otimes b, (R_{_{\succ}}(y)\otimes \id)\Delta_{_{\prec}}(x) \rangle,
		\end{eqnarray*}
		\begin{eqnarray*}
			\langle (\id \otimes L_{\circ}(a))\beta_{{_\succ}}(b), x\otimes y\rangle=
			\langle a\otimes b,  (\id\otimes L_{{_\succ}}(x))\Delta(y)\rangle,
		\end{eqnarray*}
		\begin{eqnarray*}
			\langle \sigma(\id \otimes L_{{_\succ}}(b))\beta_{{_\prec}}(a), x\otimes y\rangle=
			\langle  a\otimes b,(L_{_{\prec}}(y)\otimes \id)\sigma\Delta_{_{\succ}}(x) \rangle,
		\end{eqnarray*}
		\begin{eqnarray*}
			\langle \sigma(R_{\circ}(a)\otimes \id )\beta_{{_\prec}}(b), x\otimes y\rangle=
			\langle  a\otimes b, (\id \otimes R_{{_\prec}}(x))\sigma\Delta(y)\rangle, 
		\end{eqnarray*}
		\begin{eqnarray*}
			\langle (L_{{_\succ}}(a)\otimes \id )\beta_{{_\succ}}(b), x\otimes y\rangle=
			\langle  a\otimes b, (\id \otimes  R_{_{\succ}}(y))\Delta_{_{\succ}}(x)\rangle,
		\end{eqnarray*}
		\begin{eqnarray*}
			\langle (\id\otimes R_{\circ}(a))\beta_{_{\succ}}(b), x\otimes y\rangle=
			\langle  a\otimes b, (\id\otimes  L_{{_\succ}}(x))\sigma\Delta(y) \rangle,
		\end{eqnarray*}
		\begin{eqnarray*}
			\langle \sigma(L_{\circ}(b)\otimes \id)\beta_{_{\prec}} (a), x\otimes y\rangle=
			\langle  a\otimes b, (R_{_{\prec}}(x)\otimes \id )\sigma\Delta(y)\rangle,
		\end{eqnarray*}
		\begin{eqnarray*}
			\langle \sigma(\id \otimes R_{_{\prec}})\beta_{_{\prec}}(a), x\otimes y\rangle=
			\langle  a\otimes b, (L_{_{\prec}}(y)\otimes \id )\Delta_{_{\prec}}(x)\rangle.
		\end{eqnarray*}
	\end{rmk}
	Considering above identities, we derive and can prove the following lemma and theorem
	\begin{lem}
		Let $(A, \prec_{_A}, \succ_{_A})$  be a pre-anti-flexible algebra whose 
		products are given by the  linear maps 
		$\beta_{_{\succ}}^*, \beta_{_{\prec}}^*: A\otimes A\rightarrow A$. 
		Suppose in addition that there is a pre-anti-flexible algebra structure 
		"$ \prec_{_{A^*}}, \succ_{_{A^*}}$" on its dual space $A^*$ given by: 
		$\Delta_{_{\succ}}^*, \Delta_{_{\prec}}^*: A^*\otimes A^*\rightarrow A^*$. 
		Let $x,y\in A$. We have
		\begin{subequations}
			\mbox{ Eq.~\eqref{eq_pre_matched_2} is equivalent to}
			\begin{eqnarray}\label{eq_pre_matched_2'}
				\Delta (x\succ y) -(R_{_{\succ}}(y)\otimes \id)\Delta_{_{\prec}}(x) - 
				(\id\otimes L_{{_\succ}}(x))\Delta(y)=\cr
				(L_{_{\prec}}(y)\otimes \id)\sigma\Delta_{_{\succ}}(x) + 
				(\id \otimes R_{{_\prec}}(x))\sigma\Delta(y)
				-\sigma\Delta(y\prec x).
			\end{eqnarray}
			\mbox{ Eq.~\eqref{eq_pre_matched_4} is equivalent to}
			\begin{eqnarray}\label{eq_pre_matched_4'}
				(\id \otimes  R_{_{\succ}}(y))\Delta_{_{\succ}}(x)- 
				(L_{_{\prec}}(y)\otimes \id )\Delta_{_{\prec}}(x)
				+(R_{_{\prec}}(x)\otimes \id-\id\otimes  L_{{_\succ}}(x))\sigma\Delta(y)=\cr
				(R_{_{\succ}}(y) \otimes  \id)\sigma\Delta_{_{\succ}}(x)- 
				(\id\otimes L_{_{\prec}}(y) )\sigma\Delta_{_{\prec}}(x)
				+(\id \otimes R_{_{\prec}}(x)- L_{{_\succ}}(x)\otimes \id)\Delta(y).
			\end{eqnarray}
		\end{subequations}
		
	\end{lem}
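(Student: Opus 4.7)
The plan is to read off each equivalence from Remark~\ref{rmk:identities} by pairing against test tensors $x \otimes y \in A \otimes A$ and invoking non-degeneracy. More precisely, both Eq.~\eqref{eq_pre_matched_2} and Eq.~\eqref{eq_pre_matched_2'} are identities between elements of $A^* \otimes A^*$ (parametrised by $a,b$) and $A \otimes A$ (parametrised by $x,y$) respectively, so it suffices to show that for all $x,y \in A$ and $a,b \in A^*$ the pairing of the left-hand side of Eq.~\eqref{eq_pre_matched_2} against $x \otimes y$ matches the pairing $\langle a\otimes b,\,\mathrm{LHS}\rangle$ of Eq.~\eqref{eq_pre_matched_2'}, and similarly on the right-hand sides.

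First I would take each of the six summands that appear in Eq.~\eqref{eq_pre_matched_2}, namely
$\beta_{_\succ}(a\circ b)$, $(R_{_{\prec_{A^*}}}(b)\otimes\id)\beta_{_\succ}(a)$, $(\id\otimes L_{\circ}(a))\beta_{_\succ}(b)$, $\sigma(\id\otimes L_{_{\succ_{A^*}}}(b))\beta_{_\prec}(a)$, $\sigma(R_{\circ}(a)\otimes\id)\beta_{_\prec}(b)$, and $\sigma\beta_{_\prec}(b\circ a)$, and replace its pairing with $x\otimes y$ by the corresponding right-hand side already computed in Remark~\ref{rmk:identities}. This translates the whole of Eq.~\eqref{eq_pre_matched_2} into the statement
\begin{eqnarray*}
\langle a\otimes b,\;\Delta(x\succ y) - (R_{_\succ}(y)\otimes\id)\Delta_{_\prec}(x) - (\id\otimes L_{_\succ}(x))\Delta(y)\rangle \\
= \langle a\otimes b,\;(L_{_\prec}(y)\otimes\id)\sigma\Delta_{_\succ}(x) + (\id\otimes R_{_\prec}(x))\sigma\Delta(y) - \sigma\Delta(y\prec x)\rangle.
\end{eqnarray*}
Since this must hold for all $a,b \in A^*$ and the pairing $A^*\otimes A^* \times A\otimes A \to \mathbb F$ is non-degenerate, the tensor identity inside both pairings must coincide, which is exactly Eq.~\eqref{eq_pre_matched_2'}. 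The converse direction is identical, reading the chain of identities in the opposite order.

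The second equivalence, between Eq.~\eqref{eq_pre_matched_4} and Eq.~\eqref{eq_pre_matched_4'}, is proved by exactly the same mechanism: pair each of the eight summands in Eq.~\eqref{eq_pre_matched_4} against $x\otimes y$ and replace it by the appropriate entry from Remark~\ref{rmk:identities}. Four of those entries are given verbatim in the Remark; the other four, obtained from the right-hand side of Eq.~\eqref{eq_pre_matched_4}, come from the swap $a \leftrightarrow b$ of those same identities, which on the $A$-side becomes a swap $x \leftrightarrow y$ combined with a $\sigma$. Again, non-degeneracy converts the pairing-identity into the tensor-identity Eq.~\eqref{eq_pre_matched_4'}.

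The genuine step of the argument is the collection of dualities in Remark~\ref{rmk:identities}; once these are available the present lemma becomes a purely bookkeeping exercise. Accordingly, the only place where I would expect trouble is in aligning the $\sigma$-flips and the positions of the left/right multiplication operators so that the signs and slot assignments on the two sides match up correctly, in particular when the $a \leftrightarrow b$ swap is needed to produce the extra four identities used in the Eq.~\eqref{eq_pre_matched_4} case. No new computation beyond Remark~\ref{rmk:identities} is required.
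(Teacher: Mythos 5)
Your proposal is correct and takes the same route the paper intends: the paper gives no written proof beyond the phrase ``Considering above identities, we derive and can prove the following lemma'', i.e.\ it relies exactly on pairing each summand of Eqs.~\eqref{eq_pre_matched_2} and \eqref{eq_pre_matched_4} against $x\otimes y$ via Remark~\ref{rmk:identities} and invoking non-degeneracy of the pairing between $A^*\otimes A^*$ and $A\otimes A$. One small correction to your description of the second case: swapping $a\leftrightarrow b$ in the four remaining identities corresponds on the $A\otimes A$ side only to an application of $\sigma$ (since $\langle b\otimes a, T\rangle=\langle a\otimes b,\sigma T\rangle$), not to a relabelling $x\leftrightarrow y$; with that adjustment the eight terms of Eq.~\eqref{eq_pre_matched_4} line up term-by-term with Eq.~\eqref{eq_pre_matched_4'}.
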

	\begin{thm}
		Let $(A, \prec_{_A}, \succ_{_A})$  be a pre-anti-flexible algebra.
		Suppose in addition that there is a pre-anti-flexible algebra structure 
		"$ \prec_{_{A^*}}, \succ_{_{A^*}}$" on $A^*$ given by: 
		$\Delta_{_{\succ}}^*, \Delta_{_{\prec}}^*: A^*\otimes A^*\rightarrow A^*$. 
		Then the following relations are equivalent:
		\begin{enumerate}
			\item  $(A, A^*, R^*_{\prec_{_A}},L^*_{\succ_{_A}}, R^*_{\prec_{_{A^*}}}, L^*_{\succ_{_{A^*}}})$ 
			is  matched pair of anti-flexible algebras $aF(A)$ and $aF(A^*)$.
			\item The two linear maps 
			$\Delta_{_{\succ}}, \Delta_{_{\prec}}:A\rightarrow A\otimes A$ satisfying 
			Eqs.~\eqref{eq_pre_matched_1}, \eqref{eq_pre_matched_3}, \eqref{eq_pre_matched_2'} 
			and \eqref{eq_pre_matched_4'}.
		\end{enumerate}
	\end{thm}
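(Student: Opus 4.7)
The plan is to derive this equivalence as a direct corollary of the immediately preceding theorem combined with the preceding lemma, rather than redoing the underlying computations from scratch. By the previous theorem, statement (1) is equivalent to the simultaneous validity of Eqs.~\eqref{eq_pre_matched_1}, \eqref{eq_pre_matched_3}, \eqref{eq_pre_matched_2}, and \eqref{eq_pre_matched_4}. The preceding lemma asserts that Eq.~\eqref{eq_pre_matched_2} is equivalent to Eq.~\eqref{eq_pre_matched_2'} and Eq.~\eqref{eq_pre_matched_4} is equivalent to Eq.~\eqref{eq_pre_matched_4'}. A straightforward substitution then yields the desired equivalence.

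The substantive content lies in verifying the lemma, and hence the main obstacle is the set of pairing identities listed in Remark~\ref{rmk:identities}. First I would confirm these identities term-by-term: for instance, for $\langle \beta_{_\succ}(a\circ b), x\otimes y\rangle$, expand using the definition of $\beta_{_\succ}^*$ as the product $\succ_A$ on $A$ together with $\circ$ on $A^*$, and use the definition~\eqref{eq_dual_map} of duals to move operators across the pairing, obtaining $\langle a\otimes b, \Delta(x\succ y)\rangle$ where $\Delta=\Delta_{_\prec}+\Delta_{_\succ}$. Similar dualization calculations, which repeatedly use that $\Delta_{_\prec}^*,\Delta_{_\succ}^*$ are the products $\prec_{A^*},\succ_{A^*}$ and that $\beta_{_\prec}^*,\beta_{_\succ}^*$ are the products $\prec_A,\succ_A$, produce the remaining identities in the remark.

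Once these dualizations are in hand, Eq.~\eqref{eq_pre_matched_2} can be rewritten: the left-hand side, paired with $x\otimes y$, becomes
\begin{equation*}
\langle a\otimes b,\ \Delta(x\succ y)-(R_{_\succ}(y)\otimes\id)\Delta_{_\prec}(x)-(\id\otimes L_{_\succ}(x))\Delta(y)\rangle,
\end{equation*}
while the right-hand side becomes
\begin{equation*}
\langle a\otimes b,\ (L_{_\prec}(y)\otimes\id)\sigma\Delta_{_\succ}(x)+(\id\otimes R_{_\prec}(x))\sigma\Delta(y)-\sigma\Delta(y\prec x)\rangle.
\end{equation*}
Since $a,b\in A^*$ and $x,y\in A$ are arbitrary and the pairing is non-degenerate, this is precisely Eq.~\eqref{eq_pre_matched_2'}. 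An entirely analogous rewriting turns Eq.~\eqref{eq_pre_matched_4} into Eq.~\eqref{eq_pre_matched_4'}, with the symmetry exchange coming from the two ways of grouping the $L_{\circ},R_{\circ}$ operators against $\beta_{_\prec},\beta_{_\succ}$.

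Having established the lemma, the theorem follows at once: statement~(1) is equivalent to Eqs.~\eqref{eq_pre_matched_1}, \eqref{eq_pre_matched_3}, \eqref{eq_pre_matched_2}, \eqref{eq_pre_matched_4} by the previous theorem, and by the lemma this set is equivalent to Eqs.~\eqref{eq_pre_matched_1}, \eqref{eq_pre_matched_3}, \eqref{eq_pre_matched_2'}, \eqref{eq_pre_matched_4'}. The novelty of this reformulation is that Eqs.~\eqref{eq_pre_matched_2'} and \eqref{eq_pre_matched_4'} make no reference to the dualized operators $\beta_{_\prec},\beta_{_\succ}$ on $A^*$, expressing the entire coalgebra condition in terms of $\Delta_{_\prec},\Delta_{_\succ}:A\to A\otimes A$ and the products of $A$ alone; this is the form needed to formulate the notion of a pre-anti-flexible bialgebra in the next definition.
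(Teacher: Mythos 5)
Your proposal is correct and follows essentially the same route as the paper: the paper presents this theorem as an immediate consequence of the preceding theorem (matched pair $\Leftrightarrow$ Eqs.~\eqref{eq_pre_matched_1}, \eqref{eq_pre_matched_3}, \eqref{eq_pre_matched_2}, \eqref{eq_pre_matched_4}) together with the lemma replacing \eqref{eq_pre_matched_2} and \eqref{eq_pre_matched_4} by \eqref{eq_pre_matched_2'} and \eqref{eq_pre_matched_4'}, the lemma itself resting on the dualization identities of Remark~\ref{rmk:identities}. Your term-by-term verification of those pairing identities and the non-degeneracy argument is exactly the content the paper leaves implicit.
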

	In addition we have
	\begin{defi}
		A pre-anti-flexible bialgebra structure on a vector space $A$ is given by the four linear maps 
		$\Delta_{_{\prec}}, \Delta_{_{\succ}}: A\rightarrow A\otimes A$
		and $\beta_{_{\prec}}, \beta_{_{\succ}}:A^*\rightarrow A^*\otimes A^*$, 
		where $A^*$ is the dual of $A$, such that:
		\begin{enumerate}
			\item the dual maps	$\Delta_{_{\prec}}^*, \Delta_{_{\succ}}^*: A^*\otimes A^*\rightarrow A^*$ 
			induce a pre-anti-flexible algebra structure $\prec_{_{A^*}}, \succ_{_{A^*}}$ on $A^*$,
			\item the dual maps $\beta_{_{\prec}}^*, \beta_{_{\succ}}^*:A\otimes A \rightarrow A$ 
			induce a pre-anti-flexible algebra structure $\prec_{_{A}}, \succ_{_{A}}$ on $A$,
			\item the linear maps $\Delta_{_{\prec}}, \Delta_{_{\succ}}$
			satisfy Eqs.~\eqref{eq_pre_matched_1}, \eqref{eq_pre_matched_3}, \eqref{eq_pre_matched_2'} 
			and \eqref{eq_pre_matched_4'}.
		\end{enumerate}
	\end{defi}
	\begin{thm}
		Let $(A, \prec_{_A}, \succ_{_A})$ and $(A^*, \prec_{_{A^*}}, \succ_{_{A^*}})$
		be two  pre-anti-flexible algebras. The following relations are equivalent:
		\begin{enumerate}
			\item there is an anti-flexible algebra structure on  $aF(A)\oplus aF(A^*)$ and a 
			nondegenerate skew-symmetric bilinear form $\omega$
			is given by  Eq.~\eqref{eq_skew_symmetric_form} and satisfying Eq.~\eqref{eq:simplectic_form},
			\item $(-R^*_{\succ_{_A}},-L^*_{\prec_{_A}} ,R^*_{\cdot_{_A}},L^*_{\cdot_{_A}} , 
			-R^*_{\succ_{_{A^*}}},-L^*_{\prec_{_{A^*}}}, 
			R^*_{\circ_{_{A^*}}}, L^*_{\circ_{_{A^*}}},   A, A^*)$
			is a matched pair of the  pre-anti-flexible algebras $(A, \prec_{_A}, \succ_{_A})$ and 
			$(A^*, \prec_{_{A^*}}, \succ_{_{A^*}})$,
			\item $(R^*_{\prec_{_A}},L^*_{\succ_{_A}}, R^*_{\prec_{_{A^*}}},L^*_{\succ_{_{A^*}}},A, A^*)$ 
			is a matched pair of the underlying  anti-flexible algebras $aF(A)$ and $aF(A^*)$,
			\item $(A, A^*)$ is a pre-anti-flexible bialgebra.
		\end{enumerate}	
	\end{thm}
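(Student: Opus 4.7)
The plan is to establish the equivalences by arranging the four conditions in a cycle and invoking previously established results, rather than verifying the full set of identities again from scratch. Since Theorem~\ref{Theo_pre_Sum}, the theorem preceding the definition of pre-anti-flexible bialgebra, and the matched-pair/bialgebra translation theorem already handle each link, the proof will essentially be a synthesis.

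First, I would establish (1) $\Leftrightarrow$ (3) directly from Theorem~\ref{Theo_pre_Sum}, which was proved precisely to assert that the existence of an anti-flexible structure on $A\oplus A^*$ together with the non-degenerate closed skew-symmetric form $\omega$ given by Eq.~\eqref{eq_skew_symmetric_form} is equivalent to $(R^*_{\prec_{_A}},L^*_{\succ_{_A}}, R^*_{\prec_{_{A^*}}}, L^*_{\succ_{_{A^*}}},A,A^*)$ being a matched pair of $aF(A)$ and $aF(A^*)$; note that $\omega$ automatically satisfies Eq.~\eqref{eq:simplectic_form} by the computation performed inside Theorem~\ref{Theo_pre_Sum}.

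Next I would address (2) $\Leftrightarrow$ (3) by invoking the theorem immediately preceding this statement, which already proves this equivalence: Corollary~\ref{corollary_matched_pair_pre} gives the implication from the pre-anti-flexible matched pair to the matched pair of the underlying anti-flexible algebras, and the converse uses Theorem~\ref{Theo_pre_Sum} combined with Theorem~\ref{Theo_existance_pre_anti_flexible} to reconstruct the pre-anti-flexible products from the symplectic pair. Finally, for (3) $\Leftrightarrow$ (4), I would invoke the theorem translating the matched-pair conditions Eqs.~\eqref{eqq1}--\eqref{eqq4} into the comultiplicative identities Eqs.~\eqref{eq_pre_matched_1}, \eqref{eq_pre_matched_3}, \eqref{eq_pre_matched_2'}, \eqref{eq_pre_matched_4'} via the dualization rules collected in Remark~\ref{rmk:identities}; this is exactly the content of the pre-anti-flexible bialgebra definition, so once one checks that the dual products on $A$ and $A^*$ correspond to the given pre-anti-flexible structures, the identification of (3) with (4) is immediate.

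The main technical obstacle is less in any single implication than in confirming that the conventions line up across four independently-stated results: that the bimodule $(R^*_{\prec_{_A}}, L^*_{\succ_{_A}}, A^*)$ used in Theorem~\ref{Theo_pre_Sum} is the same bimodule extracted from Proposition~\ref{prop_operation_bimodule_pre_anti_flexible}\eqref{eq:one}, and that the substitutions $R_{\cdot}=R_{_{\succ_A}}+R_{_{\prec_A}}$, $L_{\cdot}=L_{_{\succ_A}}+L_{_{\prec_A}}$, and the dual versions on $A^*$, carry the pre-anti-flexible matched-pair identities Eqs.~\eqref{eq_matched_pre_1}--\eqref{eq_matched_pre_10} precisely onto the coalgebra identities through the pairings of Remark~\ref{rmk:identities}. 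Once this bookkeeping is settled, each arrow in the cycle (1) $\Rightarrow$ (3) $\Rightarrow$ (2) $\Rightarrow$ (4) $\Rightarrow$ (1) reduces to a citation, completing the equivalence.
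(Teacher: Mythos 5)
Your proposal is correct and is essentially the paper's own (implicit) argument: the paper states this theorem without a separate proof precisely because it is the synthesis of Theorem~\ref{Theo_pre_Sum} for (1)$\Leftrightarrow$(3), the preceding equivalence theorem (via Corollary~\ref{corollary_matched_pair_pre} and Theorems~\ref{Theo_pre_Sum} and \ref{Theo_existance_pre_anti_flexible}) for (2)$\Leftrightarrow$(3), and the matched-pair-to-comultiplication translation together with the bialgebra definition for (3)$\Leftrightarrow$(4). Your bookkeeping remarks about aligning the bimodule conventions are exactly the right points to check, and nothing further is needed.
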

	\begin{defi}
		A homomorphism of  pre-anti-flexible bialgebras 
		$(A, A^*, \Delta_{_{\prec_{A}}}, \Delta_{_{\succ_{A}}},\beta_{_{\prec_{A^*}}}, \beta_{_{\succ_{A^*}}})$ 
		and\\
		$(B,B^*,\Delta_{_{\prec_{B}}},\Delta_{_{\succ_{B}}},\beta_{_{\prec_{B^*}}},\beta_{_{\succ_{B^*}}})$ 
		is a homomorphism of pre-anti-flexible algebras 
		$\psi:A\rightarrow B$ such that its dual $\psi^*:B^*\rightarrow A^*$ 
		is also a homomorphism of pre-anti-flexible algebras i.e. for any $x\in A$, 
		$a\in B^*,$
		\begin{eqnarray*}
			(\psi\otimes \psi)\Delta_{_{\prec_{A}}}(x)=\Delta_{_{\prec_{B}}}(\psi(x)),\;\;
			(\psi\otimes\psi) \Delta_{_{\succ_{A}}}(x)= \Delta_{_{\succ_{B}}}(\psi(x)),
		\end{eqnarray*}
		\begin{eqnarray*}
			(\psi^*\otimes \psi^*)\beta_{_{\prec_{B^*}}}(a)=\beta_{_{\prec_{A^*}}}(\psi^*(a)),\;\;
			(\psi^*\otimes \psi^*)\beta_{_{\succ_{B^*}}}(a)=\beta_{_{\succ_{A^*}}}(\psi^*(a)).
		\end{eqnarray*}
		An invertible  homomorphism of  pre-anti-flexible bialgebras  is an isomorphism of 
		pre-anti-flexible algebras.
	\end{defi}
	\begin{rmk}\label{rmk:dual_preantiflexible}
		According to Remark~\ref{rmk:identities}, if  
		$(A, A^*, \Delta_{\succ}, \Delta_{\prec}, \beta_{{_\succ}}, \beta_{{_\prec}})$ 
		be a pre-anti-flexible 
		bialgebra, then its associated dual 
		$(A^*, A, \beta_{{_\succ}}, \beta_{{_\prec}}, \Delta_{\succ}, \Delta_{\prec})$ is also 
		a pre-anti-flexible bialgebra.
	\end{rmk}
	\section{Special pre-anti-flexible bialgebras and pre-anti-flexible Yang-Baxter equation}\label{section4}
	We deal here  with a special class of pre-anti-flexible bialgebras
	provided by the  linear maps $\Delta_{_{\prec}}, \Delta_{_{\succ}}:A\rightarrow A\otimes A$ 
	defined by for any $x\in A$
	\begin{subequations}\label{eq:coboundary}
		\begin{equation}\label{eq:coboundary_a}
			\Delta_{\succ}(x)=(\id \otimes L_{\cdot}(x))\mathrm{r}_{_\succ}+
			(R_{\prec}(x)\otimes \id)\sigma \mathrm{r}_{_\prec},
		\end{equation}
		\begin{equation}\label{eq:coboundary_b}
			\Delta_{\prec}(x)=(\id\otimes L_{\succ}(x))\mathrm{r}_{_\prec}+
			(R_{\cdot}(x)\otimes \id)\sigma \mathrm{r}_{_\succ},
		\end{equation}
	\end{subequations}
	which generate the an analogue equation of $\mathcal{D}$-equation of dendriform algebras 
	called pre-anti-flexible Yang-Baxter equation.
	In the following  of this paper, we will refer to both Eqs.~\eqref{eq:coboundary_a} and 
	\eqref{eq:coboundary_b} by  Eq.~\eqref{eq:coboundary}. Other similar referring are 
	scattered throughout this paper. 
	\begin{lem}\label{lem:sigma}
		Let $(A, \prec, \succ)$ be a pre-anti-flexible algebra and 
		$\mathrm{r}_{_\prec}, \mathrm{r}_{_\succ}\in A\otimes A$. Consider 
		$\Delta_{\prec}, \Delta_{\succ}:A\rightarrow A\otimes A$
		two linear maps defined by Eq.~\eqref{eq:coboundary}. Then for any $x\in A,$
		\begin{subequations}
			\begin{eqnarray}\label{eq:sigma_succ}
				\sigma\Delta_{\succ}(x)=(L_{\cdot}(x)\otimes \id )\sigma \mathrm{r}_{_\succ}+
				(\id\otimes R_{\prec}(x)) \mathrm{r}_{_\prec}, 
			\end{eqnarray}
			\begin{eqnarray}\label{eq:sigma_prec}
				\sigma\Delta_{\prec}(x)=(L_{\succ}(x)\otimes \id )\sigma \mathrm{r}_{_\prec}+
				(\id\otimes R_{\cdot}(x)) \mathrm{r}_{_\succ}, 
			\end{eqnarray}
			\begin{eqnarray}\label{eq:delta}
				\Delta(x)=(\id \otimes L_{\cdot}(x))\mathrm{r}_{_\succ}+
				(R_{\prec}(x)\otimes \id)\sigma \mathrm{r}_{_\prec}+
				(\id\otimes L_{\succ}(x))\mathrm{r}_{_\prec}+
				(R_{\cdot}(x)\otimes \id)\sigma \mathrm{r}_{_\succ},
			\end{eqnarray}
			\begin{eqnarray}\label{eq:sigma_delta}
				\sigma\Delta(x)=
				(L_{\cdot}(x)\otimes \id )\sigma \mathrm{r}_{_\succ}+
				(\id\otimes R_{\prec}(x)) \mathrm{r}_{_\prec}+
				(L_{\succ}(x)\otimes \id )\sigma \mathrm{r}_{_\prec}+
				(\id\otimes R_{\cdot}(x)) \mathrm{r}_{_\succ}.
			\end{eqnarray}
		\end{subequations}
	\end{lem}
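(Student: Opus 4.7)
The plan is that all four identities are purely formal consequences of the definitions in \eqref{eq:coboundary} together with the two basic facts $\sigma^2 = \id$ and the intertwining rule $\sigma \circ (f \otimes g) = (g \otimes f) \circ \sigma$ for any pair of linear maps $f,g:A\rightarrow A$. No pre-anti-flexible axiom is actually invoked; the lemma is book-keeping preparing the ground for the Yang--Baxter computations that follow.

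First I would derive \eqref{eq:sigma_succ} by applying $\sigma$ to both sides of \eqref{eq:coboundary_a}. The first summand transforms as
\[
\sigma(\id \otimes L_{\cdot}(x))\mathrm{r}_{_\succ} = (L_{\cdot}(x)\otimes \id)\,\sigma \mathrm{r}_{_\succ},
\]
and the second summand becomes
\[
\sigma(R_{\prec}(x)\otimes \id)\,\sigma \mathrm{r}_{_\prec} = (\id\otimes R_{\prec}(x))\,\sigma^2 \mathrm{r}_{_\prec} = (\id\otimes R_{\prec}(x))\mathrm{r}_{_\prec},
\]
which gives exactly the right-hand side of \eqref{eq:sigma_succ}. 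Equation \eqref{eq:sigma_prec} follows by the identical manipulation applied to \eqref{eq:coboundary_b}.

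Next, \eqref{eq:delta} is obtained just by adding \eqref{eq:coboundary_a} and \eqref{eq:coboundary_b}, using $\Delta(x)=\Delta_{\succ}(x)+\Delta_{\prec}(x)$; there is nothing to simplify since the four terms on the right-hand side of \eqref{eq:delta} are precisely the four summands that appear. Finally \eqref{eq:sigma_delta} follows by adding \eqref{eq:sigma_succ} and \eqref{eq:sigma_prec} (or, equivalently, by applying $\sigma$ to \eqref{eq:delta} term by term).

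There is no real obstacle: the only point requiring care is the order of tensor factors and the correct placement of $\sigma$ after each $(f\otimes g)$ is commuted past it, so the write-up will just list the four short computations without further commentary.
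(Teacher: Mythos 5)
Your proposal is correct and matches the paper's (implicit) treatment: the paper states this lemma without proof, regarding all four identities as immediate formal consequences of Eq.~\eqref{eq:coboundary}, the relation $\sigma\circ(f\otimes g)=(g\otimes f)\circ\sigma$, and $\sigma^2=\id$, exactly as you compute. No pre-anti-flexible axiom is needed, and your four short calculations are precisely what is required.
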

	As consequences following the definition given by
	Eq.~\eqref{eq:coboundary} we have
	\begin{itemize}
		\item  For any  $x,y \in A$. By Eq.~\eqref{eq:sigma_prec} we have 
		\begin{eqnarray*}
			\Delta_{\succ}(x\cdot y)+\sigma\Delta_{\prec}(y\cdot x)= 
			(\id \otimes(L_{\cdot}(x\cdot y)+R_{\cdot}(y\cdot x)))\mathrm{r}_{_\succ}+
			((R_{\prec}(x\cdot y)+L_{\succ}(y\cdot x))\otimes\id)\sigma \mathrm{r}_{_\prec}.
		\end{eqnarray*}
		According to Eq.~\eqref{eq:useful} we have 
		\begin{eqnarray*}
			\Delta_{\succ}(x\cdot y)+\sigma\Delta_{\prec}(y\cdot x)&=&
			(\id \otimes(L_{\cdot}(x)L_{\cdot}(y)+R_{\cdot}(x)R_{\cdot}(y)))\mathrm{r}_{_\succ}\cr&+&
			((R_{\prec}(y)R_{\prec}(x)+L_{\succ}(y)L_{\succ}( x))\otimes\id)\sigma \mathrm{r}_{_\prec}.
		\end{eqnarray*}
		In addition
		\begin{eqnarray*}
			(R_{\prec}(y)\otimes\id)\Delta_{\succ}(x)&=&
			(R_{\prec}(y)\otimes L_{\cdot}(x))\mathrm{r}_{_\succ}+
			(R_{\prec}(y)R_{\prec}(x)\otimes \id)\sigma \mathrm{r}_{_\prec}\cr 
			(\id \otimes L_{\cdot}(x))\Delta_{\succ}(y)&=&
			(\id\otimes L_{\cdot}(x)L_{\cdot}(y))\mathrm{r}_{_\succ}+
			(R_{\prec}(y)\otimes L_{\cdot}(x))\sigma \mathrm{r}_{_\prec}\cr
			\sigma(\id \otimes L_{\succ}(y))\Delta_{\prec}(x)&=&
			(L_{\succ}(y)L_{\succ}(x)\otimes \id )\sigma \mathrm{r}_{_\prec}+
			(L_{\succ}(y)\otimes R_{\cdot}(x) )  \mathrm{r}_{_\succ}\cr
			\sigma(R_{\cdot}(x)\otimes \id )\Delta_{\prec}(y)&=&
			(L_{\succ}(y)\otimes R_{\cdot} (x))\sigma \mathrm{r}_{_\prec}+
			(\id \otimes R_{\cdot}(x)R_{\cdot}(y))\mathrm{r}_{_\succ}.
		\end{eqnarray*}
		Thus
		\begin{eqnarray*}
			&&(R_{\prec}(y)\otimes\id)\Delta_{\succ}(x)+
			(\id \otimes L_{\cdot}(x))\Delta_{\succ}(y)-	
			\Delta_{\succ}(x\cdot y)+
			\sigma(\id \otimes L_{\succ}(y))\Delta_{\prec}(x)-
			\sigma\Delta_{\prec}(y\cdot x)\cr&&+
			\sigma(R_{\cdot}(x)\otimes \id )\Delta_{\prec}(y)=
			(R_{\prec}(y)  \otimes L_{\cdot}(x)  +  
			L_{\succ}(y) \otimes R_{\cdot}(x) )(\mathrm{r}_{_\succ}+\sigma \mathrm{r}_{_\prec}).
		\end{eqnarray*}
		Therefore
		Eq.~\eqref{eq_pre_matched_1} is equivalent to the following
		\begin{eqnarray}{\label{eq:coboundary1}}
			(R_{_\prec}(y)\otimes L_{\cdot}(x)+L_{\succ}(y)\otimes R_{\cdot}(x))(\mathrm{r}_{_\succ}+
			\sigma \mathrm{r}_{_\prec})=0, \;\; \forall x,y\in A.
		\end{eqnarray}
		\item Besides, for any $x, y\in A$ we have
		\begin{eqnarray*}
			(L_{\succ}(x)\otimes \id -\id \otimes R_{\cdot}(x))\Delta_{_\succ}(y)&=& 
			(L_{\succ}(x)\otimes L_{\cdot}(y))\mathrm{r}_{_\succ}+
			(L_{\succ}(x)R_{\prec}(y)\otimes\id )\sigma \mathrm{r}_{_\prec}\cr
			&-&(\id\otimes R_{\cdot}(x)L_{\cdot}(y) )\mathrm{r}_{_\succ}-
			(R_{\prec}(y)\otimes R_{\cdot}(x))\sigma \mathrm{r}_{_\prec}\cr
			\sigma(L_{\cdot}(y)\otimes \id -\id \otimes R_{\prec}(y) )\Delta_{\prec}(x)&=&
			(L_{\succ}(x)\otimes L_{\cdot}(y))\sigma \mathrm{r}_{_\prec}+
			(\id\otimes L_{\cdot}(y)R_{\cdot}(x))\mathrm{r}_{_\succ}\cr 
			&-&(R_{\prec}(y)L_{\succ}(x)\otimes\id)\sigma \mathrm{r}_{_\prec}-
			(R_{\prec}(y)\otimes R_{\cdot}(x))\mathrm{r}_{_\succ}
		\end{eqnarray*}
		Then 
		\begin{eqnarray*}
			&&(L_{\succ}(x)\otimes \id -\id \otimes R_{\cdot}(x))\Delta_{_\succ}(y)+
			\sigma(L_{\cdot}(y)\otimes \id -\id \otimes R_{\prec}(y) )\Delta_{\prec}(x)\cr
			&&-(L_{\succ}(y)\otimes \id -\id \otimes R_{\cdot}(y))\Delta_{_\succ}(x)-
			\sigma(L_{\cdot}(x)\otimes \id -\id \otimes R_{\prec}(x) )\Delta_{\prec}(y)\cr&&=
			(L_{\succ}(x)\otimes L_{\cdot}(y)-R_{\prec}(y)\otimes R_{\cdot}(x)-
			L_{\succ}(y)\otimes L_{\cdot}(x)+
			R_{\prec}(x)\otimes R_{\cdot}(y))(\mathrm{r}_{_\succ}+\sigma \mathrm{r}_{_\prec})\cr 
			&&+ (([L_{\succ}(x), R_{\prec}(y)]-
			[L_{\succ}(y), R_{\prec}(x)])\otimes\id)\sigma \mathrm{r}_{_\prec}+ 
			(\id\otimes ([L_{\cdot}(y), R_{\cdot}(x)]-
			[L_{\cdot}(x), R_{\cdot}(y)]))\mathrm{r}_{_\succ}\cr
			&&=(L_{\succ}(x)\otimes L_{\cdot}(y)-R_{\prec}(y)\otimes R_{\cdot}(x)-
			L_{\succ}(y)\otimes L_{\cdot}(x)+
			R_{\prec}(x)\otimes R_{\cdot}(y))(\mathrm{r}_{_\succ}+\sigma \mathrm{r}_{_\prec})
		\end{eqnarray*}
		Note that  the last equal sign in above equation is due to 
		Eq.~\eqref{eq_bimodule_pre_anti_flexible1} and Eq.~\eqref{eq:useful1}. 
		
		Therefore, Eq.~\eqref{eq_pre_matched_3} is equivalent to the following
		\begin{eqnarray}{\label{eq:coboundary2}}
			(L_{\succ}(x)\otimes L_{\cdot}(y)-R_{\prec}(y)\otimes R_{\cdot}(x)-
			L_{\succ}(y)\otimes L_{\cdot}(x)+R_{\prec}(x)\otimes R_{\cdot}(y))(\mathrm{r}_{_\succ}+
			\sigma \mathrm{r}_{_\prec})=0,\; \forall x,y\in A.
		\end{eqnarray}
		\item Furthermore, by Eqs.~\eqref{eq:delta} and 
		\eqref{eq:sigma_delta} we have for any $x,y\in A$
		\begin{eqnarray*}
			\Delta(x\succ y)+\sigma\Delta(y\prec x)&=&
			(\id \otimes (L_{\cdot}(x\succ y)+R_{\cdot}(y\prec x)))\mathrm{r}_{_\succ}+
			(\id \otimes(R_{_{\prec}}(y\prec x)+L_{_{\succ}}(x\succ y)))\mathrm{r}_{_\prec}\cr&+&
			((R_{_{\prec}}(x\succ y)+L_{_{\succ}}(y\prec x))\otimes \id)\sigma \mathrm{r}_{_\prec}+
			((R_{\cdot}(x\succ y)+L_{\cdot}(y\prec x))\otimes \id)\sigma \mathrm{r}_{_\succ}
		\end{eqnarray*}
		Taking into account to Eqs.~\eqref{eq_bimodule_pre_anti_flexible2} and 
		\eqref{eq_bimodule_pre_anti_flexible5} we have  
		\begin{eqnarray*}
			\Delta(x\succ y)+\sigma\Delta(y\prec x)&=&
			(\id\otimes (L_{_{\succ}}(x\succ y)+
			R_{_{\prec}}(y\prec x)))(\mathrm{r}_{_\succ}+\mathrm{r}_{_\prec})\cr
			&+&
			((L_{_{\succ}}(y\prec x)+R_{_{\prec}}(x\succ y))\otimes \id)(\sigma \mathrm{r}_{_\succ}+
			\sigma \mathrm{r}_{_\prec})\cr 
			&+&
			(\id \otimes (L_{_{\succ}}(x)L_{_{\prec}}(y)+
			R_{_{\prec}}(x)R_{_{\succ}}(y)))\mathrm{r}_{_\succ}+\cr
			&+&
			((L_{_{\prec}}(y)L_{\cdot}(x)+R_{_{\succ}}(y)R_{\cdot}(x))\otimes \id )\sigma \mathrm{r}_{_\succ}.
		\end{eqnarray*}
		Using Eqs.~ \eqref{eq:sigma_succ} , \eqref{eq:delta} and 
		\eqref{eq:sigma_delta} we have for any $x,y\in A$
		\begin{eqnarray*}
			(R_{_{\succ}}(y)\otimes \id)\Delta_{_{\prec}}(x)&=&
			(R_{_{\succ}}(y)\otimes L_{_{\succ}}(x))\mathrm{r}_{_{\prec}}+
			(R_{_{\succ}}(y)R_{\cdot}(x)\otimes\id)\sigma \mathrm{r}_{_{\succ}}\cr 
			(L_{_{\prec}}(y)\otimes \id )\sigma\Delta_{_{\succ}}(x)&=&
			(L_{_{\prec}}(y)L_{\cdot}(x)\otimes\id)\sigma \mathrm{r}_{_{\succ}}+
			(L_{_{\prec}}(y)\otimes R_{_{\prec}}(x))\mathrm{r}_{_{\prec}}\cr 
			(\id \otimes L_{_{\succ}} (x))\Delta(y)&=&
			(\id\otimes L_{_{\succ}}(x)L_{\cdot}(y))\mathrm{r}_{_{\succ}}+
			(R_{_{\prec}}(y)\otimes L_{_{\succ}}(x)) \sigma \mathrm{r}_{_{\prec}}
			\cr&+&
			(\id\otimes L_{_{\succ}}(x)L_{_{\succ}}(y))\mathrm{r}_{_{\prec}}+
			(R_{\cdot}(y)\otimes L_{_{\succ}}(x))\sigma \mathrm{r}_{_{\succ}}\cr 
			(\id \otimes R_{_{\prec}}(x) )\sigma\Delta(y)&=&
			(L_{\cdot}(y)\otimes R{_{\prec}}(x))\sigma \mathrm{r}_{_{\succ}}+
			(\id\otimes R_{_{\prec}}(x)R_{_{\prec}}(y))\mathrm{r}_{_{\prec}}\cr
			&+&
			(L_{_{\succ}}(y)\otimes R_{_{\prec}}(x)) \sigma \mathrm{r}_{_{\prec}}+
			(\id\otimes R_{_{\prec}}(x)R_{\cdot}(y)) \mathrm{r}_{_{\succ}}
		\end{eqnarray*}
		Thus
		\begin{eqnarray*}
			&&(R_{_{\succ}}(y)\otimes \id)\Delta_{_{\prec}}(x)+
			(L_{_{\prec}}(y)\otimes \id )\sigma\Delta_{_{\succ}}(x)+
			(\id \otimes L_{_{\succ}} (x))\Delta(y)+
			(\id \otimes R_{_{\prec}}(x) )\sigma\Delta(y)
			\cr&&=
			(\id\otimes(L_{_{\succ}}(x)L_{_{\succ}}(y)+
			R_{_{\prec}}(x)R_{_{\prec}}(y)) )(\mathrm{r}_{_{\succ}}+\mathrm{r}_{_{\prec}})+
			((R_{_{\succ}}(y)R_{\cdot}(x)+
			L_{_{\prec}}(y)L_{\cdot}(x))\otimes\id  )\sigma \mathrm{r}_{_{\succ}}\cr&&+
			(R_{_{\prec}}(y)\otimes L_{_{\succ}}(x)   +
			L_{_{\succ}}(y)\otimes R_{_{\prec}}(x) )(\sigma \mathrm{r}_{_{\succ}}+
			\sigma \mathrm{r}_{_{\prec}})+
			(\id \otimes(L_{_{\succ}}(x)L_{_{\prec}}(y)+
			R_{_{\prec}}(x)R_{_{\succ}}(y)) )\mathrm{r}_{_{\succ}}\cr&&+
			(R_{_{\succ}}(y)\otimes L_{_{\succ}}(x)  +
			L_{_{\prec}}(y)\otimes R_{_{\prec}}(x) )(\mathrm{r}_{_{\prec}}+
			\sigma \mathrm{r}_{_{\succ}})
		\end{eqnarray*}
		Therefore, using Eq.~\eqref{eq_bimodule_pre_anti_flexible3} we deduce that 
		Eq.~\eqref{eq_pre_matched_2'} is equivalent to the following
		\begin{eqnarray}{\label{eq:coboundary3}}
			\begin{array}{llll}
				&&
				(R_{_{\succ}}(y)\otimes L_{_{\succ}}(x) +
				L_{_{\prec}}(y) \otimes R_{_{\prec}}(x) )(\mathrm{r}_{_{\prec}}+
				\sigma \mathrm{r}_{_{\succ}})\cr&&+
				(R_{_{\prec}}(y)\otimes L_{_{\succ}}(x) +
				L_{_{\succ}}(y)\otimes R_{_{\prec}}(x) )(\sigma \mathrm{r}_{_{\succ}}+
				\sigma \mathrm{r}_{_{\prec}})\cr&&+
				(\id\otimes(L_{_{\succ}}(x\prec y)+R_{_{\prec}}(y\succ x)))(\mathrm{r}_{_{\succ}}+\mathrm{r}_{_{\prec}})\cr&&-
				((L_{_{\succ}}(y\prec x)+
				R_{_{\prec}}(x\succ y))\otimes \id)(\sigma \mathrm{r}_{_{\succ}}+\sigma \mathrm{r}_{_{\prec}})=0.
			\end{array}
		\end{eqnarray}
		\item Finally, in view of Lemma~\ref{lem:sigma} we have  for any $x,y\in A$
		\begin{eqnarray*}
			(\id \otimes R_{_{\succ}}(y))\Delta_{_{\succ}}(x)&=&
			(\id\otimes R_{_{\succ}}(y)L_{\cdot}(x))\mathrm{r}_{_{\succ}}+
			(R_{_{\prec}}(x)\otimes R_{_{\succ}}(y))\sigma \mathrm{r}_{_{\prec}}
			\cr 
			(L_{_{\prec}}(y)\otimes \id)\Delta_{_{\prec}}(x)&=&
			(L_{_{\prec}}(y)\otimes L_{_{\succ}}(x))\mathrm{r}_{_{\prec}}+
			(L_{_{\prec}}(y)R_{\cdot}(x)\otimes \id)\sigma \mathrm{r}_{_{\succ}}
			\cr
			(R_{_{\prec}}(x)\otimes \id)\sigma \Delta(y)&=&
			(R_{_{\prec}}(x)L_{\cdot}(y)\otimes\id )\sigma \mathrm{r}_{_{\succ}}+
			(R_{_{\prec}}(x)\otimes R_{_{\prec}}(y))\mathrm{r}_{_{\prec}}\cr&+&
			(R{_{\prec}}(x)L_{_{\succ}}(y)\otimes \id)\sigma \mathrm{r}_{_{\prec}}+
			(R_{_{\prec}}(x)\otimes R_{\cdot}(y) )\mathrm{r}_{_{\succ}}
			\cr
			(\id \otimes L_{_{\succ}}(x))\sigma \Delta(y)&=&
			(L_{\cdot}(y)\otimes L_{_{\succ}}(x) )\sigma \mathrm{r}_{_{\succ}}+
			(\id\otimes L_{_{\succ}}(x)R_{_{\prec}}(y))\mathrm{r}_{_{\prec}}\cr&+&
			(L_{_{\succ}}(y)\otimes L_{_{\succ}}(x) )\sigma \mathrm{r}_{_{\prec}}+
			(\id\otimes L_{_{\succ}}(x)R_{\cdot}(y) )\mathrm{r}_{_{\succ}}
			\cr
			(R_{_{\succ}}(y)\otimes \id)\sigma\Delta_{_{\succ}}(x)&=&
			(R_{_{\succ}}(y)L_{\cdot}(x)\otimes\id)\sigma \mathrm{r}_{_{\succ}}+
			(R_{_{\succ}}(y)\otimes R_{_{\prec}}(x))\mathrm{r}_{_{\prec}}\cr
			(\id \otimes L_{_{\prec}}(y))\sigma\Delta_{_{\prec}}(x)&=&
			(L_{_{\succ}}(x)\otimes L_{_{\prec}}(y))\sigma \mathrm{r}_{_{\prec}}+
			(\id\otimes L_{_{\prec}}(y)R_{\cdot}(x))\mathrm{r}_{_{\succ}}\cr
			(\id \otimes R_{_{\prec}}(x))\Delta(y)&=&
			(\id\otimes R_{_{\prec}}(x)L_{\cdot}(y))\mathrm{r}_{_{\succ}}+
			(R_{_{\prec}}(y)\otimes R_{_{\prec}}(x))\sigma \mathrm{r}_{_{\prec}}\cr
			&+&
			(\id\otimes R_{_{\prec}}(x)L_{_{\succ}}(y))\mathrm{r}_{_{\prec}}+
			(R_{\cdot}(y)\otimes R_{_{\prec}}(x))\sigma \mathrm{r}_{_{\succ}}\cr
			(L_{_{\succ}}(x)\otimes \id)\Delta(y)&=&
			(L_{_{\succ}}(x)\otimes L_{\cdot}(y))\mathrm{r}_{_{\succ}}+
			(L_{_{\succ}}(x)R_{_{\prec}}(y)\otimes \id)\sigma \mathrm{r}_{_{\prec}}\cr
			&+&
			(L_{_{\succ}}(x)\otimes L_{_{\succ}}(y))\mathrm{r}_{_{\prec}}+
			(L_{_{\succ}}(x)R_{\cdot}(y)\otimes\id )\sigma \mathrm{r}_{_{\succ}}
		\end{eqnarray*}
		According to Eq.~\eqref{eq_bimodule_pre_anti_flexible4} we have for any $x,y\in A$
		\begin{eqnarray*}
			&&(\id \otimes  R_{_{\succ}}(y))\Delta_{_{\succ}}(x)- 
			(L_{_{\prec}}(y)\otimes \id )\Delta_{_{\prec}}(x)
			+(R_{_{\prec}}(x)\otimes \id-\id\otimes  L_{{_\succ}}(x))\sigma\Delta(y)\cr
			&&-(R_{_{\succ}}(y) \otimes  \id)\sigma\Delta_{_{\succ}}(x)+ 
			(\id\otimes L_{_{\prec}}(y) )\sigma\Delta_{_{\prec}}(x)
			-(\id \otimes R_{_{\prec}}(x)- L_{{_\succ}}(x)\otimes \id)\Delta(y)\cr
			&=&((R_{_{\prec}}(x)\otimes R_{_{\prec}}(x)+ L_{_{\succ}}(x)\otimes L_{_{\succ}}(y))
			-(\id\otimes (L_{_{\succ}}(x)R_{_{\prec}}(y)+
			R_{_{\prec}}(x)L_{_{\succ}}(y))))(\mathrm{r}_{_{\succ}}+\mathrm{r}_{_{\prec}})\cr
			&+&
			((R_{_{\prec}}(x)L_{_{\succ}}(y)+L_{_{\succ}}(x)R_{_{\prec}}(y))\otimes \id-
			(L_{_{\succ}}(y)\otimes L_{_{\succ}}(x) +
			R_{_{\prec}}(y)\otimes R_{_{\prec}}(x)) )(\sigma \mathrm{r}_{_{\prec}}+
			\sigma \mathrm{r}_{_{\succ}})\cr
			&+&
			(R_{_{\prec}}(x)\otimes R_{_{\succ}}(y)+L_{_{\succ}}(x)\otimes
			L_{_{\prec}}(y))(\mathrm{r}_{_{\succ}}+\sigma \mathrm{r}_{_{\prec}})-
			(L_{_{\prec}}(y)\otimes L_{_{\succ}}(x)+R_{_{\succ}}(y)\otimes
			R_{_{\prec}}(x))(\mathrm{r}_{_{\prec}}+\sigma \mathrm{r}_{_{\succ}})
		\end{eqnarray*}
		Therefore, Eq.~\eqref{eq_pre_matched_4'} is equivalent to the following
		\begin{eqnarray}{\label{eq:coboundary4}}
			&&0=((R_{_{\prec}}(x)\otimes R_{_{\prec}}(x)+ 
			L_{_{\succ}}(x)\otimes L_{_{\succ}}(y))
			-(\id\otimes(L_{_{\succ}}(x)R_{_{\prec}}(y)+
			R_{_{\prec}}(x)L_{_{\succ}}(y))))(\mathrm{r}_{_{\succ}}+\mathrm{r}_{_{\prec}})\cr
			&&+
			((R_{_{\prec}}(x)L_{_{\succ}}(y)+L_{_{\succ}}(x)R_{_{\prec}}(y))\otimes \id-
			(L_{_{\succ}}(y)\otimes L_{_{\succ}}(x) +
			R_{_{\prec}}(y)\otimes R_{_{\prec}}(x)) )(\sigma \mathrm{r}_{_{\prec}}+\sigma \mathrm{r}_{_{\succ}})\\
			&&+
			(R_{_{\prec}}(x)\otimes R_{_{\succ}}(y)+
			L_{_{\succ}}(x)\otimes L_{_{\prec}}(y))(\mathrm{r}_{_{\succ}}+\sigma \mathrm{r}_{_{\prec}})-
			(L_{_{\prec}}(y)\otimes L_{_{\succ}}(x)+
			R_{_{\succ}}(y)\otimes R_{_{\prec}}(x))(\mathrm{r}_{_{\prec}}+\sigma \mathrm{r}_{_{\succ}})\nonumber
		\end{eqnarray}
	\end{itemize}
	Clearly, we have provided the proof of the following theorem
	\begin{thm}
		Let $(A, \prec, \succ)$ be a pre-anti-flexible algebra and 
		$\mathrm{r}_{_\prec}, \mathrm{r}_{_\succ}\in A\otimes A$. Consider 
		$\Delta_{_\prec}, \Delta_{_\succ}:A\rightarrow A\otimes A$
		two linear maps defined by Eq.~\eqref{eq:coboundary} such that their dual maps
		$\Delta_{\prec}^*, \Delta_{\succ}^*:A^*\otimes A^*\rightarrow A^*$
		define a pre-anti-flexible algebra on $A^*$. 
		Then $(A, A^*)$ is a pre-anti-flexible bialgebra if and only if 
		$\Delta_{\prec}, \Delta_{\succ}$ satisfying  
		Eqs.~\eqref{eq:coboundary1}~-~\eqref{eq:coboundary4}.
	\end{thm}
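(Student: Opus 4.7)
The plan is to reduce the theorem to four separate equivalences, one for each of the matched-pair conditions Eq.~\eqref{eq_pre_matched_1}, Eq.~\eqref{eq_pre_matched_3}, Eq.~\eqref{eq_pre_matched_2'} and Eq.~\eqref{eq_pre_matched_4'} that appear in the definition of a pre-anti-flexible bialgebra, and then show that each of these is equivalent, under the coboundary ansatz \eqref{eq:coboundary}, to Eq.~\eqref{eq:coboundary1}, Eq.~\eqref{eq:coboundary2}, Eq.~\eqref{eq:coboundary3}, Eq.~\eqref{eq:coboundary4} respectively. Since by hypothesis the dual maps $\Delta_{_\prec}^*, \Delta_{_\succ}^*$ already endow $A^*$ with a pre-anti-flexible algebra structure, the only remaining axioms to verify in the definition of a pre-anti-flexible bialgebra are precisely these four compatibility identities, so the equivalence sought reduces exactly to the four calculations sketched in the paragraphs preceding the theorem.

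First, I would record the auxiliary identities that do the bookkeeping. Lemma~\ref{lem:sigma} gives the action of $\sigma$ on $\Delta_{_\succ}(x)$, $\Delta_{_\prec}(x)$, and on $\Delta(x)=\Delta_{_\prec}(x)+\Delta_{_\succ}(x)$; these four formulas are the workhorses for substitution. I would also make explicit use of the consequences of the bimodule axioms for $(L_\cdot,R_\cdot)$ and $(L_{_\succ},R_{_\prec})$ collected in Remark~\ref{rmk_useful}, namely Eq.~\eqref{eq:useful} and Eq.~\eqref{eq:useful1}, as well as Eqs.~\eqref{eq_bimodule_pre_anti_flexible1}--\eqref{eq_bimodule_pre_anti_flexible5} themselves. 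These are precisely the identities that collapse the long expansions into the short combinations involving $\mathrm{r}_{_\succ}+\sigma\mathrm{r}_{_\prec}$ and its companions.

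Next, I would treat each of the four conditions in turn. For Eq.~\eqref{eq_pre_matched_1}, substitute \eqref{eq:coboundary_a}--\eqref{eq:coboundary_b} into each of the six summands, collect the terms of types $(\,\cdot\,\otimes\,\cdot\,)\mathrm{r}_{_\succ}$ and $(\,\cdot\,\otimes\,\cdot\,)\sigma\mathrm{r}_{_\prec}$ separately, and use Eq.~\eqref{eq:useful} on the pieces coming from $\Delta_{_\succ}(x\cdot y)+\sigma\Delta_{_\prec}(y\cdot x)$; the five remaining cross terms cancel and the residue factors through $(R_{_\prec}(y)\otimes L_{\cdot}(x)+L_{_\succ}(y)\otimes R_{\cdot}(x))(\mathrm{r}_{_\succ}+\sigma\mathrm{r}_{_\prec})$, giving Eq.~\eqref{eq:coboundary1}. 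For Eq.~\eqref{eq_pre_matched_3}, the same substitution produces commutators $[L_{_\succ}(x),R_{_\prec}(y)]$ and $[L_\cdot(x),R_\cdot(y)]$ which vanish by \eqref{eq_bimodule_pre_anti_flexible1} and \eqref{eq:useful1}, leaving Eq.~\eqref{eq:coboundary2}. For Eq.~\eqref{eq_pre_matched_2'}, one rewrites $\Delta(x\succ y)+\sigma\Delta(y\prec x)$ via \eqref{eq:delta}--\eqref{eq:sigma_delta}, invokes Eqs.~\eqref{eq_bimodule_pre_anti_flexible2}, \eqref{eq_bimodule_pre_anti_flexible5} and then \eqref{eq_bimodule_pre_anti_flexible3} to match and cancel terms; what survives is Eq.~\eqref{eq:coboundary3}. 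Finally, for Eq.~\eqref{eq_pre_matched_4'}, the same procedure together with \eqref{eq_bimodule_pre_anti_flexible4} yields Eq.~\eqref{eq:coboundary4}.

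The main obstacle is not conceptual but bookkeeping: each of the four equivalences unfolds into a long expansion of tensorial expressions in $\mathrm{r}_{_\succ}$, $\mathrm{r}_{_\prec}$, $\sigma\mathrm{r}_{_\succ}$, $\sigma\mathrm{r}_{_\prec}$, and one must organize the cancellations so that the precise bimodule identity needed at each stage is transparent. To keep this manageable I would write all eight basic substitutions (one for each of $\Delta_{_\succ}$, $\Delta_{_\prec}$, $\sigma\Delta_{_\succ}$, $\sigma\Delta_{_\prec}$, and the two combinations $\Delta$, $\sigma\Delta$ acting via left or right multiplications) as a short dictionary once and then, for each of the four conditions, align the terms in a single aggregated display; this is exactly the template executed in the bulleted computations preceding the theorem, so the final write-up amounts to extracting those computations into a unified proof and concluding via the definition of pre-anti-flexible bialgebra.
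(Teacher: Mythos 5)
Your proposal is correct and follows essentially the same route as the paper: the paper's proof consists precisely of the four bulleted computations preceding the theorem, each reducing one of Eqs.~\eqref{eq_pre_matched_1}, \eqref{eq_pre_matched_3}, \eqref{eq_pre_matched_2'}, \eqref{eq_pre_matched_4'} to the corresponding Eq.~\eqref{eq:coboundary1}--\eqref{eq:coboundary4} via Lemma~\ref{lem:sigma}, Eqs.~\eqref{eq:useful}, \eqref{eq:useful1} and the bimodule identities, exactly as you describe. Your observation that the hypothesis on $\Delta_{_\prec}^*,\Delta_{_\succ}^*$ leaves only the four compatibility identities to check is also the implicit logic of the paper's argument.
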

	\begin{lem}
		Let $A$ be a vector space and let
		$\Delta_{_\prec}, \Delta_{\succ} :A\rightarrow A\otimes A$ be two linear maps. 
		Then $\Delta_{_{\prec}}^*, \Delta_{_{\succ}}^*:A^*\otimes A^*\rightarrow A^*$ 
		define a pre-anti-flexible algebra structure 
		on $A^*$ if and only if the following conditions are satisfied
		\begin{subequations}
			\begin{eqnarray}\label{eq:rmatrix1}
				(\Delta_{_\succ}\otimes\id )\Delta_{_{\prec}}-
				(\id\otimes\Delta_{_\prec})\Delta_{_{\succ}}=
				(\id\otimes\sigma\Delta_{_\succ})\sigma\Delta_{_{\prec}}-
				(\sigma\Delta_{_{\prec}}\otimes\id)\sigma\Delta_{_{\succ}},
			\end{eqnarray}
			\begin{eqnarray}\label{eq:rmatrix2}
				((\Delta_{_{\prec}}+\Delta_{_{\succ}})\otimes\id)\Delta_{\succ}-
				(\id\otimes\Delta_{\succ})\Delta_{\succ}=
				(\id\otimes \sigma\Delta_{_{\prec}})\sigma\Delta_{_\prec}-
				(\sigma(\Delta_{_{\prec}}+\Delta_{_{\succ}})\otimes\id)\sigma\Delta_{_\prec}.
			\end{eqnarray}
		\end{subequations}
	\end{lem}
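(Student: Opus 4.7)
The plan is a direct dualization. Write $a\prec_{A^{*}}b:=\Delta_{_{\prec}}^{*}(a\otimes b)$ and $a\succ_{A^{*}}b:=\Delta_{_{\succ}}^{*}(a\otimes b)$, so that $(A^{*},\prec_{A^{*}},\succ_{A^{*}})$ is a pre-anti-flexible algebra iff for all $a,b,c\in A^{*}$ one has
\begin{itemize}
\item[(i)] $(a\succ_{A^{*}}b)\prec_{A^{*}}c - a\succ_{A^{*}}(b\prec_{A^{*}}c) = (c\succ_{A^{*}}b)\prec_{A^{*}}a - c\succ_{A^{*}}(b\prec_{A^{*}}a)$, and
\item[(ii)] $(a\cdot b)\succ_{A^{*}}c - a\succ_{A^{*}}(b\succ_{A^{*}}c) = (c\prec_{A^{*}}b)\prec_{A^{*}}a - c\prec_{A^{*}}(b\cdot a)$,
\end{itemize}
where $\cdot=\prec_{A^{*}}+\succ_{A^{*}}$. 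I would show that (i) is equivalent, upon evaluation at an arbitrary $x\in A$ and pairing with $a\otimes b\otimes c\in A^{*\otimes 3}$, to Eq.~\eqref{eq:rmatrix1}, and that (ii) is equivalent in the same way to Eq.~\eqref{eq:rmatrix2}.

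The computational engine is the identity $\langle u\prec_{A^{*}}v,y\rangle=\langle u\otimes v,\Delta_{_{\prec}}(y)\rangle$ (and analogously for $\succ_{A^{*}}$), applied twice to each term. For the two terms on the left of (i) one obtains immediately
\[
\langle (a\succ_{A^{*}}b)\prec_{A^{*}}c,x\rangle=\langle a\otimes b\otimes c,(\Delta_{_{\succ}}\otimes\id)\Delta_{_{\prec}}(x)\rangle,\qquad
\langle a\succ_{A^{*}}(b\prec_{A^{*}}c),x\rangle=\langle a\otimes b\otimes c,(\id\otimes\Delta_{_{\prec}})\Delta_{_{\succ}}(x)\rangle.
\]
For the two terms on the right the triple $(c,b,a)$ has to be relabeled to $(a,b,c)$; each such relabeling introduces exactly one $\sigma$ on the appropriate Sweedler leg and produces
\[
\langle (c\succ_{A^{*}}b)\prec_{A^{*}}a,x\rangle=\langle a\otimes b\otimes c,(\id\otimes\sigma\Delta_{_{\succ}})\sigma\Delta_{_{\prec}}(x)\rangle,\qquad
\langle c\succ_{A^{*}}(b\prec_{A^{*}}a),x\rangle=\langle a\otimes b\otimes c,(\sigma\Delta_{_{\prec}}\otimes\id)\sigma\Delta_{_{\succ}}(x)\rangle.
\]
Since the pairing is non-degenerate and $a,b,c$ range over all of $A^{*}$, (i) is equivalent to Eq.~\eqref{eq:rmatrix1}.

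Axiom (ii) is handled in identical fashion. Wherever the unified product $\cdot$ appears one uses $\Delta_{_{\prec}}+\Delta_{_{\succ}}$ in place of one of the two comultiplications, and each occurrence of the swapped triple $(c,b,a)$ again forces a single $\sigma$ on the relevant leg; the bookkeeping matches the four terms of Eq.~\eqref{eq:rmatrix2} term by term. No deeper idea is required: the whole argument is Sweedler-style duality plus the non-degeneracy of the pairing. The main obstacle is purely clerical, namely verifying that the four $\sigma$ placements appearing in Eqs.~\eqref{eq:rmatrix1}--\eqref{eq:rmatrix2} are exactly those induced by the two interchanges $a\leftrightarrow c$ in the pre-anti-flexible axioms on $A^{*}$; once this is carefully tracked the equivalence is immediate in both directions.
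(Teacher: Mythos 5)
Your proposal is correct and follows essentially the same route as the paper: dualize the two pre-anti-flexible axioms on $A^*$ term by term via $\langle u\ast_{A^*}v,y\rangle=\langle u\otimes v,\Delta_\ast(y)\rangle$, track the single $\sigma$ forced on each leg by the relabeling $(c,b,a)\to(a,b,c)$, and invoke non-degeneracy of the pairing. The four pairings you display agree with the paper's computation, so nothing is missing.
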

	\begin{proof}
		Denote by "$\prec_{_{A^*}}, \succ_{_{A^*}}$" the bilinear products on 
		$A^*$ defined respectively by
		$\Delta_{_{\prec}}, \Delta_{_{\succ}}$, i.e.
		for any $x\in A$ and for any $a, b\in A^*$ 
		\begin{equation*}
			\langle a\prec_{_{A^*}} b, x \rangle=\langle \Delta_{_{\prec}}^*(a\otimes b), x \rangle
			=\langle a\otimes b,\Delta_{_{\prec}}(x)\rangle;\;\;
			\langle a\succ_{_{A^*}} b, x\rangle=\langle \Delta_{_{\succ}}^*(a\otimes b), x  \rangle=
			\langle a\otimes b,   \Delta_{_{\succ}}(x)\rangle.
		\end{equation*}
		Furthermore, according to Eq.~\eqref{eq:biasso}  for any $a,b, c\in A^*$ 
		and any $x\in A$, we have 
		\begin{eqnarray*}
			\langle (a,b,c)_{_m}, x \rangle
			&=&\langle (a\succ_{_{A^*}} b)\prec_{_{A^*}} c-a\succ_{_{A^*}} 
			(b\prec_{_{A^*}} c), x\rangle \cr&=&
			\langle(\Delta_{_{\prec}}^*(\Delta_{_\succ}^*\otimes\id )- 
			\Delta_{_{\succ}}^*(\id\otimes\Delta_{_\prec}^*))(a\otimes b\otimes c), x\rangle\cr
			\langle (a,b,c)_{_m}, x \rangle&=&\langle a\otimes b\otimes c, 
			( (\Delta_{_\succ}\otimes\id )\Delta_{_{\prec}}-
			(\id\otimes\Delta_{_\prec})\Delta_{_{\succ}})(x)\rangle,\cr
			\langle (c,b,a)_{_m}, x \rangle&=&\langle (c\succ_{_{A^*}} b)\prec_{_{A^*}} a-
			c\succ_{_{A^*}} (b\prec_{_{A^*}} a), x\rangle \cr&=&
			\langle( \Delta_{_{\prec}}^*\sigma(\id\otimes\Delta_{_\succ}^*\sigma)-
			\Delta_{_{\succ}}^*\sigma(\Delta_{_{\prec}}^*\sigma\otimes\id))
			(a\otimes b\otimes c), x\rangle\cr
			\langle (c,b,a)_{_m}, x \rangle&=&\langle a\otimes b\otimes c, 
			((\id\otimes\sigma\Delta_{_\succ})\sigma\Delta_{_{\prec}}-
			(\sigma\Delta_{_{\prec}}\otimes\id)\sigma\Delta_{_{\succ}})(x)\rangle,\cr
			\langle (a,b,c)_{_l}, x \rangle&=&\langle  (a \prec_{_{A^*}}b+ 
			a\succ_{_{A^*}} b) \succ_{_{A^*}} c- a\succ_{_{A^*}} (b \succ_{_{A^*}} c), x\rangle \cr
			&=&\langle(\Delta_{\succ}^*((\Delta_{_{\prec}}^*+\Delta_{_{\succ}}^*)\otimes\id)-
			\Delta_{\succ}^*(\id\otimes\Delta_{\succ}^*))(a\otimes b\otimes c) ,
			x\rangle\cr&=&\langle a\otimes b\otimes c,
			(((\Delta_{_{\prec}}+\Delta_{_{\succ}})\otimes\id)\Delta_{\succ}-
			(\id\otimes\Delta_{\succ})\Delta_{\succ})(x) \rangle,\cr
			\langle (c,b,a)_{_r}, x \rangle&=&  \langle (c\prec_{_{A^*}} b) \prec_{_{A^*}}a-
			c\prec_{_{A^*}}(b\prec_{_{A^*}} a+b\succ_{_{A^*}} a), x\rangle\cr
			&= &\langle ((\id\otimes \Delta_{_{\prec}}^*\sigma)\Delta_{_\prec}^*\sigma-
			((\Delta_{_{\prec}}^*+\Delta_{_{\succ}}^*)\sigma\otimes\id)
			\Delta_{_\prec}^*\sigma)(a\otimes b\otimes c),
			x\rangle\cr
			&=&\langle  a\otimes b\otimes c, ((\id\otimes \sigma\Delta_{_{\prec}})\sigma\Delta_{_\prec}-
			(\sigma(\Delta_{_{\prec}}+\Delta_{_{\succ}})\otimes\id)\sigma\Delta_{_\prec})(x)\rangle.
		\end{eqnarray*}
		Therefore, $(A^*, \prec_{_{A^*}}, \succ_{_{A^*}})$ is a 
		pre-anti-anti-flexible algebra if and only if 
		Eqs.~\eqref{eq:rmatrix1} and \eqref{eq:rmatrix2} are satisfied.
	\end{proof}
	For a given pre-anti-flexible algebra  $(A,\prec, \succ)$ and 
	two elements $\mathrm{r}_{_\prec}, \mathrm{r}_{_\succ}$ in $A\otimes A$ given by 
	$\displaystyle \mathrm{r}_{_\succ}=\sum_i{a_i\otimes b_i}$ and 
	$\displaystyle \mathrm{r}_{_\prec}=\sum_i{c_i\otimes d_i},$
	for any $a_i, b_i, c_i$ and $d_i$ in $A$, we designate  by
	\begin{equation*}
		\mathrm{r}_{_{\succ, 12}}=\sum_ia_i\otimes b_i\otimes 1,\quad
		\mathrm{r}_{_{\succ, 13}}=\sum_{i}a_i\otimes 1\otimes b_i,
		\quad \mathrm{r}_{_{\succ, 23}}=\sum_i1\otimes a_i\otimes b_i, 
		\mbox{ etc} \cdots, 
	\end{equation*}
	\begin{equation*}
		\mathrm{r}_{_{\prec, 12}}=\sum_i c_i\otimes d_i\otimes 1,\quad
		\mathrm{r}_{_{\prec, 13}}=\sum_{i}c_i\otimes 1\otimes d_i,
		\quad \mathrm{r}_{_{\prec, 23}}=\sum_i1\otimes c_i\otimes d_i,
		\mbox{ etc} \cdots, 
	\end{equation*}
	where $1$ is the unit element if $(A,\prec, \succ)$ unitary, otherwise is a
	symbol playing a similar role as that of the unit element on $A$. Then  operations
	between two $\mathrm{r}_{_{\prec, ..}}, \mathrm{r}_{_{\succ, ..}}$ are in an obvious way. For instance,
	\begin{equation*}
		\mathrm{r}_{_{\succ,12}}\cdot \mathrm{r}_{_{\succ, 13}}=
		\sum_{i,j}a_i\cdot a_j\otimes b_i\otimes b_j,
		\mathrm{r}_{_{\succ, 13}}\prec \mathrm{r}_{_{\succ, 23}}=
		\sum_{i,j}a_i\otimes a_j\otimes b_i\prec b_j,
		\mathrm{r}_{_{\prec, 23}}\succ \mathrm{r}_{_{\prec, 12}}=
		\sum_{i,j}c_j\otimes c_i\succ d_j\otimes d_i, 
		\mbox{ etc}
	\end{equation*}
	and similarly
	\begin{equation*}
		\mathrm{r}_{_{\prec, 12}}\succ \mathrm{r}_{_{\succ, 13}}=
		\sum_{i,j}c_i\succ a_j\otimes d_i\otimes b_j,
		\mathrm{r}_{_{\succ,13}}\prec \mathrm{r}_{_{\prec, 23}}=
		\sum_{i,j}a_i\otimes c_j\otimes b_i\prec d_j,
		\mathrm{r}_{_{\prec, 23}}\cdot \mathrm{r}_{_{\succ,12}}=
		\sum_{i,j}a_j\otimes c_i\cdot b_j\otimes d_i,
	\end{equation*}
	and so on.
	\begin{pro}
		Let $(A, \prec, \succ)$ be a pre-anti-flexible algebra and 
		$\mathrm{r}_{_\prec}, \mathrm{r}_{_\succ} \in A\otimes A$. Define 
		$\Delta_{_{\prec}}, \Delta_{_{\succ}}:A\rightarrow A\otimes A$ by 
		Eqs.~\eqref{eq:coboundary1} and \eqref{eq:coboundary2}.
		Then $\Delta_{_\prec}^*, \Delta_{_{\succ}}^*: A^*\otimes A^*\rightarrow A^*$ 
		define a pre-anti-flexible algebra structure on $A^*$
		if and only if the following equations are satisfied for any $x\in A$
		\begin{subequations}
			\begin{eqnarray}\label{eq:ybe1}
				&&(\id\otimes\id\otimes L_{\succ}(x))M(\mathrm{r})-
				(R_{\prec}(x)\otimes \id\otimes\id)N(\mathrm{r})\cr
				&&+(\id\otimes \id \otimes R_{\prec}(x))P(\mathrm{r})-
				(L_{\succ}(x)\otimes\id \otimes \id)Q(\mathrm{r})=0,
			\end{eqnarray}
			\begin{eqnarray}\label{eq:ybe2}
				\begin{array}{llll}
					&&(\id \otimes\id\otimes L_{\cdot}(x))M'(\mathrm{r})
					+(\id \otimes \id \otimes R_{\cdot}(x))N'(\mathrm{r})+R'(x)
					\cr&&-(R_{\prec}(x)\otimes\id \otimes \id)P'(\mathrm{r})
					-(L_{\succ}(x)\otimes \id \otimes \id)Q'(\mathrm{r})=0,
				\end{array}
			\end{eqnarray}
		\end{subequations}
		where
		\begin{eqnarray*}
			M(\mathrm{r})=\mathrm{r}_{_{\prec, 23}}\cdot \mathrm{r}_{_{\succ, 12}}+
			\mathrm{r}_{_{\prec, 21}}\prec \mathrm{r}_{_{\prec, 13}}-
			\mathrm{r}_{_{\succ, 13}} \succ \mathrm{r}_{_{\prec, 23}},\;
			P(\mathrm{r})=\mathrm{r}_{_{\succ, 12}}\cdot \mathrm{r}_{_{\prec, 23}}+
			\mathrm{r}_{_{\prec, 13}}\succ \mathrm{r}_{_{\prec, 21}}-
			\mathrm{r}_{_{\prec, 23}} \prec \mathrm{r}_{_{\succ, 13}},\cr
			N(\mathrm{r})=
			\mathrm{r}_{_{\succ, 32}}\cdot \mathrm{r}_{_{\prec, 21}}+
			\mathrm{r}_{_{\prec, 31}}\succ \mathrm{r}_{_{\prec, 23}}-
			\mathrm{r}_{_{\prec, 21}} \prec \mathrm{r}_{_{\succ,31}},\;
			Q(\mathrm{r})=
			\mathrm{r}_{_{\prec , 21}}\cdot \mathrm{r}_{_{\succ, 32}}+ 
			\mathrm{r}_{_{\prec, 23}}\prec \mathrm{r}_{_{\prec, 31}}-
			\mathrm{r}_{_{\succ, 31}} \succ \mathrm{r}_{_{\prec, 21}},
		\end{eqnarray*}
		\begin{eqnarray*}
			M'(\mathrm{r})&=&
			\mathrm{r}_{_{\succ, 23}}\prec \mathrm{r}_{_{\succ, 12}}+
			\mathrm{r}_{_{\succ, 21}}\succ \mathrm{r}_{_{\succ, 13}}-
			\mathrm{r}_{_{\succ, 13}}\cdot \mathrm{r}_{_{\succ,23}}+
			\mathrm{r}_{_{\succ, 23}}\succ (\mathrm{r}_{_{\prec, 12}}+\mathrm{r}_{_{\succ, 12}})+
			(\mathrm{r}_{_{\prec, 21}}+\mathrm{r}_{_{\succ, 21}})\prec \mathrm{r}_{_{\succ, 13}},\cr
			N'(\mathrm{r})&=&
			\mathrm{r}_{_{\succ, 12}}\succ \mathrm{r}_{_{\succ, 23}}+
			\mathrm{r}_{_{\succ, 13}}\prec \mathrm{r}_{_{\succ, 21}}-
			\mathrm{r}_{_{\succ,23}}\cdot \mathrm{r}_{_{\succ, 13}}+
			(\mathrm{r}_{_{\prec, 12}}+\mathrm{r}_{_{\succ, 12}})\prec \mathrm{r}_{_{\succ, 23}}+
			\mathrm{r}_{_{\succ, 13}}\succ (\mathrm{r}_{_{\prec, 21}}+\mathrm{r}_{_{\succ, 21}}),\cr
			P'(\mathrm{r})&=&
			\mathrm{r}_{_{\prec, 32}}\prec \mathrm{r}_{_{\prec, 21}}+
			\mathrm{r}_{_{\prec, 31}}\cdot \mathrm{r}_{_{\succ, 23}}-
			\mathrm{r}_{_{\succ, 21}}\succ \mathrm{r}_{_{\prec , 31}}-
			(\mathrm{r}_{_{\prec, 21}}+\mathrm{r}_{_{\succ, 21}})\prec \mathrm{r}_{_{\prec, 31}},\cr
			Q'(\mathrm{r})&=&
			\mathrm{r}_{_{\prec, 21}}\succ \mathrm{r}_{_{\prec, 32}}+ 
			\mathrm{r}_{_{\succ, 23}}\cdot \mathrm{r}_{_{\prec, 31}}
			-\mathrm{r}_{_{\prec,31}}\prec \mathrm{r}_{_{\succ, 21}}-
			\mathrm{r}_{_{\prec,31}}\succ (\mathrm{r}_{_{\prec, 21}}+\mathrm{r}_{_{\succ, 21}}),
		\end{eqnarray*}
		and
		\begin{eqnarray*}
			R'(x)&=&[(\id\otimes (R_{_{\prec}}(x)+L_{_{\succ}}(x))\otimes \id)\mathrm{r}_{_{\prec, 32}}]
			\succ(\mathrm{r}_{_{\prec, 12}}+\mathrm{r}_{_{\succ, 12}})\cr
			&-&[((L_{_{\prec}}(x)+R_{_{\succ}}(x))\otimes\id \otimes \id)\mathrm{r}_{_{\prec, 31}}]
			\succ(\mathrm{r}_{_{\prec, 21}}+\mathrm{r}_{_{\succ, 21}}).
		\end{eqnarray*}
	\end{pro}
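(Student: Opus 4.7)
The plan is to derive the result by substituting the coboundary formulas \eqref{eq:coboundary_a}--\eqref{eq:coboundary_b} (together with the four companion identities in Lemma~\ref{lem:sigma}) into the two conditions \eqref{eq:rmatrix1} and \eqref{eq:rmatrix2}, which, by the preceding lemma, characterize precisely when $(\Delta_{_\prec}^*, \Delta_{_\succ}^*)$ defines a pre-anti-flexible algebra structure on $A^*$. Thus there is nothing to prove beyond checking that after substitution, \eqref{eq:rmatrix1} reduces to \eqref{eq:ybe1} and \eqref{eq:rmatrix2} reduces to \eqref{eq:ybe2}.

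First I would handle \eqref{eq:rmatrix1}. Applying $\Delta_{_\succ}$ to the right slot and $\Delta_{_\prec}$ to the left slot produces an iterated tensor-factor expression where each $\Delta$ introduces the multiplication operators $L_\cdot(x), R_\prec(x), L_\succ(x), R_\cdot(x)$ together with an element from $\mathrm{r}_{_\succ}$ or $\sigma\mathrm{r}_{_\prec}$. After expansion, terms split naturally into four groups according to which of the four slots on the right-hand side of the definition $\Delta_{_\prec/\succ}(x) = (\id \otimes L_{\ast}(x))\mathrm{r} + (R_{\ast}(x) \otimes \id)\sigma \mathrm{r}$ produced them; the groups carrying $L_\succ(x)$, $R_\prec(x)$ on the outer factor combine to $M(\mathrm{r})$, $N(\mathrm{r})$, $P(\mathrm{r})$, $Q(\mathrm{r})$ after translating the operator actions into products in the appropriate tensor slots using the notation $\mathrm{r}_{_{\prec,ij}}, \mathrm{r}_{_{\succ,ij}}$ (for instance, $(L_\succ(a_i) \otimes \id)\mathrm{r}_\succ$ on components $a_j \otimes b_j$ corresponds to $\mathrm{r}_{_{\succ,12}} \cdot$-type products in positions $12, 23$, etc.). The identities \eqref{eq_bimodule_pre_anti_flexible2}--\eqref{eq_bimodule_pre_anti_flexible5} of the regular bimodule of $(A,\prec,\succ)$ are used to fuse terms with composite operators such as $L_\succ(x)L_\prec(y)$ or $R_\prec(x)R_\succ(y)$ into products under a single $L$ or $R$, precisely yielding the structure of $M, N, P, Q$.

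For \eqref{eq:rmatrix2} the procedure is identical but longer. Since this equation mixes $\Delta_{_\prec} + \Delta_{_\succ} = \Delta$, one also needs the formulas \eqref{eq:delta} and \eqref{eq:sigma_delta} from Lemma~\ref{lem:sigma}. After expansion, the ``diagonal'' contributions coming from $L_\cdot(x)$ and $R_\cdot(x)$ acting on the far slot assemble into $M'(\mathrm{r})$ and $N'(\mathrm{r})$, the ``cross'' contributions involving the outer $R_\prec(x), L_\succ(x)$ assemble into $P'(\mathrm{r})$ and $Q'(\mathrm{r})$, and a residual term -- the one that cannot be absorbed into any $(\id \otimes \id \otimes *)$ or $(* \otimes \id \otimes \id)$ placement because it involves operators in the middle slot -- is precisely the $R'(x)$ correction, whose appearance is forced by the asymmetry between $\Delta_\prec$ and $\Delta_\succ$ in the coboundary definition.

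The main obstacle is purely combinatorial bookkeeping: each of the eight summands on the two sides of \eqref{eq:rmatrix1} and \eqref{eq:rmatrix2} generates four terms after substitution (one per summand of $\Delta_{_\prec/\succ}$), and the $\sigma$'s in \eqref{eq:rmatrix1}--\eqref{eq:rmatrix2} together with those already present in the coboundary formula shuffle tensor positions in ways that must be tracked carefully. The bimodule axioms \eqref{eq_bimodule_pre_anti_flexible1}--\eqref{eq_bimodule_pre_anti_flexible5} (and their re-expressions in Remark~\ref{rmk_useful}) are needed at several stages to collapse composite operator expressions; the cleanest organization is to collect terms first by which outer operator ($L_\succ(x), R_\prec(x), L_\cdot(x), R_\cdot(x)$) they carry, and only then to translate the tensor-slot actions into the $\mathrm{r}_{\ast, ij}$ notation. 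Once both sides are rewritten in that notation, the equivalence of \eqref{eq:rmatrix1}--\eqref{eq:rmatrix2} with \eqref{eq:ybe1}--\eqref{eq:ybe2} is a direct comparison.
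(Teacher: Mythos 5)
Your proposal follows essentially the same route as the paper: the paper's proof also substitutes the coboundary formulas \eqref{eq:coboundary} (via the explicit expansions $\mathrm{r}_{_\succ}=\sum_i a_i\otimes b_i$, $\mathrm{r}_{_\prec}=\sum_i c_i\otimes d_i$) into the conditions \eqref{eq:rmatrix1}--\eqref{eq:rmatrix2} of the preceding lemma, regroups the resulting sums by outer operator into the $\mathrm{r}_{_{\ast,ij}}$ products defining $M,N,P,Q,M',N',P',Q'$, and disposes of the residual terms (the paper's $A1,A2,B1,\dots,B5$) using the anti-flexibility of the underlying product, Eq.~\eqref{eq:pre-antiflexible2}, and the regular-bimodule identities, which is exactly where $R'(x)$ emerges. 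Your sketch correctly identifies both the mechanism and the identities needed, so it is a faithful outline of the paper's argument.
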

	\begin{proof}
		Let $x\in A$.
		Setting $\displaystyle \mathrm{r}_{_\succ}=\sum_i a_i\otimes b_i, \;\; 
		\mathrm{r}_{_\prec}=\sum_i c_i\otimes d_i$	we have
		\begin{eqnarray*}
			(\Delta_{_\succ}\otimes\id )\Delta_{_{\prec}}(x)&=&
			\sum_i\{a_{j}\otimes(c_i\cdot b_j)\otimes (x\succ d_{i})+
			(d_j\prec c_i)\otimes c_j\otimes (x\succ d_i)\cr&+&
			a_j\otimes ((b_i\cdot x)\cdot b_j)\otimes a_i+
			(d_j\prec (b_i\cdot x))\otimes c_j\otimes a_i\}\cr
			&=&
			(\id\otimes\id\otimes L_{\succ}(x))(\mathrm{r}_{_{\prec, 23}}\cdot \mathrm{r}_{_{\succ, 12}}+
			\mathrm{r}_{_{\prec, 21}}\prec \mathrm{r}_{_{\prec, 13}})\cr
			&+&\sum_i\{a_j\otimes ((b_i\cdot x)\cdot b_j)\otimes a_i+
			(d_j\prec (b_i\cdot x))\otimes c_j\otimes a_i\}
			\cr
			(\id\otimes\Delta_{_\prec})\Delta_{_{\succ}}(x)&=&
			\sum_i\{a_i\otimes c_j\otimes ((x\cdot b_i)\prec d_j)+
			a_i\otimes (b_j\cdot (x\cdot b_i))\otimes a_j\cr&+&
			(d_i\prec x)\otimes c_j\otimes (c_i\succ d_j)+
			(d_i\prec x)\otimes (b_j\cdot c_i)\otimes a_j\}\cr
			&=&  
			(R_{\prec}(x)\otimes \id\otimes\id)(\mathrm{r}_{_{\prec, 31}}\succ \mathrm{r}_{_{\prec, 23}}+
			\mathrm{r}_{_{\succ, 32}}\cdot \mathrm{r}_{_{\prec, 21}})\cr
			&+&\sum_i \{a_i\otimes c_j\otimes ((x\cdot b_i)\prec d_j)+
			a_i\otimes (b_j\cdot (x\cdot b_i))\otimes a_j\}\cr
			(\id\otimes\sigma\Delta_{_\succ})\sigma\Delta_{_{\prec}}(x)&=&
			\sum_i\{(x\succ d_i)\otimes (c_i\cdot b_j)\otimes a_j+
			(x\succ d_i)\otimes c_j\otimes (d_j\prec c_i)\cr&+&
			a_i\otimes ((b_i\cdot x)\cdot b_j)\otimes a_j+
			a_i\otimes c_j\otimes (d_j\prec (b_i\cdot x))\}\cr&=&
			(L_{\succ}(x)\otimes\id \otimes \id)(\mathrm{r}_{_{\prec , 21}}\cdot \mathrm{r}_{_{\succ, 32}}+ 
			\mathrm{r}_{_{\prec, 23}}\prec \mathrm{r}_{_{\prec, 31}})\cr 
			&+&\sum_i\{a_i\otimes ((b_i\cdot x)\cdot b_j)\otimes a_j+
			a_i\otimes c_j\otimes (d_j\prec (b_i\cdot x))\}\cr
			(\sigma\Delta_{_{\prec}}\otimes\id)\sigma\Delta_{_{\succ}}(x)&=&
			\sum_i\{((x\cdot b_i)\succ d_j)\otimes c_j\otimes a_i+
			a_j\otimes (b_j\cdot (x\cdot b_i))\otimes a_i\cr 
			&+&(c_i\succ d_j)\otimes c_j\otimes (d_i \prec x)+
			a_j\otimes (b_j\cdot c_i)\otimes (b_i\prec x)\}\cr
			&=&
			(\id\otimes \id \otimes R_{\prec}(x))(\mathrm{r}_{_{\prec, 13}}\succ \mathrm{r}_{_{\prec, 21}}+
			\mathrm{r}_{_{\succ, 12}}\cdot \mathrm{r}_{_{\prec, 23}})\cr
			&+&\sum_i\{((x\cdot b_i)\succ d_j)\otimes c_j\otimes a_i+
			a_j\otimes (b_j\cdot (x\cdot b_i))\otimes a_i\}
		\end{eqnarray*}
		\begin{eqnarray*}
			((\Delta_{_{\prec}}+\Delta_{_{\succ}})\otimes\id)\Delta_{\succ}(x)&=&
			\sum_i\{c_j\otimes (a_i\succ d_j)\otimes (x\cdot b_i)+
			(b_j\cdot a_i)\otimes a_j\otimes (x\cdot b_i)\cr&+&
			a_j\otimes (a_i\cdot b_j)\otimes (x\cdot b_i)
			+(d_j\prec a_i)\otimes c_j\otimes(x\cdot b_i)\cr&+&
			c_j\otimes ((d_i\prec x)\succ d_j)\otimes c_i+
			(b_j\cdot(d_i\prec x))\otimes a_j\otimes c_i\cr&+&
			a_j\otimes ((d_i\prec x)\cdot b_j)\otimes c_i+
			(d_j\prec (d_i\prec x))\otimes c_j\otimes c_i\}\cr 
			&=&
			(\id \otimes\id\otimes L_{\cdot}(x))(\mathrm{r}_{_{\succ, 23}}\succ \mathrm{r}_{_{\prec, 12}}+
			\mathrm{r}_{_{\succ, 21}}\cdot \mathrm{r}_{_{\succ, 13}}\cr&+&
			\mathrm{r}_{_{\succ, 23}}\cdot \mathrm{r}_{_{\succ, 12}}+
			\mathrm{r}_{_{\prec, 21}}\prec \mathrm{r}_{_{\succ, 13}})\cr 
			&+&\sum_i\{c_j\otimes ((d_i\prec x)\succ d_j)\otimes c_i+
			(b_j\cdot(d_i\prec x))\otimes a_j\otimes c_i\cr&+&
			a_j\otimes ((d_i\prec x)\cdot b_j)\otimes c_i+
			(d_j\prec (d_i\prec x))\otimes c_j\otimes c_i\}\cr 
			(\id\otimes\Delta_{\succ})\Delta_{\succ}(x)&=&
			\sum_i\{a_i\otimes a_j\otimes ((x\cdot b_i)\cdot b_j)
			+a_i\otimes (d_j\prec (x\cdot b_i))\otimes c_j\cr
			&+&(d_i\prec x)\otimes a_j\otimes (c_i\cdot b_j)+
			(d_i\prec x)\otimes (d_j\prec c_i)\otimes c_j\}\cr 
			&=&
			(R_{\prec}(x)\otimes\id \otimes \id)(\mathrm{r}_{_{\prec, 31}}\cdot \mathrm{r}_{_{\succ, 23}}+
			\mathrm{r}_{_{\prec, 32}}\prec \mathrm{r}_{_{\prec, 21}})\cr
			&+&\sum_i\{a_i\otimes a_j\otimes ((x\cdot b_i)\cdot b_j)
			+a_i\otimes (d_j\prec (x\cdot b_i))\otimes c_j\}\cr
			(\id\otimes \sigma\Delta_{_{\prec}})\sigma\Delta_{_\prec}(x)&=& 
			\sum_i\{(x\succ d_i)\otimes (c_i\succ d_j)\otimes c_j+
			(x\succ d_i)\otimes a_j\otimes (b_j\cdot c_i)\cr 
			&+&a_i\otimes ((b_i\cdot x)\succ d_j)\otimes c_j+
			a_i\otimes a_j\otimes (b_j\cdot(b_i\cdot x))\}\cr
			&=&(L_{\succ}(x)\otimes \id \otimes \id)(\mathrm{r}_{_{\prec, 21}}\succ \mathrm{r}_{_{\prec, 32}}+
			\mathrm{r}_{_{\succ, 23}}\cdot \mathrm{r}_{_{\prec, 31}})\cr 
			&+& \sum_i\{a_i\otimes ((b_i\cdot x)\succ d_j)\otimes c_j+
			a_i\otimes a_j\otimes (b_j\cdot(b_i\cdot x))\}\cr
			(\sigma(\Delta_{_{\prec}}+\Delta_{_{\succ}})\otimes\id)\sigma\Delta_{_\prec}(x)&=&
			\sum_i\{((x\succ d_i)\succ d_j)\otimes c_j\otimes c_i+
			a_j\otimes (b_j\cdot(x\succ d_i))\otimes c_i\cr 
			&+&
			((x\succ d_i)\cdot b_j)\otimes a_j\otimes c_i+
			c_j\otimes (d_j\prec(x\succ d_i))\otimes c_i\cr
			&+&
			(a_i\succ d_j)\otimes c_j\otimes (b_i\cdot x)+
			a_j\otimes (b_j\cdot a_i)\otimes (b_i\cdot x)
			\cr&+&
			(a_i\cdot b_j)\otimes a_j\otimes (b_i\cdot x)+
			c_j\otimes (d_j\prec a_i)\otimes (b_i\cdot x)\}\cr
			&=&
			(\id \otimes \id \otimes R_{\cdot}(x))(\mathrm{r}_{_{\succ, 13}}\succ \mathrm{r}_{_{\prec, 21}}+
			\mathrm{r}_{_{\succ, 12}}\cdot \mathrm{r}_{_{\succ, 23}}\cr&+&
			\mathrm{r}_{_{\succ, 13}}\cdot \mathrm{r}_{_{\succ, 21}}+
			\mathrm{r}_{_{\prec, 12}}\prec \mathrm{r}_{_{\succ, 23}})\cr
			&+&
			\sum_i\{((x\succ d_i)\succ d_j)\otimes c_j\otimes c_i+
			a_j\otimes (b_j\cdot(x\succ d_i))\otimes c_i\cr 
			&+&
			((x\succ d_i)\cdot b_j)\otimes a_j\otimes c_i+
			c_j\otimes (d_j\prec(x\succ d_i))\otimes c_i\}
		\end{eqnarray*}
		Then
		\begin{eqnarray*}
			&&(\Delta_{_\succ}\otimes\id )\Delta_{_{\prec}}(x)-
			(\id\otimes\Delta_{_\prec})\Delta_{_{\succ}}(x)-
			(\id\otimes\sigma\Delta_{_\succ})\sigma\Delta_{_{\prec}}(x)+
			(\sigma\Delta_{_{\prec}}\otimes\id)\sigma\Delta_{_{\succ}}(x)=A1(x)+A2(x)\cr&&+
			(\id\otimes\id\otimes L_{\succ}(x))(\mathrm{r}_{_{\prec, 23}}\cdot \mathrm{r}_{_{\succ, 12}}+
			\mathrm{r}_{_{\prec, 21}}\prec \mathrm{r}_{_{\prec, 13}})
			-(R_{\prec}(x)\otimes \id\otimes\id)(\mathrm{r}_{_{\prec, 31}}\succ \mathrm{r}_{_{\prec, 23}}+
			\mathrm{r}_{_{\succ, 32}}\cdot \mathrm{r}_{_{\prec, 21}})\cr &&
			+(\id\otimes \id \otimes R_{\prec}(x))(\mathrm{r}_{_{\prec, 13}}\succ \mathrm{r}_{_{\prec, 21}}+
			\mathrm{r}_{_{\succ, 12}}\cdot \mathrm{r}_{_{\prec, 23}})
			-(L_{\succ}(x)\otimes\id \otimes \id)(\mathrm{r}_{_{\prec , 21}}\cdot \mathrm{r}_{_{\succ, 32}}+
			\mathrm{r}_{_{\prec, 23}}\prec \mathrm{r}_{_{\prec, 31}})
		\end{eqnarray*}
		where
		\begin{eqnarray*}
			A1(x)&=&\sum_i\{ a_j\otimes ((b_i\cdot x)\cdot b_j+b_j\cdot (x\cdot b_i))\otimes a_i-
			a_i\otimes (b_j\cdot (x\cdot b_i)+(b_i\cdot x)\cdot b_j)\otimes a_j\}\\
			A2(x)&=&\sum_i\{(d_j\prec (b_i\cdot x)+(x\cdot b_i)\succ d_j)\otimes c_j\otimes a_i -
			a_i\otimes c_j\otimes ((x\cdot b_i)\prec d_j+d_j\prec (b_i\cdot x)) \}\\
		\end{eqnarray*}
		By exchanging $i$ and $j$, and using  Remark~\ref{rmk_1}~\eqref{rmk_flex}, 
		we have $A1(x)=0$.
		
		Using Eqs.~\eqref{eq:pre-antiflexible2} we have  
		\begin{eqnarray*}
			A2(x)&=&(L_{\succ}(x)\otimes\id \otimes\id) 
			(\mathrm{r}_{_{\succ, 31}} \succ \mathrm{r}_{_{\prec, 21}})
			-(\id \otimes \id \otimes L_{\succ}(x))
			(\mathrm{r}_{_{\succ, 13}} \succ \mathrm{r}_{_{\prec, 23}})
			\cr&+&
			(R_{\prec}(x)\otimes\id \otimes\id) (\mathrm{r}_{_{\prec, 21}} \prec \mathrm{r}_{_{\succ,31}})-
			(\id \otimes \id \otimes R_{\prec}(x))(\mathrm{r}_{_{\prec, 23}} \prec \mathrm{r}_{_{\succ, 13}}).
		\end{eqnarray*}
		Besides, 
		\begin{eqnarray*}
			&&((\Delta_{_{\prec}}+\Delta_{_{\succ}})\otimes\id)\Delta_{\succ}(x)-
			(\id\otimes\Delta_{\succ})\Delta_{\succ}(x)
			-(\id\otimes \sigma\Delta_{_{\prec}})\sigma\Delta_{_\prec}(x)
			+(\sigma(\Delta_{_{\prec}}+
			\Delta_{_{\succ}})\otimes\id)\sigma\Delta_{_\prec}(x)\cr
			&=&(\id \otimes\id\otimes L_{\cdot}(x))(\mathrm{r}_{_{\succ, 23}}\succ \mathrm{r}_{_{\prec, 12}}
			+\mathrm{r}_{_{\succ, 21}}\cdot \mathrm{r}_{_{\succ, 13}}+
			\mathrm{r}_{_{\succ, 23}}\cdot \mathrm{r}_{_{\succ, 12}}+
			\mathrm{r}_{_{\prec, 21}}\prec \mathrm{r}_{_{\succ, 13}})+B(x)
			\cr &+&(\id \otimes \id \otimes R_{\cdot}(x))(\mathrm{r}_{_{\succ, 13}}\succ \mathrm{r}_{_{\prec, 21}
			}+\mathrm{r}_{_{\succ, 12}}\cdot \mathrm{r}_{_{\succ, 23}}+
			\mathrm{r}_{_{\succ, 13}}\cdot \mathrm{r}_{_{\succ, 21}}+
			\mathrm{r}_{_{\prec, 12}}\prec \mathrm{r}_{_{\succ, 23}})+
			\cr&-&(R_{\prec}(x)\otimes\id \otimes \id)(\mathrm{r}_{_{\prec, 31}}\cdot \mathrm{r}_{_{\succ, 23}
			}+\mathrm{r}_{_{\prec, 32}}\prec \mathrm{r}_{_{\prec, 21}})
			-(L_{\succ}(x)\otimes \id \otimes \id)(\mathrm{r}_{_{\prec, 21}}\succ \mathrm{r}_{_{\prec, 32}}+ 
			\mathrm{r}_{_{\succ, 23}}\cdot \mathrm{r}_{_{\prec, 31}}),
		\end{eqnarray*}
		where $B(x)=B1(x)+B2(x)+B3(x)+B4(x)+B5(x)$ with
		$$
		B1(x)=-\sum_i\{a_i\otimes a_j\otimes ((x\cdot b_i)\cdot b_j+
		b_j\cdot(b_i\cdot x))\},\;\;
		B2(x)=\sum_i\{c_j\otimes ((d_i\prec x)\succ d_j+
		d_j\prec(x\succ d_i))\otimes c_i\},
		$$
		$$
		B3(x)=\sum_i\{((x\succ d_i)\succ d_j+
		d_j\prec(d_i\prec x))\otimes c_j\otimes c_i\},\;\;
		B4(x)=\sum_i\{ (b_j\cdot(d_i\prec x)+(x\succ d_i)\cdot b_j)\otimes a_j\otimes c_i\}
		$$
		\begin{eqnarray*}
			B5(x)=\sum_i\{a_j\otimes (b_j\cdot(x\succ d_i)+(d_i\prec x)\cdot b_j)\otimes c_i-
			a_i\otimes ((b_i\cdot x)\succ d_j+d_j\prec (x\cdot b_i))\otimes c_j\}.
		\end{eqnarray*}
		Considering Remark~\ref{rmk_1}~\eqref{rmk_flex} we have 
		\begin{eqnarray*}
			B1(x)=-(\id \otimes \id \otimes L_{\cdot}(x))
			(\mathrm{r}_{_{\succ, 13}}\cdot \mathrm{r}_{_{\succ,23}})-
			(\id \otimes \id \otimes R_{\cdot}(x))(\mathrm{r}_{_{\succ,23}}\cdot \mathrm{r}_{_{\succ, 13}}).
		\end{eqnarray*}
		Using Eq.~\eqref{eq_bimodule_pre_anti_flexible5} we have 
		\begin{eqnarray*}
			B3(x)&=&(x\succ (d_i\succ d_j)+(d_j\prec d_i)\prec x)\otimes c_j\otimes c_i-
			((x\prec d_i)\succ d_j+d_j\prec (d_i\succ x))\otimes c_j\otimes c_i\cr
			&=&(L_{_{\succ}}(x)\otimes\id \otimes\id)(\mathrm{r}_{_{\prec,31}}\succ \mathrm{r}_{_{\prec, 21}})+
			(R_{_{\prec}}(x)\otimes\id \otimes\id)(\mathrm{r}_{_{\prec, 21}}\prec \mathrm{r}_{_{\prec, 31}})
			\cr&-&((x\prec d_i)\succ d_j+d_j\prec (d_i\succ x))\otimes c_j\otimes c_i.
		\end{eqnarray*}
		By Eqs.~\eqref{eq_bimodule_pre_anti_flexible2} and \eqref{eq_bimodule_pre_anti_flexible5} we have
		\begin{eqnarray*}
			B4(x)&=& (b_j\succ(d_i\prec x)+(x\succ d_i)\prec b_j   )  \otimes a_j\otimes c_i+
			(b_j\prec(d_i\prec x)+(x\succ d_i)\succ b_j   )  \otimes a_j\otimes c_i\cr
			&=&(L_{_{\succ}}(x)\otimes\id \otimes\id)(\mathrm{r}_{_{\prec, 31}}\prec \mathrm{r}_{_{\succ, 21}}+
			\mathrm{r}_{_{\prec, 31}}\succ \mathrm{r}_{_{\succ, 21}})\cr
			&+&
			(R_{_{\prec}}(x)\otimes\id \otimes\id)(\mathrm{r}_{_{\succ, 21}}\succ \mathrm{r}_{_{\prec , 31}}+ 
			\mathrm{r}_{_{\succ, 21}}\prec \mathrm{r}_{_{\prec, 31}})\cr
			&-&(b_j\prec(d_i\succ x)+(x\prec d_i)\succ b_j   )  \otimes a_j\otimes c_i.
		\end{eqnarray*}
		Furthermore, we have
		\begin{eqnarray*}
			B5(x)&=&\sum_i\{a_j\otimes (b_j\cdot(x\succ d_i)+(d_i\prec x)\cdot b_j)\otimes c_i-
			a_i\otimes ((b_i\cdot x)\succ d_j+d_j\prec (x\cdot b_i))\otimes c_j\}\cr
			&=&\sum_i\{a_j\otimes (b_j\prec (x\succ d_i)+b_j\succ (x\succ d_i))\otimes c_i+
			a_j\otimes ((d_i\prec x)\prec b_j+(d_i\prec x)\succ b_j) \otimes c_i\cr 
			&-&a_i\otimes ((b_i\cdot x)\succ d_j+d_j\prec (x\cdot b_i))\otimes c_j\}
			=\sum_i\{a_j\otimes (b_j\succ (x\succ d_i)+(d_i\prec x)\prec b_j)\otimes c_i\cr
			&-&a_i\otimes ((b_i\cdot x)\succ d_j+d_j\prec (x\cdot b_i))\otimes c_j\}+
			\sum_i\{a_j\otimes (b_j\prec (x\succ d_i)+(d_i\prec x)\succ b_j)\otimes c_i\}\cr
			B5(x)&=&\sum_i\{a_j\otimes (b_j\prec (x\succ d_i)+(d_i\prec x)\succ b_j)\otimes c_i\}
		\end{eqnarray*}
		The last equal sign in above equation is due to 
		Eq.~\eqref{eq:pre-antiflexible2} and changing indices 
		$i$ to $j$ in the last term of the  first summation.
		Finally, we have 
		\begin{eqnarray*}
			R'(x)&=&\sum_i\{c_j\otimes ((d_i\prec x)\succ d_j+d_j\prec(x\succ d_i))\otimes c_i-
			((x\prec d_i)\succ d_j+d_j\prec (d_i\succ x))\otimes c_j\otimes c_i\cr
			&-&(b_j\prec(d_i\succ x)+(x\prec d_i)\succ b_j   )  \otimes a_j\otimes c_i+
			a_j\otimes (b_j\prec (x\succ d_i)+(d_i\prec x)\succ b_j)\otimes c_i\}\cr
			R'(x)&=&
			[(\id\otimes R_{_{\prec}}(x)\otimes \id)\mathrm{r}_{_{\prec, 32}}]
			\succ(\mathrm{r}_{_{\prec, 12}}+\mathrm{r}_{_{\succ, 12}})
			+
			[(\id\otimes L_{_{\succ}}(x)\otimes \id)\mathrm{r}_{_{\prec, 32}}]
			\prec(\mathrm{r}_{_{\prec, 12}}+\mathrm{r}_{_{\succ, 12}})\cr
			&-&
			[(L_{_{\prec}}(x)\otimes\id \otimes \id)\mathrm{r}_{_{\prec, 31}}]
			\succ(\mathrm{r}_{_{\prec, 21}}+\mathrm{r}_{_{\succ, 21}})
			-
			[(R_{_{\succ}}(x)\otimes\id \otimes \id)\mathrm{r}_{_{\prec, 31}}]
			\prec (\mathrm{r}_{_{\prec, 21}}+\mathrm{r}_{_{\succ, 21}}).
		\end{eqnarray*}
		Therefore, hold the equivalences.
	\end{proof}
	\begin{rmk}
		Considering the flipping map "$flp$" defined on $A\otimes A$ such that for any elements  
		$\mathrm{r}, \mathrm{r}'\in A\otimes A$, 
		$flp(\mathrm{r}\prec \mathrm{r}' )=
		\mathrm{r}\succ  \mathrm{r}'$, 
		$flp( \mathrm{r}\succ  \mathrm{r}'=
		\mathrm{r}\prec  \mathrm{r}'$, we establish finally
		the following relations
		$
		P(\mathrm{r})=flp(M(\mathrm{r})),\;
		N(\mathrm{r})=\sigma_{13}(flp(M(\mathrm{r}))),\;
		Q(\mathrm{r})=flp( \sigma_{13} (flp(M(\mathrm{r})))),
		$
		with $\sigma_{13}(x\otimes y\otimes z)=z\otimes y\otimes x$ 
		for any $x,y,z \in A$. Besides, 
		we also have  $N'(\mathrm{r})=flp(M'(\mathrm{r}))$ and
		$Q'(\mathrm{r})=flp(P'(\mathrm{r}))$.
	\end{rmk}
	\begin{pro}
		Let $(A, \prec, \succ)$ be a pre-anti-flexible algebra and 
		$\mathrm{r}_{_\prec}, \mathrm{r}_{_\succ} \in A\otimes A$. Define 
		the linear maps
		$\Delta_{_{\prec}}, \Delta_{_{\succ}}:A\rightarrow A\otimes A$ by 
		Eq.~\eqref{eq:coboundary}.
		Then $\Delta_{_\prec}^*, \Delta_{_{\succ}}^*: A^*\otimes A^*\rightarrow A^*$ 
		define a pre-anti-flexible algebra structure on $A^*$
		if and only if the following equations are satisfied for any $x\in A$
		\begin{subequations}
			\begin{eqnarray}\label{eq:ybe1'}
				&&((\id\otimes\id\otimes L_{\succ}(x))-
				(R_{\prec}(x)\otimes \id\otimes\id)\sigma_{13}\circ flp
				+(\id\otimes \id \otimes R_{\prec}(x))flp
				\cr&&-(L_{\succ}(x)\otimes\id \otimes \id)flp\circ  
				\sigma_{13} \circ flp)M(\mathrm{r})=0,
			\end{eqnarray}
			\begin{eqnarray}\label{eq:ybe2'}
				\begin{array}{llll}
					&&((\id \otimes\id\otimes L_{\cdot}(x))
					+(\id \otimes \id \otimes R_{\cdot}(x))flp)M'(\mathrm{r})
					\cr&&-((R_{\prec}(x)\otimes\id \otimes \id)
					+(L_{\succ}(x)\otimes \id \otimes \id)flp)P'(\mathrm{r})+R'(x)=0,
				\end{array}
			\end{eqnarray}
		\end{subequations}
		where
		$M(\mathrm{r})=\mathrm{r}_{_{\prec, 23}}\cdot \mathrm{r}_{_{\succ, 12}}+
		\mathrm{r}_{_{\prec, 21}}\prec \mathrm{r}_{_{\prec, 13}}-
		\mathrm{r}_{_{\succ, 13}} \succ \mathrm{r}_{_{\prec, 23}}$,
		\begin{eqnarray*}
			M'(\mathrm{r})&=&
			\mathrm{r}_{_{\succ, 23}}\prec \mathrm{r}_{_{\succ, 12}}+
			\mathrm{r}_{_{\succ, 21}}\succ \mathrm{r}_{_{\succ, 13}}-
			\mathrm{r}_{_{\succ, 13}}\cdot \mathrm{r}_{_{\succ,23}}+
			\mathrm{r}_{_{\succ, 23}}\succ (\mathrm{r}_{_{\prec, 12}}+
			\mathrm{r}_{_{\succ, 12}})+(\mathrm{r}_{_{\prec, 21}}+
			\mathrm{r}_{_{\succ, 21}})\prec \mathrm{r}_{_{\succ, 13}},\cr
			P'(\mathrm{r})&=&
			\mathrm{r}_{_{\prec, 32}}\prec \mathrm{r}_{_{\prec, 21}}+
			\mathrm{r}_{_{\prec, 31}}\cdot \mathrm{r}_{_{\succ, 23}}-
			\mathrm{r}_{_{\succ, 21}}\succ \mathrm{r}_{_{\prec , 31}}-
			(\mathrm{r}_{_{\prec, 21}}+\mathrm{r}_{_{\succ, 21}})\prec \mathrm{r}_{_{\prec, 31}},\cr
			R'(x)&=&
			[(\id\otimes (R_{_{\prec}}(x)+L_{_{\succ}}(x))\otimes \id)
			\mathrm{r}_{_{\prec, 32}}]\succ(\mathrm{r}_{_{\prec, 12}}+\mathrm{r}_{_{\succ, 12}})\cr
			&-&
			[((L_{_{\prec}}(x)+R_{_{\succ}}(x))\otimes\id \otimes \id)
			\mathrm{r}_{_{\prec, 31}}]\succ(\mathrm{r}_{_{\prec, 21}}+\mathrm{r}_{_{\succ, 21}}).
		\end{eqnarray*}
	\end{pro}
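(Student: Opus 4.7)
The plan is to reduce the proposition to its predecessor by pure substitution, so no new analytic content must be developed. Concretely, the previous proposition already proves that the dual maps $\Delta_{_\prec}^*, \Delta_{_\succ}^*$ define a pre-anti-flexible algebra structure on $A^*$ if and only if Eqs.~\eqref{eq:ybe1} and \eqref{eq:ybe2} hold, and the intervening remark asserts the identities
\begin{equation*}
P(\mathrm{r})=flp(M(\mathrm{r})),\qquad
N(\mathrm{r})=\sigma_{13}(flp(M(\mathrm{r}))),\qquad
Q(\mathrm{r})=flp(\sigma_{13}(flp(M(\mathrm{r})))),
\end{equation*}
together with $N'(\mathrm{r})=flp(M'(\mathrm{r}))$ and $Q'(\mathrm{r})=flp(P'(\mathrm{r}))$. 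The idea is simply to plug these identities into Eqs.~\eqref{eq:ybe1} and \eqref{eq:ybe2}, thereby collapsing four summands into two in each equation while introducing the compositions $\sigma_{13}\circ flp$ and $flp\circ\sigma_{13}\circ flp$ acting on the single tensor $M(\mathrm{r})$, and the composition $flp$ acting on $M'(\mathrm{r})$ and $P'(\mathrm{r})$. This immediately yields Eqs.~\eqref{eq:ybe1'} and \eqref{eq:ybe2'}.

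The first step is to verify the five identities in the remark. These are purely combinatorial: each of $M, N, P, Q$ is a sum of three monomials built from the legs of $\mathrm{r}_{_\prec}$ and $\mathrm{r}_{_\succ}$ placed in tensor slots, and the maps $flp$ (exchange of $\prec$ and $\succ$) and $\sigma_{13}$ (swap of slots $1$ and $3$) act on them in a transparent way. One checks the identities term by term, using the definitions $\mathrm{r}_{_{\succ,ij}}$ and $\mathrm{r}_{_{\prec,ij}}$; no pre-anti-flexible axioms are involved here, only renaming of indices.

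The second step is the substitution. In Eq.~\eqref{eq:ybe1}, replace $N(\mathrm{r})$ by $\sigma_{13}(flp(M(\mathrm{r})))$, $P(\mathrm{r})$ by $flp(M(\mathrm{r}))$, and $Q(\mathrm{r})$ by $flp(\sigma_{13}(flp(M(\mathrm{r}))))$; the resulting expression is precisely Eq.~\eqref{eq:ybe1'}. In Eq.~\eqref{eq:ybe2}, replace $N'(\mathrm{r})$ by $flp(M'(\mathrm{r}))$ and $Q'(\mathrm{r})$ by $flp(P'(\mathrm{r}))$; grouping the two $M'$-terms and the two $P'$-terms gives Eq.~\eqref{eq:ybe2'}, while the leftover $R'(x)$ remains unchanged.

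The step I expect to be the most delicate is the bookkeeping for the identity $N(\mathrm{r})=\sigma_{13}(flp(M(\mathrm{r})))$, since it combines a flip of the product symbols with a swap of two tensor legs; one must carefully track which leg of $\mathrm{r}_{_\prec}$ or $\mathrm{r}_{_\succ}$ ends up in each slot after the composition. Once this is in place, the rest of the argument is a direct rewriting and the equivalence of Eqs.~\eqref{eq:ybe1}--\eqref{eq:ybe2} with Eqs.~\eqref{eq:ybe1'}--\eqref{eq:ybe2'} is immediate, yielding the proposition.
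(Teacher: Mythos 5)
Your proposal is correct and is exactly the route the paper takes: the proposition is stated immediately after the remark giving $P(\mathrm{r})=flp(M(\mathrm{r}))$, $N(\mathrm{r})=\sigma_{13}(flp(M(\mathrm{r})))$, $Q(\mathrm{r})=flp(\sigma_{13}(flp(M(\mathrm{r}))))$, $N'(\mathrm{r})=flp(M'(\mathrm{r}))$, $Q'(\mathrm{r})=flp(P'(\mathrm{r}))$, with no further argument offered, the intended proof being precisely the substitution of these identities into Eqs.~\eqref{eq:ybe1} and \eqref{eq:ybe2}. The only caution is the one you already flag: for the five identities to hold term by term, $flp$ must be read as reversing the two operands as well as exchanging $\prec$ and $\succ$ (so that, e.g., $flp(\mathrm{r}_{_{\prec,23}}\cdot\mathrm{r}_{_{\succ,12}})=\mathrm{r}_{_{\succ,12}}\cdot\mathrm{r}_{_{\prec,23}}$), after which the verification is indeed pure index bookkeeping involving no pre-anti-flexible axioms.
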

	\begin{thm}\label{thm_coboundary}
		Let $(A, \prec, \succ)$ be a pre-anti-flexible algebra and 
		$\mathrm{r}_{_\prec}, \mathrm{r}_{_\succ} \in A\otimes A$. Define the linear maps
		$\Delta_{_{\prec}}, \Delta_{_{\succ}}:A\rightarrow A\otimes A$ by 
		Eq.~\eqref{eq:coboundary}.
		Then  $(A, A^*)$ is a pre-anti-flexible bialgebra 
		if and only if $\mathrm{r}_{_\prec}, \mathrm{r}_{_\succ}$ satisfy 
		Eqs.~\eqref{eq:coboundary1}~-~\eqref{eq:coboundary4}, 
		Eqs.~\eqref{eq:ybe1'} and \eqref{eq:ybe2'}.
	\end{thm}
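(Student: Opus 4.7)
The plan is to assemble this theorem as a direct corollary of the two preceding structural results in this section, so no fresh computation is needed. By the definition of a pre-anti-flexible bialgebra, the pair $(A, A^*)$ is a pre-anti-flexible bialgebra if and only if three conditions hold simultaneously: (i) the dual maps $\beta_{_\prec}^*, \beta_{_\succ}^*$ induce a pre-anti-flexible structure on $A$, which is granted by hypothesis (we start with the pre-anti-flexible algebra $(A, \prec, \succ)$); (ii) the dual maps $\Delta_{_\prec}^*, \Delta_{_\succ}^*$ induce a pre-anti-flexible structure on $A^*$; and (iii) the coproducts $\Delta_{_\prec}, \Delta_{_\succ}$ satisfy the matched-pair compatibility equations \eqref{eq_pre_matched_1}, \eqref{eq_pre_matched_3}, \eqref{eq_pre_matched_2'} and \eqref{eq_pre_matched_4'}.

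For condition (iii), I would invoke the theorem proved earlier in this section, which asserts that when $\Delta_{_\prec}, \Delta_{_\succ}$ are given by the coboundary formula \eqref{eq:coboundary}, the four matched-pair equations are equivalent to the four tensor-level conditions \eqref{eq:coboundary1}--\eqref{eq:coboundary4} on $\mathrm{r}_{_\prec}$ and $\mathrm{r}_{_\succ}$. For condition (ii), I would then apply the preceding proposition, which translates the requirement that $\Delta_{_\prec}^*, \Delta_{_\succ}^*$ give a pre-anti-flexible structure on $A^*$ into the two cubic (Yang-Baxter-type) relations \eqref{eq:ybe1'} and \eqref{eq:ybe2'}.

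Conjoining these two equivalences yields exactly the claim: $(A, A^*)$ is a pre-anti-flexible bialgebra if and only if Eqs.~\eqref{eq:coboundary1}--\eqref{eq:coboundary4}, \eqref{eq:ybe1'} and \eqref{eq:ybe2'} all hold. Since both directions of each constituent equivalence were already established, no further calculation is required; the proof amounts to recording the logical conjunction.

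The main obstacle, such as it is, is purely bookkeeping: one has to verify that the two cited results are truly applicable simultaneously and that their hypotheses do not overlap in a way that would weaken the conclusion. In particular, both cited results assume $\Delta_{_\prec}, \Delta_{_\succ}$ are defined by \eqref{eq:coboundary}, so the hypotheses line up precisely. I would therefore present the proof as a short two-line argument citing both results in succession, rather than re-deriving any of the tensor identities.
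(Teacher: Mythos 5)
Your proposal is correct and is essentially the argument the paper intends: the theorem is stated without a separate proof precisely because it is the conjunction of the preceding theorem (which, under the standing assumption that $\Delta_{_\prec}^*,\Delta_{_\succ}^*$ define a pre-anti-flexible structure on $A^*$, reduces the matched-pair/bialgebra conditions to Eqs.~\eqref{eq:coboundary1}--\eqref{eq:coboundary4}) and the preceding proposition (which characterizes that standing assumption by Eqs.~\eqref{eq:ybe1'} and \eqref{eq:ybe2'}). Your bookkeeping remark is also on point: the equivalences \eqref{eq_pre_matched_1}$\Leftrightarrow$\eqref{eq:coboundary1}, etc., are derived using only the identities of $(A,\prec,\succ)$ and its regular bimodule, so the two cited results combine without circularity.
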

	In view of symmetries brought out by equations characterizing 
	pre-anti-flexible bialgebras provided by the linear maps given 
	by Eq.~\eqref{eq:coboundary}, i.e. 
	Eqs.~\eqref{eq:coboundary1}~-~\eqref{eq:coboundary4}, 
	Eqs.~\eqref{eq:ybe1'} and \eqref{eq:ybe2'}, we will consider pre-anti-flexible balgebras 
	generated by $\mathrm{r}\in A\otimes A$ in the following cases. 
	\begin{itemize}
		\item[Case1]
		\begin{eqnarray}\label{eq:particular1-r}
			\mathrm{r}_{_{\prec}}=\mathrm{r},\;\; \mathrm{r}_{_{\succ}}=
			-\sigma \mathrm{r}, \quad \mathrm{r}\in A\otimes A.
		\end{eqnarray}
		\begin{cor}
			Let $(A, \prec, \succ)$ be a pre-anti-flexible algebra and 
			$\mathrm{r}\in A\otimes A$. Then the maps $\Delta_{_{\prec}}, \Delta_{_{\succ}}$
			defined by Eq.~\eqref{eq:coboundary} with $\mathrm{r}_{_{\succ}}, \mathrm{r}_{_{\prec}}$ given by
			Eq.~\eqref{eq:particular1-r} induce a pre-anti-flexible algebra structure on $A^*$
			such that  $(A, A^*)$ is a pre-anti-flexible bialgebra if and only if 
			$\mathrm{r}$ satisfies the following equations 
			\begin{subequations}
				\begin{eqnarray}\label{eq:A}
					\begin{array}{lll}
						&&(\id\otimes(L_{_{\succ}}(x\prec y)+R_{_{\prec}}(y\succ x))+
						((L_{_{\succ}}(y\prec x)+R_{_{\prec}}(x\succ y))\otimes \id) 
						(\mathrm{r}-\sigma \mathrm{r})\cr
						&&-
						(R_{_{\prec}}(y)\otimes L_{_{\succ}}(x) +
						L_{_{\succ}}(y)\otimes R_{_{\prec}}(x) )
						(\mathrm{r}-\sigma \mathrm{r})=0.
					\end{array}
				\end{eqnarray}
				\begin{eqnarray}\label{eq:B}
					&&0=((R_{_{\prec}}(x)\otimes R_{_{\prec}}(x)+ 
					L_{_{\succ}}(x)\otimes L_{_{\succ}}(y))+(L_{_{\succ}}(y)\otimes L_{_{\succ}}(x) +
					R_{_{\prec}}(y)\otimes R_{_{\prec}}(x))
					)(\mathrm{r}-\sigma \mathrm{r})\cr
					&&+
					((R_{_{\prec}}(x)L_{_{\succ}}(y)+L_{_{\succ}}(x)R_{_{\prec}}(y))\otimes \id+
					(\id\otimes(L_{_{\succ}}(x)R_{_{\prec}}(y)+
					R_{_{\prec}}(x)L_{_{\succ}}(y))))(\sigma \mathrm{r}- \mathrm{r})
				\end{eqnarray}
				\begin{eqnarray}\label{eq:C}
					&&((\id\otimes\id\otimes L_{\succ}(x))-
					(R_{\prec}(x)\otimes \id\otimes\id)\sigma_{13}\circ flp
					+(\id\otimes \id \otimes R_{\prec}(x))flp\cr&&-
					(L_{\succ}(x)\otimes\id \otimes \id)flp\circ  \sigma_{13} \circ flp)M_1(\mathrm{r})=0,
				\end{eqnarray}
				\begin{eqnarray}\label{eq:D}
					\begin{array}{llll}
						&&((\id \otimes\id\otimes L_{\cdot}(x))
						+(\id \otimes \id \otimes R_{\cdot}(x))flp)M_1'(\mathrm{r})+R_1'(x)
						\cr&&-((R_{\prec}(x)\otimes\id \otimes \id)
						+(L_{\succ}(x)\otimes \id \otimes \id)flp)P_1'(\mathrm{r})=0,
					\end{array}
				\end{eqnarray}
			\end{subequations}
			where $x,y\in A$, 
			\begin{eqnarray*}
				M_1(\mathrm{r})&=&-\mathrm{r}_{_{23}}\cdot \mathrm{r}_{_{21}}+
				\mathrm{r}_{_{21}}\prec \mathrm{r}_{_{13}}+
				\mathrm{r}_{_{31}} \succ \mathrm{r}_{_{23}}\cr 
				M_1'(\mathrm{r})&=&
				\mathrm{r}_{_{32}}\prec \mathrm{r}_{_{21}}+
				\mathrm{r}_{_{12}}\succ  \mathrm{r}_{_{31}}-
				\mathrm{r}_{_{31}}\cdot \mathrm{r}_{_{32}}-
				\mathrm{r}_{_{32}}\succ (\mathrm{r}_{_{12}}- \mathrm{r}_{_{21}})-
				(\mathrm{r}_{_{21}}- \mathrm{r}_{_{12}})\prec \mathrm{r}_{_{31}},\cr
				P_1'(\mathrm{r})&=&\mathrm{r}_{_{32}}\prec \mathrm{r}_{_{21}}-
				\mathrm{r}_{_{31}}\cdot  \mathrm{r}_{_{32}}+
				\mathrm{r}_{_{12}}\succ \mathrm{r}_{_{31}}-
				(\mathrm{r}_{_{21}}- \mathrm{r}_{_{12}})\prec \mathrm{r}_{_{31}}, \; \; \mbox{ and }\cr
				R_1'(x)&=&[(\id\otimes (R_{_{\prec}}(x)+
				L_{_{\succ}}(x))\otimes \id)\mathrm{r}_{_{\prec, 32}}]
				\succ(\mathrm{r}_{_{12}}-\mathrm{r}_{_{21}})\cr&-&
				[((L_{_{\prec}}(x)+R_{_{\succ}}(x))\otimes\id \otimes \id)\mathrm{r}_{_{31}}]
				\succ(\mathrm{r}_{_{21}}-\mathrm{r}_{_{12}}).
			\end{eqnarray*}
		\end{cor}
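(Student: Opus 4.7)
The plan is to derive this corollary by specializing Theorem~\ref{thm_coboundary} to the case $\mathrm{r}_{_\prec}=\mathrm{r}$, $\mathrm{r}_{_\succ}=-\sigma\mathrm{r}$, and to show that under this choice the six conditions \eqref{eq:coboundary1}--\eqref{eq:coboundary4}, \eqref{eq:ybe1'}--\eqref{eq:ybe2'} reduce respectively to the four stated equations \eqref{eq:A}--\eqref{eq:D}. The whole proof therefore consists of a clean substitution argument, so I would not grind through every term, but rather isolate the two structural simplifications that drive the reduction.

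The first key observation is that with the choice \eqref{eq:particular1-r} we have the identities
\begin{equation*}
\mathrm{r}_{_\succ}+\sigma\mathrm{r}_{_\prec}=0,\qquad \mathrm{r}_{_\prec}+\sigma\mathrm{r}_{_\succ}=0,\qquad \mathrm{r}_{_\prec}+\mathrm{r}_{_\succ}=\mathrm{r}-\sigma\mathrm{r}=-(\sigma\mathrm{r}_{_\prec}+\sigma\mathrm{r}_{_\succ}).
\end{equation*}
Since \eqref{eq:coboundary1} and \eqref{eq:coboundary2} are pure multiples of the factor $\mathrm{r}_{_\succ}+\sigma\mathrm{r}_{_\prec}$, both are automatically satisfied and may be dropped. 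For \eqref{eq:coboundary3}, the first summand and the ``$(y\prec x,x\succ y)$''--block carrying the factor $\sigma\mathrm{r}_{_\succ}+\sigma\mathrm{r}_{_\prec}$ combine with the middle summand carrying $\mathrm{r}_{_\succ}+\mathrm{r}_{_\prec}$ after a sign flip, and what remains is precisely \eqref{eq:A}. An analogous collapse in \eqref{eq:coboundary4}: the last two summands (factors $\mathrm{r}_{_\succ}+\sigma\mathrm{r}_{_\prec}$ and $\mathrm{r}_{_\prec}+\sigma\mathrm{r}_{_\succ}$) vanish outright, while the first two are rewritten using $\mathrm{r}-\sigma\mathrm{r}=-(\sigma\mathrm{r}-\mathrm{r})$, yielding \eqref{eq:B}.

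The second key observation handles \eqref{eq:ybe1'} and \eqref{eq:ybe2'}. If $\mathrm{r}=\sum c_i\otimes d_i$, then $\mathrm{r}_{_\succ}=-\sum d_i\otimes c_i$, which gives, position by position in the triple-tensor notation of the paper, the identity
\begin{equation*}
\mathrm{r}_{_{\succ,ij}}=-\mathrm{r}_{_{\prec,ji}}=-\mathrm{r}_{ji}\qquad(\text{where }\mathrm{r}_{jk}:=\mathrm{r}_{_{\prec,jk}}).
\end{equation*}
Substituting this identity into the expressions $M(\mathrm{r}),M'(\mathrm{r}),P'(\mathrm{r}),R'(x)$ (and noting that $N,Q,N',Q'$ are already determined by $M,M',P'$ through the $flp$-symmetries spelled out in the remark following Theorem~\ref{thm_coboundary}) collapses them termwise into $M_1,M_1',P_1',R_1'$. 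Plugging these into \eqref{eq:ybe1'} and \eqref{eq:ybe2'} yields \eqref{eq:C} and \eqref{eq:D} with no further manipulation.

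The main obstacle is the bookkeeping in identifying $\mathrm{r}_{_{\succ,12}}$ with $-\mathrm{r}_{21}$, $\mathrm{r}_{_{\succ,13}}$ with $-\mathrm{r}_{31}$, etc., and tracking the resulting signs across the many terms of $M',P'$; once that dictionary is established the matches are exact. The converse direction is immediate: since \eqref{eq:A}--\eqref{eq:D} were obtained by \emph{equivalent} substitutions in \eqref{eq:coboundary1}--\eqref{eq:coboundary4} and \eqref{eq:ybe1'}--\eqref{eq:ybe2'}, they imply the hypotheses of Theorem~\ref{thm_coboundary} for our special $\mathrm{r}_{_\prec},\mathrm{r}_{_\succ}$, so $(A,A^*)$ is a pre-anti-flexible bialgebra, concluding the equivalence.
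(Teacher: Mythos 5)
Your proposal is correct and follows exactly the route the paper intends: the corollary is presented as the Case 1 specialization $\mathrm{r}_{_\prec}=\mathrm{r}$, $\mathrm{r}_{_\succ}=-\sigma\mathrm{r}$ of Theorem~\ref{thm_coboundary}, under which $\mathrm{r}_{_\succ}+\sigma\mathrm{r}_{_\prec}=\mathrm{r}_{_\prec}+\sigma\mathrm{r}_{_\succ}=0$ kills Eqs.~\eqref{eq:coboundary1}, \eqref{eq:coboundary2} and the last two blocks of \eqref{eq:coboundary4}, while the dictionary $\mathrm{r}_{_{\succ,ij}}=-\mathrm{r}_{ji}$ turns $M,M',P',R'$ into $M_1,M_1',P_1',R_1'$, giving \eqref{eq:A}--\eqref{eq:D}. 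The substitutions check out term by term, so nothing further is needed.
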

		\begin{rmk}
			It is straight to identify $M_1'(\mathrm{r})=
			P_1'(\mathrm{r})-\mathrm{r}_{_{32}}\succ (\mathrm{r}_{_{12}}- \mathrm{r}_{_{21}})$ and 
			setting in addition  
			$\sigma_{_{123}}: A\otimes A\otimes A \rightarrow A\otimes A\otimes A$, by 
			$\sigma_{_{123}}(x\otimes y\otimes z)=z\otimes y\otimes x$, we have
			\begin{eqnarray*}
				M_1'(\mathrm{r})=\sigma_{_{123}}(M_1(\mathrm{r}))-
				\mathrm{r}_{_{32}}\succ (\mathrm{r}_{_{12}}- \mathrm{r}_{_{21}})-
				(\mathrm{r}_{_{21}}- \mathrm{r}_{_{12}})\prec \mathrm{r}_{_{31}}.
			\end{eqnarray*} 
			Besides, if in addition $\mathrm{r}$ commutes then $R_1'(x)=0$ and 
			Eqs.~\eqref{eq:A} and \eqref{eq:B} 
			are satisfied and finally $\mathrm{r}$ satisfies the following equation
			\begin{eqnarray}\label{eq:AFPYBE}
				\mathrm{r}_{_{23}}\cdot \mathrm{r}_{_{12}}=
				\mathrm{r}_{_{12}}\prec \mathrm{r}_{_{13}}+
				\mathrm{r}_{_{13}} \succ \mathrm{r}_{_{23}}.
			\end{eqnarray}
		\end{rmk}
		\item[Case2]	
		\begin{eqnarray}\label{eq:particular2-r}
			\mathrm{r}_{_{\prec}}+\mathrm{r}_{_{\succ}}=0,\; 
			\mathrm{r}_{_{\succ}}=\mathrm{r}, \quad \mathrm{r}\in A\otimes A.
		\end{eqnarray}
		\begin{cor}
			Let $(A, \prec, \succ)$ be a pre-anti-flexible algebra and 
			$\mathrm{r}\in A\otimes A$. Then the maps $\Delta_{_{\prec}}, \Delta_{_{\succ}}$
			defined by Eq.~\eqref{eq:coboundary} with 
			$\mathrm{r}_{_{\succ}}, \mathrm{r}_{_{\prec}}$ given by
			Eq.~\eqref{eq:particular2-r} induce a pre-anti-flexible algebra structure on $A^*$
			such that  $(A, A^*)$ is a pre-anti-flexible bialgebra if and only if 
			$\mathrm{r}$ satisfies the following equations for any $x,y\in A$
			\begin{subequations}
				\begin{eqnarray}\label{eq:A'}
					(R_{_\prec}(y)\otimes L_{\cdot}(x)+L_{\succ}(y)\otimes R_{\cdot}(x))(\mathrm{r}-
					\sigma \mathrm{r})=0,
				\end{eqnarray}
				\begin{eqnarray}\label{eq:B'}
					(L_{\succ}(x)\otimes L_{\cdot}(y)-R_{\prec}(y)\otimes R_{\cdot}(x)-
					L_{\succ}(y)\otimes L_{\cdot}(x)+R_{\prec}(x)\otimes R_{\cdot}(y))(\mathrm{r}-
					\sigma \mathrm{r})=0,
				\end{eqnarray}
				\begin{eqnarray}\label{eq:C'}
					(R_{_{\succ}}(y)\otimes L_{_{\succ}}(x) +
					L_{_{\prec}}(y) \otimes R_{_{\prec}}(x) )(\mathrm{r}-\sigma \mathrm{r}),
				\end{eqnarray}
				\begin{eqnarray}\label{eq:D'}
					(R_{_{\prec}}(x)\otimes R_{_{\succ}}(y)+
					L_{_{\succ}}(x)\otimes L_{_{\prec}}(y)+
					L_{_{\prec}}(y)\otimes L_{_{\succ}}(x)+
					R_{_{\succ}}(y)\otimes R_{_{\prec}}(x))(\mathrm{r}-\sigma \mathrm{r})=0,
				\end{eqnarray}
				\begin{eqnarray}\label{eq:E'}
					&&((\id\otimes\id\otimes L_{\succ}(x))-
					(R_{\prec}(x)\otimes \id\otimes\id)\sigma_{13}\circ flp
					+(\id\otimes \id \otimes R_{\prec}(x))flp
					\cr&&-(L_{\succ}(x)\otimes\id \otimes \id)flp\circ  
					\sigma_{13} \circ flp)M_2(\mathrm{r})=0,
				\end{eqnarray}
				\begin{eqnarray}\label{eq:F'}
					\begin{array}{llll}
						&&((\id \otimes\id\otimes L_{\cdot}(x))
						+(\id \otimes \id \otimes R_{\cdot}(x))flp)M_2'(\mathrm{r})
						\cr&&-((R_{\prec}(x)\otimes\id \otimes \id)
						+(L_{\succ}(x)\otimes \id \otimes \id)flp)P_2'(\mathrm{r})=0,
					\end{array}
				\end{eqnarray}
			\end{subequations}
			where
			\begin{eqnarray*}
				M_2(\mathrm{r})&=&-\mathrm{r}_{_{23}}\cdot \mathrm{r}_{_{12}}+
				\mathrm{r}_{_{21}}\prec \mathrm{r}_{_{13}}+
				\mathrm{r}_{_{13}} \succ \mathrm{r}_{_{23}},\cr
				M_2'(\mathrm{r})&=&
				-\mathrm{r}_{_{13}}\cdot \mathrm{r}_{_{23}}+
				\mathrm{r}_{_{23}}\prec \mathrm{r}_{_{12}}+
				\mathrm{r}_{_{21}}\succ \mathrm{r}_{_{13}},\cr 
				P_2'(\mathrm{r})&=&
				-\mathrm{r}_{_{31}}\cdot \mathrm{r}_{_{23}}+
				\mathrm{r}_{_{32}}\prec \mathrm{r}_{_{21}}+
				\mathrm{r}_{_{21}}\succ \mathrm{r}_{_{31}}.
			\end{eqnarray*}
		\end{cor}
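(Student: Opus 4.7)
The plan is to derive this corollary as a direct specialization of Theorem~\ref{thm_coboundary}. Since the hypothesis puts $\mathrm{r}_{_{\succ}}=\mathrm{r}$ and $\mathrm{r}_{_{\prec}}=-\mathrm{r}$, I would substitute these into the six defining conditions \eqref{eq:coboundary1}--\eqref{eq:coboundary4}, \eqref{eq:ybe1'} and \eqref{eq:ybe2'} of Theorem~\ref{thm_coboundary} and simplify. The work is bookkeeping; no new ideas are required.

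First I would record the four elementary identities that drive the collapse:
\begin{eqnarray*}
\mathrm{r}_{_{\succ}}+\mathrm{r}_{_{\prec}} &=& 0, \qquad \sigma\mathrm{r}_{_{\succ}}+\sigma\mathrm{r}_{_{\prec}}\ =\ 0,\\
\mathrm{r}_{_{\succ}}+\sigma\mathrm{r}_{_{\prec}} &=& \mathrm{r}-\sigma\mathrm{r}, \qquad \mathrm{r}_{_{\prec}}+\sigma\mathrm{r}_{_{\succ}}\ =\ -(\mathrm{r}-\sigma\mathrm{r}).
\end{eqnarray*}
Using these, Eq.~\eqref{eq:coboundary1} immediately becomes Eq.~\eqref{eq:A'}, and Eq.~\eqref{eq:coboundary2} becomes Eq.~\eqref{eq:B'}. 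In Eq.~\eqref{eq:coboundary3}, the two middle blocks (both carrying factor $\sigma\mathrm{r}_{_{\succ}}+\sigma\mathrm{r}_{_{\prec}}$) and the block involving $\mathrm{r}_{_{\succ}}+\mathrm{r}_{_{\prec}}$ vanish, leaving only $(R_{_{\succ}}(y)\otimes L_{_{\succ}}(x)+L_{_{\prec}}(y)\otimes R_{_{\prec}}(x))(\mathrm{r}_{_{\prec}}+\sigma\mathrm{r}_{_{\succ}})=0$, which after using the third identity above is Eq.~\eqref{eq:C'}. In Eq.~\eqref{eq:coboundary4}, the first two lines vanish for the same reason, while the last line produces exactly Eq.~\eqref{eq:D'} once we combine the $\mathrm{r}-\sigma\mathrm{r}$ and $\sigma\mathrm{r}-\mathrm{r}$ factors.

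Next I would deal with the Yang-Baxter-type conditions. Substituting into the definition of $M(\mathrm{r})$ produces
$M(\mathrm{r})=-\mathrm{r}_{_{23}}\cdot\mathrm{r}_{_{12}}+\mathrm{r}_{_{21}}\prec\mathrm{r}_{_{13}}+\mathrm{r}_{_{13}}\succ\mathrm{r}_{_{23}}=M_2(\mathrm{r})$, so \eqref{eq:ybe1'} turns into \eqref{eq:E'}. For \eqref{eq:ybe2'} one computes separately $M'(\mathrm{r})$, $P'(\mathrm{r})$ and $R'(x)$: the terms in $M'(\mathrm{r})$ and $P'(\mathrm{r})$ that carry the combination $\mathrm{r}_{_{\prec}}+\mathrm{r}_{_{\succ}}$ drop out, yielding $M_2'(\mathrm{r})$ and $P_2'(\mathrm{r})$ respectively; and $R'(x)$ vanishes identically because both of its factors $\mathrm{r}_{_{\prec,12}}+\mathrm{r}_{_{\succ,12}}$ and $\mathrm{r}_{_{\prec,21}}+\mathrm{r}_{_{\succ,21}}$ are zero. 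This leaves precisely Eq.~\eqref{eq:F'}.

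The only real pitfalls are sign errors arising from $\mathrm{r}_{_{\prec}}=-\mathrm{r}$ appearing in bilinear combinations such as $\mathrm{r}_{_{\prec,32}}\prec\mathrm{r}_{_{\prec,21}}$, where the two minus signs cancel, versus in mixed-type products such as $\mathrm{r}_{_{\prec,31}}\cdot\mathrm{r}_{_{\succ,23}}$ where only one sign appears. I would therefore treat each term in $M'(\mathrm{r})$, $P'(\mathrm{r})$ and $R'(x)$ one by one rather than trying to telescope them collectively. Apart from this careful case-by-case substitution, the corollary is a formal consequence of Theorem~\ref{thm_coboundary}, so the conclusion follows immediately once all six reductions are verified.
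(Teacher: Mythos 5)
Your proposal is correct and follows exactly the route the paper intends: the corollary is a direct specialization of Theorem~\ref{thm_coboundary} obtained by substituting $\mathrm{r}_{_{\succ}}=\mathrm{r}$, $\mathrm{r}_{_{\prec}}=-\mathrm{r}$ into Eqs.~\eqref{eq:coboundary1}--\eqref{eq:coboundary4}, \eqref{eq:ybe1'} and \eqref{eq:ybe2'}, and your bookkeeping (the vanishing of all blocks carrying $\mathrm{r}_{_{\prec}}+\mathrm{r}_{_{\succ}}$ or $\sigma\mathrm{r}_{_{\prec}}+\sigma\mathrm{r}_{_{\succ}}$, the reduction of the remaining blocks to multiples of $\mathrm{r}-\sigma\mathrm{r}$, and the identities $M(\mathrm{r})=M_2(\mathrm{r})$, $M'(\mathrm{r})=M_2'(\mathrm{r})$, $P'(\mathrm{r})=P_2'(\mathrm{r})$, $R'(x)=0$) all checks out. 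The only blemish is that the surviving block of Eq.~\eqref{eq:coboundary3} uses your fourth listed identity, not the third, but since the expression is equated to zero the overall sign is immaterial.
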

		\begin{rmk}
			Clearly, we have $P_2'(\mathrm{r})=\sigma_{_{123}}(M_2(\mathrm{r}))$ 
			and if $\mathrm{r}$  commutes then 
			$P_2'(\mathrm{r})=M_2'(\mathrm{r})$ and Eqs.~\eqref{eq:A'}~-~\eqref{eq:D'} 
			are satisfied and finally
			$\mathrm{r}$ satisfied Eq.~\eqref{eq:AFPYBE}.
		\end{rmk}
	\end{itemize}
	We finally deduct the following pre-anti-flexible bialgebras 
	provided by a given $\mathrm{r}\in A\otimes A$ possessing 
	some internal symmetries while browsing $A\otimes A$.
	\begin{cor}
		Let $(A, \prec, \succ)$ be a pre-anti-flexible algebra and 
		consider symmetric element  $\mathrm{r}\in A\otimes A$
		satisfying Eq.~\eqref{eq:AFPYBE}. Then the linear maps 
		$\Delta_{_{\prec}}, \Delta_{_{\succ}}$
		defined by Eq.~\eqref{eq:coboundary} with $\mathrm{r}_{_{\succ}}=\mathrm{r}$ 
		and $\mathrm{r}_{_{\prec}}=-\mathrm{r}$ induce a 
		pre-anti-flexible algebra structure on $A^*$ such that 
		$(A, A^*)$ is a pre-anti-flexible bialgebra.  
	\end{cor}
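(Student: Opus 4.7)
The plan is to apply Theorem~\ref{thm_coboundary} in the specialisation provided by its Case~2 corollary, namely the instance $\mathrm{r}_{_{\prec}}+\mathrm{r}_{_{\succ}}=0$ of Eq.~\eqref{eq:particular2-r}, with the choice $\mathrm{r}_{_{\succ}}=\mathrm{r}$ and $\mathrm{r}_{_{\prec}}=-\mathrm{r}$. Under this identification, showing that $(A,A^{*})$ is a pre-anti-flexible bialgebra reduces to checking the six identities \eqref{eq:A'}--\eqref{eq:F'}.

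The first four identities collapse at once. Each of \eqref{eq:A'}, \eqref{eq:B'}, \eqref{eq:C'} and \eqref{eq:D'} is obtained by applying a tensor of left/right multiplication operators to the element $(\mathrm{r}-\sigma\mathrm{r})\in A\otimes A$, and the symmetry hypothesis makes this element vanish; hence these four identities are trivial and require no further verification.

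For \eqref{eq:E'} the key fact is that $\sigma\mathrm{r}=\mathrm{r}$ forces $\mathrm{r}_{_{ij}}=\mathrm{r}_{_{ji}}$ in $A^{\otimes 3}$. Substituting $\mathrm{r}_{_{21}}=\mathrm{r}_{_{12}}$ in the defining expression for $M_2(\mathrm{r})$ yields $-\mathrm{r}_{_{23}}\cdot\mathrm{r}_{_{12}}+\mathrm{r}_{_{12}}\prec\mathrm{r}_{_{13}}+\mathrm{r}_{_{13}}\succ\mathrm{r}_{_{23}}$, which is precisely the left-hand side minus the right-hand side of \eqref{eq:AFPYBE} and therefore vanishes by hypothesis. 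The whole left-hand side of \eqref{eq:E'} is then a linear operator applied to $M_2(\mathrm{r})=0$, so it is zero as required. For \eqref{eq:F'} I would combine the two observations highlighted in the remark following the Case~2 corollary: first, the identity $P_2'(\mathrm{r})=\sigma_{_{123}}(M_2(\mathrm{r}))$ gives $P_2'(\mathrm{r})=0$ as soon as $M_2(\mathrm{r})=0$; second, the substitutions $\mathrm{r}_{_{31}}=\mathrm{r}_{_{13}}$, $\mathrm{r}_{_{32}}=\mathrm{r}_{_{23}}$, $\mathrm{r}_{_{21}}=\mathrm{r}_{_{12}}$ afforded by symmetry identify $P_2'(\mathrm{r})$ with $M_2'(\mathrm{r})$, whence $M_2'(\mathrm{r})=0$ as well. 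Both operator combinations appearing in \eqref{eq:F'} therefore act on the zero tensor, and the identity holds. With all six conditions verified, Theorem~\ref{thm_coboundary} yields the pre-anti-flexible bialgebra structure on $(A,A^{*})$.

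The main delicate point is the symmetric handling of the $\mathrm{r}_{_{ij}}$ under the assumption $\sigma\mathrm{r}=\mathrm{r}$: one has to check that each product $\mathrm{r}_{_{ij}}*\mathrm{r}_{_{kl}}$ with $*\in\{\cdot,\prec,\succ\}$ is invariant under swapping $i\leftrightarrow j$ or $k\leftrightarrow l$. This follows from rewriting each product as a linear operator applied to $\mathrm{r}\otimes\mathrm{r}\in A^{\otimes 4}$, but requires careful bookkeeping of which slot carries which component of $\mathrm{r}$; once this is in hand, the verifications in the previous paragraph are pure substitution.
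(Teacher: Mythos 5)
Your argument is correct and is essentially the route the paper itself takes: the corollary is an immediate consequence of the Case~2 corollary together with the remark that follows it, where the paper records exactly that symmetry of $\mathrm{r}$ kills \eqref{eq:A'}--\eqref{eq:D'} (each being an operator applied to $\mathrm{r}-\sigma\mathrm{r}$), that $P_2'(\mathrm{r})=\sigma_{_{123}}(M_2(\mathrm{r}))=M_2'(\mathrm{r})$ for symmetric $\mathrm{r}$, and that $M_2(\mathrm{r})=0$ is equivalent to Eq.~\eqref{eq:AFPYBE}. Your added care about the invariance of the products $\mathrm{r}_{_{ij}}*\mathrm{r}_{_{kl}}$ under index swaps is a reasonable bookkeeping check and does not change the substance.
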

	\begin{defi}
		Let $(A, \prec, \succ)$ be a  pre-anti-flexible algebra and $\mathrm{r} \in A \otimes A$. 
		The Eq.~\eqref{eq:AFPYBE} is called the
		\textbf{pre-anti-flexible Yang-Baxter equation} (PAFYBE) in $(A, \prec, \succ)$.
	\end{defi}
	\begin{rmk}
		We due the notion of pre-anti-flexible Yang-Baxter equation in pre-anti-flexible algebras
		as  an analogue of the anti-flexible Yang-Baxter
		equation in  anti-flexible algebras (\cite{DBH3}) or
		classical Yang-Baxter equation in   Lie algebras (\cite{Drinfeld}) or
		the associative Yang-Baxter equation in   associative algebras (\cite{Aguiar, Bai_Double})  
		and $\mathcal{D}$-equation in dendriform algebras (\cite{Bai_Double}).
		
		For no other specific reason than which showing that 
		both dendriform   and pre-anti-flexible algebras possessing the 
		same shape of  dual bimodules (see Remark~\ref{rmk_useful}~\eqref{dual-bimodule}), 
		to our amazement, $\mathcal{D}$-equation 
		in dendriform algebras and PAFYBE in  pre-anti-flexible algebras 
		own the same form translated by Eq.~\eqref{eq:AFPYBE}.
		This could making parallel with associative Yang-Baxter equation in associative algebras 
		and anti-flexible Yang-Baxter equation in anti-flexible algebras (\cite{DBH3}).
	\end{rmk}
	\section{Solutions of the pre-anti-flexible Yang-Baxter equation}\label{section5}
	Let $A$ be a vector space. For any $\mathrm{r} \in A \otimes  A$, $\mathrm{r}$ 
	can be regarded as a linear map $\mathrm{r}:A^*\rightarrow A$
	in the following way:
	\begin{equation*}
		\langle \mathrm{r}, u^*\otimes v^*\rangle=
		\langle \mathrm{r}(u^*), v^* \rangle, \;\; \forall u^*, v^*\in A^*.
	\end{equation*}
	As PAFYBE in  pre-anti-flexible algebras have the same form of the
	$\mathcal{D}$-equation in dendriform algebra, we omitted proofs (too similar 
	to the case of dendriform algebra and related $\mathcal{D}$-equation) of the following 
	in which $(A, \prec, \succ)$ is a pre-anti-flexible algebra.
	\begin{pro}\label{pro_pre_anti_flexible dual}
		For a given  $\mathrm{r}\in A\otimes A$, $r$  is a symmetric solution of 
		the PAFYBE in $A$ if and only if for any $x\in A$ and any $a, b\in A^*$
		\begin{eqnarray}\label{eq:pre-anti-flexible-dual}
			\begin{array}{llllllllll}
				a\prec b&=&-R_{_{\succ}}^*(\mathrm{r}(a))b+L_{\cdot}^*(\mathrm{r}(b))a,\; 
				a\succ b= R^*_{\cdot}(\mathrm{r}(a))b-L_{_{\prec}}^*(\mathrm{r}(b))a,\;\cr
				a\cdot  b&=& a\prec b+a\succ b = 
				R^*_{_{\prec}}(\mathrm{r}(a))b+L_{_{\succ}}^*(\mathrm{r}(b))a, \;\cr
				x\prec a&=& x\prec \mathrm{r}(a)+\mathrm{r}(R^*_{_{\succ}}(x)a)-R^*_{_{\succ}}(x)a,\;
				x\succ a=x\succ \mathrm{r}(a)-\mathrm{r}(R^*_{\cdot}(x)a)+R_{\cdot}^*(x)a,  \cr
				x\cdot a&=&x\cdot \mathrm{r}(a)-R^*_{_{\prec}}(x)a+R^*_{_{\prec}}(x)a,\; 
				a\cdot x= \mathrm{r}(a)\cdot x-\mathrm{r}(L^*_{_{\succ}}(x)a)+L^*_{_{\succ}}(x)a,  \cr
				a\prec x&=&\mathrm{r}(a)\prec x- \mathrm{r}(L^*_{\cdot}(x)a)+L^*_{\cdot}(x)a, \;
				a\succ x=\mathrm{r}(a)\succ x+ \mathrm{r}(L^*_{_{\prec}}(x)a)-L^*_{_{\prec}}(x)a.
			\end{array}
		\end{eqnarray}
	\end{pro}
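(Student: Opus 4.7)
The plan is to verify each identity by direct computation, exploiting the combination of the coboundary definition of $\Delta_\prec,\Delta_\succ$, the matched pair structure on the double $A\oplus A^*$, and the single additional hypothesis that $\sigma\mathrm{r}=\mathrm{r}$. First I would invoke the corollary preceding the definition of PAFYBE, which (for $\mathrm{r}$ symmetric) tells us to set $\mathrm{r}_{_\succ}=\mathrm{r}$, $\mathrm{r}_{_\prec}=-\mathrm{r}$ in Eq.~\eqref{eq:coboundary}, so that
\begin{equation*}
\Delta_{_\succ}(x)=(\id\otimes L_{\cdot}(x))\mathrm{r}-(R_{\prec}(x)\otimes\id)\mathrm{r},\qquad \Delta_{_\prec}(x)=-(\id\otimes L_{\succ}(x))\mathrm{r}+(R_{\cdot}(x)\otimes\id)\mathrm{r}.
\end{equation*}
Viewing $\mathrm{r}:A^*\to A$, symmetry is equivalent to $\langle u^*,\mathrm{r}(v^*)\rangle=\langle v^*,\mathrm{r}(u^*)\rangle$, which I will use as the one and only ``swap'' identity in each calculation.

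Next I would derive the three identities for $a\prec b,\ a\succ b,\ a\cdot b$ on $A^*$ by pairing $\Delta_{_\succ}(x)$ and $\Delta_{_\prec}(x)$ against $a\otimes b$. For example,
\begin{equation*}
\langle a\succ b,x\rangle=\langle\mathrm{r}(a),L^*_{\cdot}(x)b\rangle-\langle\mathrm{r}(R^*_{\prec}(x)a),b\rangle;
\end{equation*}
the first term is $\langle b,x\cdot\mathrm{r}(a)\rangle=\langle R^*_{\cdot}(\mathrm{r}(a))b,x\rangle$, and the second, after one application of the symmetry of $\mathrm{r}$, becomes $\langle a,\mathrm{r}(b)\prec x\rangle=\langle L^*_{\prec}(\mathrm{r}(b))a,x\rangle$. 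This gives $a\succ b=R^*_{\cdot}(\mathrm{r}(a))b-L^*_{\prec}(\mathrm{r}(b))a$, and the identities for $\prec$ and $\cdot$ follow the same template with different pairs of coboundary terms.

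Then I would treat the mixed bimodule identities $x\diamond a$ and $a\diamond x$ for $\diamond\in\{\prec,\succ,\cdot\}$. Using the matched pair on the double constructed in the theorems of Section~\ref{section3}, each product $(x+0)\diamond(0+a)$ splits as an $A$-part coming from the $A^*$-action on $A$ (through $L^*_\circ$, $R^*_{\succ_{A^*}}$ etc.) and an $A^*$-part coming from the dual $A$-action (such as $R^*_\succ(x)a$ or $R^*_\cdot(x)a$). The $A^*$-parts match the claimed formulas immediately. For the $A$-parts I would re-express the $A^*$-products via Paragraph~2 and apply symmetry of $\mathrm{r}$ exactly once per term; e.g.\ $L^*_\circ(a)x=x\prec\mathrm{r}(a)+\mathrm{r}(R^*_{\succ}(x)a)$ follows from rewriting $\langle L^*_\circ(a)x,b\rangle=\langle x,a\cdot_{A^*}b\rangle$ using $a\cdot b=R^*_{\prec}(\mathrm{r}(a))b+L^*_{\succ}(\mathrm{r}(b))a$ and then swapping the two legs of $\mathrm{r}$ in the resulting term $\langle a,\mathrm{r}(b)\succ x\rangle$.

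For the converse, I would run the computations of Paragraphs~2--3 in reverse: the identities for $a\prec b$ and $a\succ b$ determine $\Delta_{_\prec}^*,\Delta_{_\succ}^*$ and hence $\Delta_{_\prec},\Delta_{_\succ}$ in precisely the coboundary form of Eq.~\eqref{eq:coboundary} with $\mathrm{r}_{_\succ}=\mathrm{r},\ \mathrm{r}_{_\prec}=-\mathrm{r}$, while the compatibility between left and right occurrences of $\mathrm{r}$ in the identities forces $\sigma\mathrm{r}=\mathrm{r}$. Applying Theorem~\ref{thm_coboundary} to these $\Delta$'s, all of Eqs.~\eqref{eq:coboundary1}--\eqref{eq:coboundary4} trivialize on the symmetric part $\mathrm{r}-\sigma\mathrm{r}=0$, leaving only the scalar equation Eq.~\eqref{eq:AFPYBE}, i.e.\ PAFYBE. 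The main obstacle throughout is not any individual step but the bookkeeping: each of the eight identities requires verifying which factor of $\mathrm{r}$ is being acted on and matching the one unavoidable symmetry swap to the correct term, exactly as in the analogous dendriform/$\mathcal{D}$-equation proof to which the authors defer.
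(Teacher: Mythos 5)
The paper does not actually prove this proposition: it is introduced by the sentence ``we omitted proofs (too similar to the case of dendriform algebra and related $\mathcal{D}$-equation)'', so there is no argument of the author's to compare yours against. Judged on its own, the forward direction of your plan is correct and is surely what the omitted proof looks like: with $\mathrm{r}_{_\succ}=\mathrm{r}$, $\mathrm{r}_{_\prec}=-\mathrm{r}$ and $\sigma\mathrm{r}=\mathrm{r}$ in Eq.~\eqref{eq:coboundary}, pairing $\Delta_{_\succ}(x)$ and $\Delta_{_\prec}(x)$ against $a\otimes b$ gives exactly the three formulas on $A^*$ (your sample computation of $a\succ b$ checks out), and the mixed products then drop out of the matched-pair formulas for the double together with one symmetry swap per term, e.g. $L^*_{\circ}(a)x=x\prec\mathrm{r}(a)+\mathrm{r}(R^*_{\succ}(x)a)$ and $-L^*_{\prec_{A^*}}(a)x=x\succ\mathrm{r}(a)-\mathrm{r}(R^*_{\cdot}(x)a)$, as you indicate.

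The gap is in the converse, and it is structural. Your own bookkeeping shows that every identity in \eqref{eq:pre-anti-flexible-dual} is derived from the coboundary definition and the symmetry of $\mathrm{r}$ alone --- PAFYBE is never invoked in the forward computations. Consequently ``running the computations in reverse'' can only recover the coboundary form of $\Delta_{_\prec},\Delta_{_\succ}$ and the symmetry of $\mathrm{r}$; it cannot produce Eq.~\eqref{eq:AFPYBE}. You must say explicitly where PAFYBE is encoded in the list: either in the requirement that the left-hand products exist at all (i.e.\ that $\Delta^*_{_\prec},\Delta^*_{_\succ}$ really define a pre-anti-flexible structure on $A^*$ and that the stated multiplications make $A\oplus A^*$ an algebra), or, more directly, by combining the displayed formulas to extract the $\mathcal{O}$-operator identity $\mathrm{r}(a)\cdot\mathrm{r}(b)=\mathrm{r}(R^*_{\prec}(\mathrm{r}(a))b+L^*_{\succ}(\mathrm{r}(b))a)$, which a later theorem of the paper shows is equivalent to PAFYBE for symmetric $\mathrm{r}$. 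Moreover the specific mechanism you propose --- that the bialgebra conditions ``trivialize on $\mathrm{r}-\sigma\mathrm{r}=0$, leaving only'' PAFYBE --- is not correct as stated: for symmetric $\mathrm{r}$ the surviving conditions \eqref{eq:E'} and \eqref{eq:F'} assert only that certain operators annihilate $M_2(\mathrm{r})$, $M_2'(\mathrm{r})$ and $P_2'(\mathrm{r})$, which is implied by, but not obviously equivalent to, $M_2(\mathrm{r})=0$; so even that route requires a further argument before one may conclude PAFYBE.
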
 
	\begin{thm}
		Consider a symmetric and non-degenerate element $\mathrm{r}\in A\otimes A$.
		Then $\mathrm{r}$ is a solution of the PAFYBE in $A$ if and only if 
		the inverse homomorhpism $A^*\rightarrow A$ induced by $\mathrm{r}$ 
		regarded as a bilinear form $\mathfrak{B}$ on $A$
		(i.e. $\mathfrak{B}(x, y)= \langle \mathrm{r}^{-1}(x), y\rangle$, 
		for any $x,y\in A$) and satisfies 
		\begin{equation}\label{eq:2-cocycle}
			\mathfrak{B}(x\cdot y, z)=\mathfrak{B}(y, z\prec x)+
			\mathfrak{B}(x, y\succ z), \mbox{ for any } x,y,z\in A.
		\end{equation}
	\end{thm}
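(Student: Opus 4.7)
The idea is to pair the tensor equation \eqref{eq:AFPYBE} with an arbitrary element $u\otimes v\otimes w \in (A^*)^{\otimes 3}$ and translate each of the three terms into a pairing under $\mathfrak{B}$; since $\mathrm{r}:A^*\to A$ is a bijection, the scalar identity parametrised by $u,v,w$ is equivalent to the original tensor identity, and is readily recognised as the cocycle condition \eqref{eq:2-cocycle} after relabelling.

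First record the two available forms of the map $\mathrm{r}:A^*\to A$. Writing $\mathrm{r}=\sum_i a_i\otimes b_i$, the definition of $\mathrm{r}$ as a linear map gives $\mathrm{r}(u)=\sum_i\langle a_i,u\rangle\,b_i$, and the symmetry hypothesis $\sigma\mathrm{r}=\mathrm{r}$ adds $\mathrm{r}(u)=\sum_i\langle b_i,u\rangle\,a_i$. The same symmetry yields the symmetry $\mathfrak{B}(x,y)=\mathfrak{B}(y,x)$ of the induced form. For $u,v,w\in A^*$ set $U=\mathrm{r}(u)$, $V=\mathrm{r}(v)$, $W=\mathrm{r}(w)$, so that $\langle u,X\rangle=\mathfrak{B}(U,X)$ for every $X\in A$, and analogously for $v,V$ and $w,W$.

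Next evaluate each term of \eqref{eq:AFPYBE} on $u\otimes v\otimes w$ by collapsing the double sums one index at a time, using linearity of $\prec,\succ,\cdot$ together with the two expansions of $\mathrm{r}$ above. A direct computation gives
\begin{align*}
\langle \mathrm{r}_{_{23}}\cdot \mathrm{r}_{_{12}},\; u\otimes v\otimes w\rangle &= \mathfrak{B}(V,\,W\cdot U),\\
\langle \mathrm{r}_{_{12}}\prec \mathrm{r}_{_{13}},\; u\otimes v\otimes w\rangle &= \mathfrak{B}(U,\,V\prec W),\\
\langle \mathrm{r}_{_{13}}\succ \mathrm{r}_{_{23}},\; u\otimes v\otimes w\rangle &= \mathfrak{B}(W,\,U\succ V).
\end{align*}
Hence, by non-degeneracy of $\mathrm{r}$, equation \eqref{eq:AFPYBE} is equivalent to
\[
\mathfrak{B}(V,\,W\cdot U)=\mathfrak{B}(U,\,V\prec W)+\mathfrak{B}(W,\,U\succ V)\qquad \text{for all }U,V,W\in A.
\]
Renaming $W\mapsto x$, $U\mapsto y$, $V\mapsto z$ and applying symmetry of $\mathfrak{B}$ to the left-hand side produces precisely \eqref{eq:2-cocycle}.

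The only delicate point is the bookkeeping in the three collapse steps: symmetry of $\mathrm{r}$ is used exactly in those collapses where the contracted index sits in the $a$-slot on one factor of $\mathrm{r}$ and in the $b$-slot on another (once for the first term, twice for the second, not at all for the third), and it is likewise what allows the final symmetrisation of $\mathfrak{B}$ converting $\mathfrak{B}(V,W\cdot U)$ into the left-slot form of \eqref{eq:2-cocycle}. Beyond this, no extra ingredient is needed; both directions of the equivalence are immediate once the three pairings have been computed.
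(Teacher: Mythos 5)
Your proposal is correct, and it is worth noting that the paper itself gives no proof of this theorem: immediately before this block of statements the author writes that the proofs are omitted because they are ``too similar to the case of dendriform algebra and related $\mathcal{D}$-equation.'' Your computation supplies exactly the argument that is being waved at. I checked the three contractions against the paper's conventions for $\mathrm{r}_{ij}$ and the products between them: with $\mathrm{r}=\sum_i a_i\otimes b_i$ one gets $\mathrm{r}_{23}\cdot\mathrm{r}_{12}=\sum_{i,j}a_j\otimes(a_i\cdot b_j)\otimes b_i$, $\mathrm{r}_{12}\prec\mathrm{r}_{13}=\sum_{i,j}(a_i\prec a_j)\otimes b_i\otimes b_j$ and $\mathrm{r}_{13}\succ\mathrm{r}_{23}=\sum_{i,j}a_i\otimes a_j\otimes(b_i\succ b_j)$, and pairing with $u\otimes v\otimes w$ and collapsing via $\mathrm{r}(u)=\sum_i\langle a_i,u\rangle b_i=\sum_i\langle b_i,u\rangle a_i$ yields precisely your three identities $\mathfrak{B}(V,W\cdot U)$, $\mathfrak{B}(U,V\prec W)$, $\mathfrak{B}(W,U\succ V)$; your tally of where symmetry of $\mathrm{r}$ enters (once, twice, zero times) is also accurate, as is the use of non-degeneracy to let $U,V,W$ range over all of $A$ and the final appeal to the symmetry of $\mathfrak{B}$ to move $x\cdot y$ into the left slot. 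The only cosmetic caveat is that the theorem as printed is grammatically incomplete (``regarded as a bilinear form \ldots and satisfies''), but your reading of the intended equivalence is the right one.
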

	\begin{cor}
		Let $\mathrm{r}\in A\otimes A$ be a symmetric solution of PAFYBE in $A$. 
		Suppose in addition 
		by "$\prec_{_{  A^*}}, \succ_{_{ A^*}}$" the pre-anti-flexible algebra structure on $A^*$
		induced by $r$ via Proposition~\ref{pro_pre_anti_flexible dual}. Then we have for 
		any $a,b\in A^*$
		\begin{equation*}
			a\prec_{_{  A^*}} b=\mathrm{r}^{-1}(\mathrm{r}(a)\prec_{_  A} \mathrm{r}(b)), \;
			a\succ_{_{ A^*}} b=\mathrm{r}^{-1}(\mathrm{r}(a)\succ_{_  A} \mathrm{r}(b)).
		\end{equation*}
		Therefore, $\mathrm{r}:A^*\rightarrow A$ is an isomorphism of pre-anti-flexible algebras.
	\end{cor}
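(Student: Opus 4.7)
The plan is to verify the two claimed identities by pairing against an arbitrary element of $A^*$ and exploiting the symmetry and non-degeneracy of $\mathrm{r}$, viewed simultaneously as an element of $A\otimes A$ and as a linear isomorphism $\mathrm{r}:A^{*}\to A$. Symmetry of $\mathrm{r}$ translates to $\langle \mathrm{r}(u^{*}),v^{*}\rangle=\langle \mathrm{r}(v^{*}),u^{*}\rangle$ for all $u^{*},v^{*}\in A^{*}$, so for any $a,b,c\in A^{*}$ one has
\begin{equation*}
\langle \mathrm{r}(a\prec_{_{A^{*}}} b),c\rangle=\langle a\prec_{_{A^{*}}} b,\mathrm{r}(c)\rangle,\qquad \langle \mathrm{r}(a\succ_{_{A^{*}}} b),c\rangle=\langle a\succ_{_{A^{*}}} b,\mathrm{r}(c)\rangle.
\end{equation*}
By the non-degeneracy of $\mathrm{r}$, it suffices to rewrite the right-hand sides as $\langle \mathrm{r}(a)\prec_{_{A}}\mathrm{r}(b),c\rangle$ and $\langle \mathrm{r}(a)\succ_{_{A}}\mathrm{r}(b),c\rangle$, respectively.

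For the $\prec$-identity, I would expand $a\prec_{_{A^{*}}} b=-R_{_{\succ}}^{*}(\mathrm{r}(a))b+L_{\cdot}^{*}(\mathrm{r}(b))a$ from Proposition~\ref{pro_pre_anti_flexible dual} and pair with $\mathrm{r}(c)$, producing
\begin{equation*}
\langle a\prec_{_{A^{*}}} b,\mathrm{r}(c)\rangle=\langle a,\mathrm{r}(b)\cdot \mathrm{r}(c)\rangle-\langle b,\mathrm{r}(c)\succ \mathrm{r}(a)\rangle.
\end{equation*}
Using the preceding theorem, the bilinear form $\mathfrak{B}(x,y)=\langle \mathrm{r}^{-1}(x),y\rangle$ satisfies Eq.~\eqref{eq:2-cocycle}, and symmetry of $\mathrm{r}$ gives $\mathfrak{B}(\mathrm{r}(u^{*}),y)=\langle u^{*},y\rangle$. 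Substituting $x=\mathrm{r}(b)$, $y=\mathrm{r}(c)$, $z=\mathrm{r}(a)$ into Eq.~\eqref{eq:2-cocycle} rewrites the right-hand side exactly as $\langle c,\mathrm{r}(a)\prec_{_{A}}\mathrm{r}(b)\rangle$, yielding the first identity. The $\succ$-identity is obtained in parallel by substituting $x=\mathrm{r}(c)$, $y=\mathrm{r}(a)$, $z=\mathrm{r}(b)$.

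With the two intertwining identities in hand, bijectivity of $\mathrm{r}:A^{*}\to A$ (from non-degeneracy) promotes $\mathrm{r}$ to an isomorphism of pre-anti-flexible algebras. The only subtle point I anticipate is keeping track of the symmetric-form manipulation $\mathfrak{B}(\mathrm{r}(u^{*}),y)=\mathfrak{B}(y,\mathrm{r}(u^{*}))=\langle u^{*},y\rangle$ consistently across the three arguments of Eq.~\eqref{eq:2-cocycle}; once that bookkeeping is straight, the identification is automatic. No new estimate or cohomological input is needed beyond Proposition~\ref{pro_pre_anti_flexible dual} and the cocycle-type relation \eqref{eq:2-cocycle}, so the argument is essentially an exercise in dualizing through $\mathrm{r}$.
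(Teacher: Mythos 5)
Your argument is correct and is exactly the intended one: the paper omits this proof (appealing to the analogous statement for dendriform algebras and the $\mathcal{D}$-equation), and the standard route is precisely what you do — pair $\mathrm{r}(a\prec_{_{A^*}}b)$ and $\mathrm{r}(a\succ_{_{A^*}}b)$ against an arbitrary $c\in A^*$, expand via Eq.~\eqref{eq:pre-anti-flexible-dual}, and absorb the two resulting terms using the symmetry of $\mathfrak{B}$ together with the cocycle identity \eqref{eq:2-cocycle} under the substitutions you indicate. Your bookkeeping checks out (both substitutions produce exactly $\mathfrak{B}(\mathrm{r}(c),\mathrm{r}(a)\prec_{_A}\mathrm{r}(b))$ and $\mathfrak{B}(\mathrm{r}(c),\mathrm{r}(a)\succ_{_A}\mathrm{r}(b))$ respectively), and the isomorphism claim then follows from bijectivity of $\mathrm{r}$, which is implicitly assumed since the statement involves $\mathrm{r}^{-1}$.
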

	\begin{thm}
		Let $(A, \prec, \succ)$ be a pre-anti-flexible algebra and
		$\mathrm{r}\in A\otimes A$ symmetric.
		Then, $\mathrm{r}$ is a solution of {PAFYBE} if and only if its satisfies 
		\begin{equation*}
			\mathrm{r}(a)\cdot\mathrm{r}(b)=\mathrm{r}(R^*_{_{\prec}}(\mathrm{r}(a))b+
			L^*_{_{\succ}}(\mathrm{r}(b))a ), \;\forall a,b \in A^*.
		\end{equation*}
	\end{thm}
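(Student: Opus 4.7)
The plan is to expand both sides of the PAFYBE \eqref{eq:AFPYBE} in coordinates, contract with arbitrary $b,a\in A^*$ placed in the outer tensor slots, and recognize the resulting $A$-valued identity as precisely the stated relation. Writing $\mathrm{r}=\sum_i u_i\otimes v_i$, the symmetry hypothesis $\mathrm{r}=\sigma\mathrm{r}$ gives, for every $a\in A^*$,
\begin{equation*}
\mathrm{r}(a)=\sum_i\langle u_i,a\rangle\, v_i=\sum_i\langle v_i,a\rangle\, u_i,
\end{equation*}
and \eqref{eq:AFPYBE} reads, as an identity in $A\otimes A\otimes A$,
\begin{equation*}
\sum_{i,k} u_i\otimes(u_k\cdot v_i)\otimes v_k \;=\; \sum_{i,j}(u_i\prec u_j)\otimes v_i\otimes v_j \;+\; \sum_{j,k}u_j\otimes u_k\otimes(v_j\succ v_k).
\end{equation*}

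I then apply the scalar contraction $\langle b,\cdot\rangle\otimes\id\otimes\langle a,\cdot\rangle$ to both sides. The left-hand side reduces to $\sum_{i,k}\langle b,u_i\rangle\langle a,v_k\rangle\,(u_k\cdot v_i)=\mathrm{r}(a)\cdot\mathrm{r}(b)$, once both representations of $\mathrm{r}(a)$ and $\mathrm{r}(b)$ afforded by symmetry are used. For the first right-hand summand, using $\langle b,u_i\prec u_j\rangle=\langle R^*_{_\prec}(u_j)b,u_i\rangle$ and pulling the sum over $j$ through the linearity of $R^*_{_\prec}$, one finds
\begin{equation*}
\sum_{i,j}\langle b,u_i\prec u_j\rangle\langle a,v_j\rangle\,v_i=\sum_j\langle a,v_j\rangle\,\mathrm{r}(R^*_{_\prec}(u_j)b)=\mathrm{r}\bigl(R^*_{_\prec}(\mathrm{r}(a))b\bigr).
\end{equation*}
Similarly, using $\langle a,v_j\succ v_k\rangle=\langle L^*_{_\succ}(v_j)a,v_k\rangle$, the second summand contracts to $\mathrm{r}\bigl(L^*_{_\succ}(\mathrm{r}(b))a\bigr)$. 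Summing these gives the stated identity, establishing the forward implication.

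For the converse, the family of contractions $\Phi_{a,b}:\xi\mapsto(b\otimes\id\otimes a)(\xi)$, with $a,b\in A^*$, separates points of $A\otimes A\otimes A$ in the finite-dimensional setting, so two tensors agree if and only if all such pairings agree. Hence if the displayed identity holds for every $a,b\in A^*$, then \eqref{eq:AFPYBE} already holds in $A\otimes A\otimes A$, i.e.\ $\mathrm{r}$ is a solution of the PAFYBE. The only mild obstacle is bookkeeping: each of the three summands of \eqref{eq:AFPYBE} involves a different pair of ``shared slots'', so the three contractions must be computed separately, and the symmetry $\mathrm{r}=\sigma\mathrm{r}$ has to be invoked at exactly the right step to pass between its two representations of $\mathrm{r}(a)$ and $\mathrm{r}(b)$ when recognizing the operators $R^*_{_\prec}(\mathrm{r}(a))$ and $L^*_{_\succ}(\mathrm{r}(b))$.
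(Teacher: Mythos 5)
Your proof is correct. The paper itself omits the proof of this theorem, stating only that it is ``too similar to the case of dendriform algebra and related $\mathcal{D}$-equation,'' so there is no argument in the text to compare against; your direct contraction computation is precisely the standard argument that fills this gap. Writing $\mathrm{r}=\sum_i u_i\otimes v_i$, your coordinate expansions of $\mathrm{r}_{23}\cdot\mathrm{r}_{12}$, $\mathrm{r}_{12}\prec\mathrm{r}_{13}$ and $\mathrm{r}_{13}\succ\mathrm{r}_{23}$ agree with the paper's conventions for products of $\mathrm{r}_{ij}$'s, the identifications $\langle b,u_i\prec u_j\rangle=\langle R^*_{\prec}(u_j)b,u_i\rangle$ and $\langle a,v_j\succ v_k\rangle=\langle L^*_{\succ}(v_j)a,v_k\rangle$ match the paper's definition of dual maps, and the converse is correctly reduced to the fact that the contractions $\langle b,\cdot\rangle\otimes\id\otimes\langle a,\cdot\rangle$ separate points of $A\otimes A\otimes A$ in finite dimension. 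Two cosmetic remarks: on the left-hand side only the factor recovered as $\mathrm{r}(a)=\sum_k\langle v_k,a\rangle u_k$ actually requires the symmetry $\mathrm{r}=\sigma\mathrm{r}$ (the other factor uses the standard representation $\mathrm{r}(b)=\sum_i\langle u_i,b\rangle v_i$), so the phrase ``both representations'' slightly overstates what is used; and it would be worth one explicit sentence noting that $x\mapsto R^*_{\prec}(x)$ and $x\mapsto L^*_{\succ}(x)$ are linear, since that is what lets you pull the sums $\sum_j\langle a,v_j\rangle u_j=\mathrm{r}(a)$ and $\sum_j\langle b,u_j\rangle v_j=\mathrm{r}(b)$ inside the operators. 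Neither point affects the validity of the argument.
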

	Recall that a  $\mathcal{O}$-operator related to the bimodule $(l, r, V )$ 
	of an anti-flexible algebra $(A, \cdot)$ is a linear map $T :V\rightarrow A$ satisfies
	\begin{equation*}
		T (u) \cdot T (v) = T (l(T (u))v + r(T (v))u), \; \forall u, v \in  V.
	\end{equation*}
	In addition, for a given pre-anti-flexible algebra $(A, \prec, \succ)$, according 
	to Proposition~\ref{prop_operation_bimodule_pre_anti_flexible}~\eqref{eq:one},
	$(L_{_{\succ}},R_{_{\prec}},  A)$ is a bimodule of 
	its underlying anti-flexible algebra $aF(A)$. Furthermore, for any $x,y\in A$
	\begin{eqnarray}\label{eq:o-operator}
		\id(x)\cdot \id(y)= \id (L_{_{\succ}}(\id(x))y+R_{_{\prec}}(\id(y))x),
	\end{eqnarray}
	then $\id:A\rightarrow A$ is an $\mathcal{O}$-operator of $aF(A)$ associated to
	the bimodule $(L_{_{\succ}}, R_{_{\prec}}, A)$.
	\begin{cor}
		Consider a symmetric element $\mathrm{r}\in A\otimes A$.
		Then $\mathrm{r}$ is a solution
		of PAFYBE in $A$ if and only if it is an $\mathcal{O}$-operator
		of the underlying anti-flexible $aF(A)$ associated to the bimodule
		$(R^*_{_{\prec}}, L^*_{_{\succ}}, A^*)$. Furthermore, there is a pre-anti-flexible 
		algebra structure on $A^*$ given by
		\begin{eqnarray*}
			a\prec b=L^*_{_{\succ}}(\mathrm{r}(b))a;\;\;
			a\succ b=R^*_{_{\prec}}(\mathrm{r}(a))b;\; \forall a,b\in A^*,
		\end{eqnarray*}
		which is the same of that associated to the pre-anti-flexible bialgebra derived on $A^*$
		by Eq~\eqref{eq:pre-anti-flexible-dual}. If in addition $\mathrm{r}$ is 
		non degenerate, then there is a new compatible pre-anti-flexible algebraic
		structure given on $A$ by 
		\begin{eqnarray*}
			x\prec'y=\mathrm{r}(L^*_{_{\succ}}(y)\mathrm{r}^{-1}(x)),\;
			x\succ'y=\mathrm{r}(R^*_{_{\prec}}(x)\mathrm{r}^{-1}(y)), \;\forall x,y\in A,
		\end{eqnarray*}
		which is the pre-anti-flexible algebra structure given by
		\begin{eqnarray*}
			\mathfrak{B}(x\prec'y, z)=\mathfrak{B}(x, y\ast z), \;
			\mathfrak{B}(x\succ'y, z)=\mathfrak{B}(y, z\cdot x), \;\forall x,y,z\in A,
		\end{eqnarray*}
		where  $\mathfrak{B}$ is given by $
		\mathfrak{B}(x,y)=\langle \mathrm{r}^{-1}(x), y\rangle$ for
		any $x,y\in A$ and satisfies Eq.~\eqref{eq:2-cocycle}.
	\end{cor}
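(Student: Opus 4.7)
The plan is to verify each of the three assertions in turn, using the theorem immediately preceding this corollary together with Proposition~\ref{pro_pre_anti_flexible dual}. For the first assertion, I would observe that Proposition~\ref{prop_operation_bimodule_pre_anti_flexible} already gives $(R^*_{_{\prec}}, L^*_{_{\succ}}, A^*)$ as a bimodule of the underlying anti-flexible algebra $aF(A)$, so the $\mathcal{O}$-operator equation for $\mathrm{r}: A^* \to A$ associated to this bimodule unwinds to
\[
\mathrm{r}(a) \cdot \mathrm{r}(b) = \mathrm{r}\bigl(R^*_{_{\prec}}(\mathrm{r}(a))b + L^*_{_{\succ}}(\mathrm{r}(b))a\bigr), \quad \forall\, a,b \in A^*,
\]
which is precisely the condition on a symmetric $\mathrm{r}$ that the preceding theorem established as equivalent to PAFYBE. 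Thus the first equivalence is immediate.

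For the second assertion, I would compare the simpler formulas $a \prec b = L^*_{_{\succ}}(\mathrm{r}(b))a$ and $a \succ b = R^*_{_{\prec}}(\mathrm{r}(a))b$ with those of Proposition~\ref{pro_pre_anti_flexible dual}. Their sum matches on the nose the expression for $a \cdot b$ in \eqref{eq:pre-anti-flexible-dual}, so the underlying anti-flexible structures coincide automatically. Matching the individual products reduces, after pairing against an arbitrary $x \in A$ and using the symmetry $\langle \mathrm{r}(u), v\rangle = \langle \mathrm{r}(v), u\rangle$, to the single identity $L^*_{_{\prec}}(\mathrm{r}(b))a = R^*_{_{\succ}}(\mathrm{r}(a))b$, which I expect to fall out by dualizing the remaining relations of Proposition~\ref{pro_pre_anti_flexible dual}.

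For the non-degenerate case, the natural approach is to transport the pre-anti-flexible structure from $A^*$ to $A$ along the isomorphism $\mathrm{r}: A^* \to A$ supplied by the preceding corollary. Setting $a = \mathrm{r}^{-1}(x)$ and $b = \mathrm{r}^{-1}(y)$ in the formulas on $A^*$ and applying $\mathrm{r}$ yields exactly the stated definitions of $\prec'$ and $\succ'$, after which the pre-anti-flexible axioms on $(A, \prec', \succ')$ are automatic from those on $(A^*, \prec, \succ)$. The final characterization via $\mathfrak{B}$ is then a one-line pairing calculation:
\[
\mathfrak{B}(x \prec' y, z) = \langle L^*_{_{\succ}}(y)\mathrm{r}^{-1}(x), z\rangle = \langle \mathrm{r}^{-1}(x), y \succ z\rangle,
\]
and symmetrically for $\succ'$; Eq.~\eqref{eq:2-cocycle} is then inherited directly from the previously established cocycle property.

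The principal obstacle, as I see it, lies in the second assertion: verifying that the sums of the two candidate products on $A^*$ coincide is trivial, but matching the $\prec$ and $\succ$ components separately requires genuine use of the symmetry of $\mathrm{r}$ rather than of PAFYBE alone. The delicate step is therefore pinning down $L^*_{_{\prec}}(\mathrm{r}(b))a = R^*_{_{\succ}}(\mathrm{r}(a))b$ and tracking which among the auxiliary formulas of Proposition~\ref{pro_pre_anti_flexible dual} forces it; the rest of the argument is bookkeeping.
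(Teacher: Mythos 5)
Your outline of the first and third assertions is essentially right: the $\mathcal{O}$-operator equivalence is just a restatement of the theorem immediately preceding the corollary, and transporting the structure along the isomorphism $\mathrm{r}\colon A^*\to A$ is the correct way to obtain $\prec',\succ'$. The genuine gap is exactly at the step you defer. Matching the individual products does reduce to $L^*_{\prec}(\mathrm{r}(b))a=R^*_{\succ}(\mathrm{r}(a))b$, but pairing this against $x\in A$ and using only the symmetry of $\mathrm{r}$ converts it into the \emph{linear} invariance condition $(\id\otimes L_{\succ}(x))\mathrm{r}=(R_{\prec}(x)\otimes\id)\mathrm{r}$ for all $x\in A$, and this is not a consequence of the quadratic equation \eqref{eq:AFPYBE}, nor of the remaining relations of Proposition~\ref{pro_pre_anti_flexible dual}. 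Concretely, on the one-dimensional pre-anti-flexible algebra $A=\mathbb{F}e$ with $e\prec e=0$ and $e\succ e=e$, every symmetric $\mathrm{r}=\lambda\,e\otimes e$ solves \eqref{eq:AFPYBE}, yet $(\id\otimes L_{\succ}(e))\mathrm{r}=\lambda\,e\otimes e$ while $(R_{\prec}(e)\otimes\id)\mathrm{r}=0$; correspondingly the bialgebra structure of \eqref{eq:pre-anti-flexible-dual} gives $e^*\succ e^*=\lambda e^*$, $e^*\prec e^*=0$, whereas the $\mathcal{O}$-operator formulas give $e^*\succ e^*=0$, $e^*\prec e^*=\lambda e^*$. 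Only the sums (the underlying anti-flexible products) agree. So the identity you ``expect to fall out'' cannot be extracted by dualizing anything; the second assertion either has to be read as agreement of the underlying anti-flexible structures only, or it requires an argument (and perhaps a hypothesis) that your plan does not supply. Since the paper omits the proof by appeal to the dendriform analogue, this is a point you must settle yourself rather than cite.

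A second, smaller problem sits in your last step: your own displayed pairing computation yields $\mathfrak{B}(x\prec'y,z)=\langle\mathrm{r}^{-1}(x),y\succ z\rangle=\mathfrak{B}(x,y\succ z)$, and symmetrically $\mathfrak{B}(x\succ'y,z)=\mathfrak{B}(y,z\prec x)$, with the \emph{partial} products on the right, not the full products $y\ast z$ and $z\cdot x$ asserted in the statement. These do not coincide unless $\mathfrak{B}(x,y\prec z)=0$ identically. What your computation legitimately establishes is the pair of identities with $\succ$ and $\prec$ on the right, whose sum (together with $x\prec'y+x\succ'y=x\cdot y$, which does follow from the $\mathcal{O}$-operator property) gives \eqref{eq:2-cocycle}; you should state that explicitly instead of claiming the version with the full products.
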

	Taking into account \cite[Proposition 2.7.]{DBH3} we have
	\begin{thm}
		Let $(A, \cdot)$ be an anti-flexible algebra, $(l,r, V)$  a bimodule
		of $(A, \cdot)$ and $T:V\rightarrow A$ an $\mathcal{O}$-operator associated 
		to $(l,r, V)$. Then $\mathrm{r}=T+\sigma T$ is a symmetric solution of 
		the PAFYBE in $T(V)\ltimes_{r^*, 0,0, l^*} V^*$, where
		$T(V)\subset A$ is endowed with  a pre-anti-flexible given by 
		for any $u,v\in V, $
		\begin{eqnarray*}
			T(v)\prec T(v)=T(r(T(v))u), \; 
			T(u)\succ T(v)=T(l(T(u))v),
		\end{eqnarray*} 
		such that $(r^*, 0,0, l^*, T(V)^*)$ is its associated bimodule 
		and underlying anti-flexible algebra is a sub-algebra of $A$, and finally
		$T$ can be identified with an element in 
		$T(V)\otimes V^*\subset(T(V)\ltimes_{r^*, 0,0, l^*} V^*)\otimes 
		T(V)\ltimes_{r^*, 0,0, l^*} V^*$.
	\end{thm}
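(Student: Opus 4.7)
The plan is to verify the several assertions packaged in the statement in sequence, reducing each to the $\mathcal{O}$-operator identity $T(u)\cdot T(v)=T(l(T(u))v+r(T(v))u)$ together with the bimodule axioms for $(l,r,V)$.

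First, I would check that $T(V)\subset A$ is a subalgebra of $(A,\cdot)$: this is immediate from the $\mathcal{O}$-operator identity, which writes $T(u)\cdot T(v)$ as a value of $T$. Next, I would verify that the two operations
\begin{eqnarray*}
T(u)\prec T(v):=T(r(T(v))u),\qquad T(u)\succ T(v):=T(l(T(u))v)
\end{eqnarray*}
endow $T(V)$ with a pre-anti-flexible algebra structure whose associated $\cdot$-product coincides with the restriction of that of $A$. The compatibility of the underlying anti-flexible algebra with that of $A$ follows at once from the $\mathcal{O}$-operator identity, while Eqs.~\eqref{eq:pre-antiflexible1}--\eqref{eq:pre-antiflexible2} reduce, after a direct substitution, to the bimodule identities~\eqref{eqbimodule1}--\eqref{eqbimodule2} for $(l,r,V)$ (an ambiguity in the statement $T(v)\prec T(v)=T(r(T(v))u)$ is a typo and should read $T(u)\prec T(v)$; I would note this and proceed with the intended formula).

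Third, I would show that $(r^{*},0,0,l^{*},V^{*})$ is a bimodule of $(T(V),\prec,\succ)$. The conditions Eqs.~\eqref{eq_bimodule_pre_anti_flexible1}--\eqref{eq_bimodule_pre_anti_flexible5} specialize, for this vanishing pair of components, to the duals of the bimodule relations for $(l,r,V)$; dualizing via $\langle \varphi^{*}(x)u^{*},v\rangle=\langle u^{*},\varphi(x)v\rangle$ converts them into exactly the axioms we already know. This then produces, by Theorem~\ref{thm_matchedpair}, the semidirect product pre-anti-flexible algebra $T(V)\ltimes_{r^{*},0,0,l^{*}} V^{*}$, whose underlying anti-flexible algebra is the one handed to us by Theorem~\ref{thm_2}.

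Finally, I would identify $T\in\mathrm{Hom}(V,T(V))$ with a tensor in $T(V)\otimes V^{*}$ via the canonical isomorphism, so that $T=\sum_{i}T(e_{i})\otimes e_{i}^{*}$ for a basis $(e_{i})$ of $V$. Symmetry of $\mathrm{r}=T+\sigma T$ is then tautological. The main obstacle, and the heart of the proof, is checking that this $\mathrm{r}$ satisfies the PAFYBE~\eqref{eq:AFPYBE} in $T(V)\ltimes V^{*}$, i.e.\ $\mathrm{r}_{23}\cdot\mathrm{r}_{12}=\mathrm{r}_{12}\prec\mathrm{r}_{13}+\mathrm{r}_{13}\succ\mathrm{r}_{23}$. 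I would expand both sides in the basis $\{T(e_{i})\otimes e_{j}^{*}\}$, using the explicit formulas from Eq.~\eqref{eq_pre_anti_matched} for the mixed products (which for $(r^{*},0,0,l^{*})$ take a particularly simple form), and collect terms. The four contributions that arise from crossing a $T(V)$-factor with a $V^{*}$-factor cancel thanks to the definition of $r^{*}$ and $l^{*}$, and the two remaining pure $T(V)\otimes T(V)\otimes T(V)$ pieces combine, via the $\mathcal{O}$-operator identity $T(u)\cdot T(v)=T(l(T(u))v+r(T(v))u)$, into a common expression, yielding the desired equality. This is the analogue, for pre-anti-flexible algebras, of the standard derivation in the dendriform setting alluded to by the authors before the statement; the only non-routine point is keeping track of which slots carry $\prec$, $\succ$ and $\cdot$, which is why I would organize the calculation by first writing out the three tensor legs of each term before comparing.
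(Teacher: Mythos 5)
The paper gives no proof of this theorem at all: the authors state just before Section~5's results that the proofs are omitted as ``too similar to the case of dendriform algebra and related $\mathcal{D}$-equation,'' and the theorem itself is introduced only by a reference to \cite[Proposition 2.7]{DBH3}. Your plan is the standard argument that the authors are implicitly invoking, and its architecture is sound: the pre-anti-flexible axioms \eqref{eq:pre-antiflexible1}--\eqref{eq:pre-antiflexible2} on $T(V)$ do reduce, via $T(u)\cdot T(v)=T(l(T(u))v+r(T(v))u)$, exactly to \eqref{eqbimodule1}--\eqref{eqbimodule2}; the quintuple $(r^*,0,0,l^*,V^*)$ satisfies \eqref{eq_bimodule_pre_anti_flexible1}--\eqref{eq_bimodule_pre_anti_flexible5} because three of the five conditions become $0=0$ and the other two dualize to \eqref{eqbimodule2} and \eqref{eqbimodule1}; and writing $T=\sum_i T(e_i)\otimes e_i^*$ makes symmetry of $\mathrm{r}=T+\sigma T$ tautological.

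One correction to your description of the final computation: no ``pure $T(V)\otimes T(V)\otimes T(V)$ pieces'' ever arise. Since every summand of $\mathrm{r}$ has one leg in $T(V)$ and one in $V^*$, and since all products of two $V^*$-elements vanish in $T(V)\ltimes_{r^*,0,0,l^*}V^*$, every surviving term of $\mathrm{r}_{23}\cdot\mathrm{r}_{12}$, $\mathrm{r}_{12}\prec\mathrm{r}_{13}$ and $\mathrm{r}_{13}\succ\mathrm{r}_{23}$ has exactly one $T(V)$-slot and two $V^*$-slots. The verification proceeds by matching terms according to which slot carries the $T(V)$ leg: the first-slot and third-slot terms match by the duality identities $T(r(T(e_j))e_i)=\sum_k\langle r^*(T(e_j))e_k^*,e_i\rangle T(e_k)$ and its $l$-analogue, while the middle-slot terms are where the $\mathcal{O}$-operator identity is actually used, splitting $T(e_i)\cdot T(e_j)=T(l(T(e_i))e_j)+T(r(T(e_j))e_i)$ into the two dual-action contributions on the right-hand side. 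Two further small points you should record explicitly: the formulas $T(u)\prec T(v):=T(r(T(v))u)$ require a well-definedness remark when $T$ is not injective (the paper is silent on this too), and the ``$T(V)^*$'' in the statement of the theorem should be $V^*$ for consistency with the identification $T\in T(V)\otimes V^*$ -- you correctly work with $V^*$ throughout.
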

	Considering the above theorem,
	Proposition~\ref{prop_operation_bimodule_pre_anti_flexible}~\eqref{eq:one} and 
	Eq.~\eqref{eq:o-operator}, we have
	\begin{cor}
		Let $(A, \prec, \succ)$ be a $n$-dimensional pre-anti-flexible algebra. Then the element 
		\begin{eqnarray*}
			\mathrm{r}=\sum_{i}^{n} (e_i\otimes e_i^*+e_i^*\otimes e_i)
		\end{eqnarray*}
		is a symmetric solution of PAFYBE in 
		$A\ltimes_{R_{_{\prec}}^*, 0,0, L_{_{\succ}}^*} A^*$, 
		where $\{e_1, \cdots , e_n\}$ it a basis of $A$  and 
		$\{e^*_1, \cdots , e^*_n\}$  its associated dual basis.
		Furthermore, $\mathrm{r}$ is non degenerate and it 
		induced  bilinear form $\mathfrak{B}$ on
		$A\ltimes_{R_{_{\prec}}^*, 0,0, L_{_{\succ}}^*} A^*$
		is given by 
		\begin{eqnarray*}
			\mathfrak{B}(x+a, y+b)=\langle x, b\rangle+\langle y, a\rangle, \;
			\forall x,y\in A, a, b\in A^*.
		\end{eqnarray*} 
	\end{cor}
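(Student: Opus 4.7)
The plan is to apply the preceding theorem (which states that if $T:V\to A$ is an $\mathcal{O}$-operator of an anti-flexible algebra $(A,\cdot)$ associated with a bimodule $(l,r,V)$, then $\mathrm{r}=T+\sigma T$ is a symmetric solution of PAFYBE in $T(V)\ltimes_{r^*,0,0,l^*}V^*$) to the identity $\mathcal{O}$-operator. Indeed, by Proposition~\ref{prop_operation_bimodule_pre_anti_flexible}\eqref{eq:one}, $(L_{_\succ}, R_{_\prec}, A)$ is a bimodule of the underlying anti-flexible algebra $aF(A)$, and by Eq.~\eqref{eq:o-operator}, $\id_{A}:A\to A$ is an $\mathcal{O}$-operator of $aF(A)$ associated with this bimodule. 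Taking $V=A$, $T=\id$, $l=L_{_\succ}$, $r=R_{_\prec}$ in the cited theorem, we obtain a symmetric solution of PAFYBE living in $A\ltimes_{R^*_{_\prec},0,0,L^*_{_\succ}}A^*$.

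Next, I would identify $T=\id_{A}$ with its tensor expression. Since $\id_{A}\in \End(A)\cong A\otimes A^*$ is represented in the chosen basis by $\sum_{i=1}^{n}e_i\otimes e_i^*$, we have
\begin{equation*}
\mathrm{r}=T+\sigma T=\sum_{i=1}^{n}\bigl(e_i\otimes e_i^* + e_i^*\otimes e_i\bigr),
\end{equation*}
which matches the stated formula and is manifestly symmetric.

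For the second assertion, I would view $\mathrm{r}$ as a linear map $(A\oplus A^*)^*\cong A^*\oplus A\longrightarrow A\oplus A^*$. Given $y^*+b\in A^*\oplus A$ and test element $z^*+c\in A^*\oplus A$, a direct computation gives
\begin{equation*}
\langle \mathrm{r}(y^*+b),\,z^*+c\rangle
=\sum_{i}\bigl(\langle e_i,y^*\rangle\langle e_i^*,c\rangle+\langle e_i^*,b\rangle\langle e_i,z^*\rangle\bigr)
=\langle b,z^*\rangle+\langle y^*,c\rangle,
\end{equation*}
so that $\mathrm{r}(y^*+b)=b+y^*$. Hence $\mathrm{r}$ is precisely the ``swap'' isomorphism between $A^*\oplus A$ and $A\oplus A^*$, and in particular is non-degenerate with $\mathrm{r}^{-1}(x+a)=a+x$. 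Substituting into $\mathfrak{B}(u,v)=\langle \mathrm{r}^{-1}(u),v\rangle$ yields $\mathfrak{B}(x+a,y+b)=\langle a+x,y+b\rangle=\langle x,b\rangle+\langle y,a\rangle$, the stated formula.

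The only delicate point, and the step I would write out most carefully, is the bookkeeping of the dualities: one must keep track of which factor in $(A\oplus A^*)\otimes(A\oplus A^*)$ pairs with which factor of $(A\oplus A^*)^*\otimes(A\oplus A^*)^*$, and check that the sign/ordering conventions make $\mathrm{r}$ symmetric in the sense compatible with PAFYBE (rather than merely skew). Once this identification is set up correctly, everything else is a direct specialization of the previously proved theorem together with the standard trace formula for $\id_A\in A\otimes A^*$.
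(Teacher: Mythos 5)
Your proposal is correct and follows exactly the route the paper intends: specializing the preceding $\mathcal{O}$-operator theorem to $T=\id_A$ with the bimodule $(L_{_\succ},R_{_\prec},A)$ of $aF(A)$, identifying $\id_A$ with $\sum_i e_i\otimes e_i^*$, and verifying the induced bilinear form directly. The paper offers no further argument beyond citing these ingredients, so your write-up, including the explicit computation of $\mathfrak{B}$, is if anything more complete than the source.
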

	\bigskip
	\noindent
	{\bf Acknowledgments.}   
	The author thanks Professor C. Bai for helpful discussions and 
	his encouragement, and Nankai ZhiDe Foundation.

\end{document}